\theoremstyle{plain}
\newtheorem{thm}{Theorem}[section]
\newtheorem*{theorem*}{Theorem}
\newtheorem*{prop*}{Proposition}
\newtheorem{prop}[thm]{Proposition}
\newtheorem{lemma}[thm]{Lemma}
\newtheorem{cor}[thm]{Corollary}
\newtheoremstyle{underline}% name
{}        % Space above, empty = `usual value'
{}              % Space below
{}              % Body font
{}    % Indent amount (empty = no indent, \parindent = para indent)
{\large}              % Thm head font
{:}             % Punctuation after thm head
{1mm}         % Space after thm head: \newline = linebreak
{{\underline{\thmname{#1}\thmnumber{ #2}}}}  % Thm head spec
\theoremstyle{underline}
\newtheorem*{claim*}{Claim}
\theoremstyle{definition}
\newtheorem{defi}[thm]{Definition}
\theoremstyle{remark}
\newtheorem{remark}[thm]{Remark}
\newtheorem*{ack}{Acknowledgements}
\definecolor{forest}{rgb}{0,0.5,0}
\keywords{symplectic manifolds; Lagrangian submanifolds; deformation theory; moduli spaces, differential graded Lie algebras}
\subjclass[2020]{53D05 (Primary), %Symplectic manifolds (general theory)
	53D12, %Lagrangian submanifolds; Maslov index
	58H15, %Deformations of general structures on manifolds
	58D27, %Moduli problems for differential geometric structures
	17B70%Graded Lie (super)algebras
}
\begin{document}
	
	\title[Simultaneous deformations of symplectic forms and Lagrangians]{Simultaneous deformations of symplectic forms and Lagrangian submanifolds}

	\author{Stephane Geudens}
	\address{{\scriptsize Institute of Mathematics, Polish Academy of Sciences,  ul. Sniadeckich 8, 00-656 Warsaw, Poland}}
	\email{stephane\_geudens@hotmail.com}
	
	\author{Florian Sch\"{a}tz}
	\address{{\scriptsize KfW Westarkade, Palmengartenstraße 5-9, 60325 Frankfurt am Main, Germany}}
	\email{florian.schaetz@gmail.com}
	
	\author{Alfonso G. Tortorella}
	\address{{\scriptsize DipMat, Universit\`{a} degli Studi di Salerno, Via Giovanni Paolo II n.132, 84084  Fisciano, Italy}}
	\email{atortorella@unisa.it}
	
	%\author{Marco Zambon}
	%\address{{\scriptsize Department of Mathematics, KU Leuven, Celestijnenlaan 200B, 3001 Leuven, Belgium}}
	%\email{marco.zambon@kuleuven.be}
	
	\begin{abstract}
	Given a compact symplectic manifold $(M,\omega)$ and a compact Lagrangian submanifold $L\subset(M,\omega)$, we describe small deformations of the pair $(\omega,L)$ modulo the action by isotopies. We show that the resulting moduli space can be identified with an open neighborhood of the origin in the second relative de Rham cohomology group $H^{2}(M,L)$. This implies in particular that the moduli space is smooth and finite dimensional.
	\end{abstract}
	
	\maketitle
	
	\setcounter{tocdepth}{1} %doesn't display subsections in TOC 
	\tableofcontents
	
	\section*{Introduction}
	
	%start with a few sentences outlining the main aim of this paper:
	
	This paper represents the first part of a project which aims to describe the local moduli space of deformations of a pair $(\omega,N)$, where $\omega$ is a symplectic form on a fixed compact manifold $M$ and $N\subset(M,\omega)$ is a compact coisotropic submanifold.
	While the general coisotropic case will be addressed in a companion paper~\cite{ours}, here we focus on the special case where the submanifold is Lagrangian.
	Hence, given a compact symplectic manifold $(M,\omega)$ and a compact Lagrangian submanifold $L\subset(M,\omega)$, we aim to achieve the following.
	\vspace{0.1cm}
		\begin{quotation}
		\textbf{Main Goal:} Study the local moduli space of simultaneous deformations of the symplectic structure $\omega$ and the Lagrangian submanifold $L$, quotienting by isotopies of diffeomorphisms. More precisely, we will study pairs $(\omega',L')$ that are $(\mathcal{C}^0\times\mathcal{C}^1)$-close to $(\omega,L)$ modulo isotopies of $\mathcal{C}^1$-small diffeomorphisms.
	\end{quotation}
%	\end{mdframed}	
		\vspace{0.1cm}

	\textbf{Context.}
	%a small paragraph describing what problems are studied in deformation and moduli theory:
	The modern approach to deformation theory, which has emerged through the works of Nijenhuis--Richardson and was further developed by Schlessinger--Stasheff, Deligne, Goldman--Millson and Drinfeld among others, centers on the following principle: ``In characteristic zero, a deformation problem is controlled by a dgLa, with quasi-isomorphic dgLas giving the same deformation theory.''
	In the context of formal deformation theory and formal moduli spaces, starting with the works of Kontsevich, Hinich and Manetti, these ideas have been turned into a theorem by Lurie and Pridham establishing an equivalence between formal deformation problems and the homotopy category of dgLas.
	
	%deformation problems and moduli spaces in differential geometry
	
	When studying the deformation problem of a geometric structure $\mathcal{O}$, we say that a dgLa (or more generally an $L_\infty$-algebra) $\mathfrak{g}$ controls the deformation problem when small Maurer--Cartan (MC) elements of $\mathfrak{g}$ are in bijection with small deformations of the structure $\mathcal{O}$.
	Additionally, $\mathfrak{g}$ encodes the local moduli space if this bijection intertwines the gauge equivalence of MC elements and a certain equivalence by ``isomorphisms'' of deformations. Then the underlying dg-space controls infinitesimal deformations and the Lie bracket induced in cohomology detects obstructions.
	However, even though a quasi-isomorphism $\mathfrak{g}\rightarrow\mathfrak{h}$ determines an isomorphism $\operatorname{Def}_\mathfrak{g}\rightarrow\operatorname{Def}_\mathfrak{h}$ between the associated deformation functors (with $\mathfrak{g}$ and $\mathfrak{h}$ encoding the same formal moduli space), in general the induced map between the MC moduli spaces $\operatorname{MC}(\mathfrak{g})/\text{gauge}\rightarrow\operatorname{MC}(\mathfrak{h})/\text{gauge}$ is not a bijection.
	%\textcolor{olive}{Add a little bit more about formal moduli spaces vs moduli spaces}
	
	%deformation theory of Lagrangian submanifolds and symplectic structures individually
	
	\vspace{0.2cm}
	\textbf{Individual Deformations.}
	The individual deformation problems of a symplectic form and a Lagrangian submanifold are  controlled by abelian dgLas, i.e.~dg-spaces.
	First, the deformation problem of a compact Lagrangian submanifold $L$ in $(M,\omega)$ is controlled by its de Rham complex $(\Omega^\bullet(L),d)$.
	Indeed, by Weinstein's Lagrangian neighborhood theorem, we can identify $\mathcal{C}^1$-small deformations of $L$ with $\mathcal{C}^1$-small closed $1$-forms on $L$. Quotienting by Hamiltonian isotopies, the local moduli space of Lagrangian deformations gets parametrized by a neighborhood of $0$ in $H^1(L)$.
	Second, the deformation problem of a symplectic form $\omega$ on a given compact manifold $M$ is controlled by the shifted de Rham complex $(\Omega^\bullet(M)[1],d)$.
	Indeed, $\mathcal{C}^0$-small deformations of $\omega$ identify with $\mathcal{C}^0$-small closed $2$-forms on $M$ and Moser's stability theorem implies that, when quotienting by isotopies, the local moduli space of symplectic deformations is parametrized by a neighborhood of $0$ in $H^2(M)$.
	
	To study simultaneous deformations of $\omega$ and $L$, it is useful to look at deformations of $\omega$ from an alternative viewpoint inspired by Poisson geometry.
	First, the relation $\widetilde{\pi}=-\widetilde{\omega}^{-1}$ identifies symplectic structures $\widetilde{\omega}$ with those Poisson structures $\widetilde{\pi}$ that are non-degenerate (a $\mathcal{C}^{0}$-open condition on $\widetilde{\pi}$).
	Next, deformations of the Poisson structure $\pi=-\omega^{-1}$ are governed by the dgLa $(\Omega^\bullet(M)[1],d,[-,-]_\pi)$, where $[-,-]_\pi$ is the Koszul bracket associated with $\pi$.
	Consequently, the relation $-\widetilde\omega^{-1}=\pi-(\wedge^{2}\pi^\sharp)\eta$ identifies deformations $\widetilde{\omega}$ of $\omega$ with $\mathcal{C}^0$-small MC elements $\eta$ of $(\Omega^\bullet(M)[1],d,[-,-]_\pi)$ and it intertwines the equivalence of symplectic forms by isotopies with the gauge equivalence of $\mathcal{C}^0$-small MC elements.
	
	Now denote by $i_L\colon L\rightarrow M$ the inclusion and set $\Omega^\bullet(M,L):=\ker(i_L^\ast)$. Since $L\subset(M,\omega)$ is Lagrangian, we have that $\Omega^\bullet(M,L)$ is a sub-dgLa of $(\Omega^\bullet(M)[1],d,[-,-]_\pi)$.
	The induced dgLa-structure $(\Omega^\bullet(M,L)[1],d,[-,-]_\pi)$ satisfies the following.
	\begin{enumerate}
		\item It controls deformations of $\omega$ such that $L$ is Lagrangian, and encodes their moduli space with respect to isotopies of diffeomorphisms preserving $L$.
		\item It is homotopy abelian by a formality result due to Fiorenza-Manetti \cite{Fiorenza}.  That is, there exists an $L_\infty$-algebra isomorphism
		\begin{equation}
			\label{eq:intro:second-step}
			(\Omega^\bullet(M,L)[1],d,[-,-]_\pi)\longrightarrow(\Omega^\bullet(M,L)[1],d).
		\end{equation}
	\end{enumerate}
	
	\textbf{Strategy.}
	Our approach utilizes the dg-space $\operatorname{cone}(i_L^\ast)=(\Omega^\bullet(M)[1]\oplus\Omega^\bullet(L),d_{i_L^\ast})$, i.e.~the mapping cone of the cochain map $i_L^\ast\colon(\Omega^\bullet(M),d)\rightarrow(\Omega^\bullet(L),d)$. For the sake of explaining our strategy, we identify $M$ with $T^{*}L$ via the Lagrangian neighborhood theorem.
	We then get an abelian embedding of $\Omega^\bullet(L)[1]$ into $(\Omega^\bullet(M)[1],[-,-]_\pi)$ which is also a splitting of the following short exact sequence
	\begin{equation*}
		\label{eq:intro:first-ses}
		\begin{tikzcd}
			0\arrow[rr]&&\Omega^\bullet(M,L)\arrow[rr]&&\Omega^\bullet(M)\arrow[rr, "i_L^\ast"]&&\Omega^\bullet(L)\arrow[rr]\arrow[ll, bend left]&&0
		\end{tikzcd}.
	\end{equation*}
	Voronov's technique of higher derived brackets \cite{Voronov} yields an $L_\infty$-algebra structure $\{\ell_k\}_{k\in\mathbb{N}}$ enriching $\operatorname{cone}(i_L^\ast)$, i.e.~an $L_\infty$-algebra structure $\{\ell_k\}_{k\in\mathbb{N}}$ with $\ell_1=d_{i_L^\ast}$ which is also an $L_\infty$-algebra extension with basis $(\Omega^\bullet(M)[1],d,[-,-]_\pi)$ and fiber $(\Omega^\bullet(L),d)$:
	\begin{equation*}
		\begin{tikzcd}[column sep=0.7cm]
			0\arrow[r]&(\Omega^\bullet(L),d)\arrow[r]&(\Omega^\bullet(M)[1]\oplus\Omega^\bullet(L),\{\ell_k\}_{k\in\mathbb{N}})\arrow[r]&(\Omega^\bullet(M)[1],d,[-,-]_\pi)\arrow[r]&0
		\end{tikzcd}.
		%		0\rightarrow\Omega^\bullet(L)\rightarrow\operatorname{cone}(i_L^\ast)\rightarrow\Omega^\bullet(M)\rightarrow 0
	\end{equation*}
	By the approach to simultaneous deformation problems of Fregier--Zambon~\cite{Fregier}, the $L_\infty$-algebra $(\Omega^\bullet(M)[1]\oplus\Omega^\bullet(L),\{\ell_k\}_{k\in\mathbb{N}})$ satisfies the following.
	\begin{enumerate}
		\item It controls the formal simultaneous deformations of $\omega$ and $L$, and encodes their formal moduli space with respect to isotopies of diffeomorphisms.
		\item We have a strict quasi-isomorphism of $L_\infty$-algebras given by the inclusion
		\begin{equation}
			\label{eq:intro:first-step}
			(\Omega^\bullet(M,L)[1],d,[-,-]_\pi)\hookrightarrow(\Omega^\bullet(M)[1]\oplus\Omega^\bullet(L),\{\ell_k\}_{k\in\mathbb{N}}).
		\end{equation}
	\end{enumerate}
	
	The above helps to outline a strategy for achieving the main goal of this paper, namely splitting it into two intermediate goals.
	Since the $L_\infty$-algebras appearing in Equations~\eqref{eq:intro:second-step} and~\eqref{eq:intro:first-step} are connected by $L_\infty$-algebra quasi-isomorphisms, they encode the same formal moduli space.
	In our paper we aim to establish that these equivalences extend from formal moduli spaces to local moduli spaces, leading to two intermediate goals.
	\begin{quotation}
		\textbf{Goal \#1:} Identify the local moduli space of deformations of $(\omega,L)$ with the local moduli space of deformations of $\omega$ such that $L$ remains Lagrangian,
	\end{quotation}
	\begin{quotation}
		\textbf{Goal \#2:} Identify the latter with an open neighborhood of $0$ in the second relative de Rham cohomology group $H^2(M,L)$.
	\end{quotation}
	
	%section 1: The simultaneous deformation problem
	\textbf{Our Results.}
	Since we aim to describe the local moduli space of simultaneous deformations of the symplectic structure $\omega$ and the Lagrangian submanifold $L$, we will consider only pairs $(\omega',L')$ lying a sufficiently small neighborhood of $(\omega,L)$ up to a slightly finer notion of equivalence by isotopies adapted to this neighborhood.
	Using the Lagrangian neighborhood theorem, Definition~\ref{def:equiv} introduces the space $\mathcal{D}_{\mathcal{U}'}(\omega,L)/\sim$ which serves as the local model for the moduli space of simultaneous deformations around the equivalence class $[(\omega,L)]$.
	Additionally, $\operatorname{Def}_L(\omega)/\sim$ denotes the space of symplectic forms on $M$ such that $L$ is Lagrangian, modulo isotopies of diffeomorphisms preserving $L$.
	Then intermediate goal \#1 above is achieved by the following (see Proposition~\ref{prop:bijection}).
	\begin{prop*}[{\bf \#1}]
		There is a well-defined map 
		$
		q\colon\mathcal{D}_{\mathcal{U}'}(\omega,L)\rightarrow\operatorname{Def}_L(\omega): (\omega',L')\mapsto\phi_{L'}^\ast\omega'
		$
		which induces a  bijection between moduli spaces $\overline{q}\colon\mathcal{D}_{\mathcal{U}'}(\omega,L)/\sim\longrightarrow\operatorname{Def}_L(\omega)/\sim$.
	\end{prop*}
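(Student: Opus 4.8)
The plan is to define $q$ by ``straightening out'' the deformed Lagrangian. Fix once and for all the Weinstein tubular neighborhood identifying a neighborhood of $L$ in $M$ with one of the zero section in $T^\ast L$, so that for every pair $(\omega',L')\in\mathcal{D}_{\mathcal{U}'}(\omega,L)$ the submanifold $L'$, being $\mathcal{C}^1$-close to $L$, is the graph of a $\mathcal{C}^1$-small $1$-form on $L$. Recall that $\phi_{L'}$ is then the associated straightening diffeomorphism: it is isotopic and $\mathcal{C}^1$-close to the identity, and satisfies $\phi_{L'}(L)=L'$. First I would check well-definedness of $q$. Since $\phi_{L'}$ is a diffeomorphism, $\phi_{L'}^\ast\omega'$ is again symplectic; moreover, for $X,Y\in TL$ we have $(\phi_{L'}^\ast\omega')(X,Y)=\omega'(\phi_{L'\ast}X,\phi_{L'\ast}Y)=0$, because $\phi_{L'\ast}X,\phi_{L'\ast}Y\in TL'$ and $L'$ is $\omega'$-Lagrangian; hence $L$ is Lagrangian for $\phi_{L'}^\ast\omega'$. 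Finally $\phi_{L'}^\ast\omega'$ is $\mathcal{C}^0$-close to $\omega$, so it lies in $\operatorname{Def}_L(\omega)$ after shrinking $\mathcal{U}'$. This produces the set map $q$.

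The heart of the argument is a single conjugation identity, used both to descend $q$ to the quotients and to prove injectivity. Suppose $(\omega_0',L_0')\sim(\omega_1',L_1')$, witnessed (in the sense of Definition~\ref{def:equiv}) by an admissible isotopy $\{\psi_t\}$ with $\psi_0=\mathrm{id}$, $\psi_1^\ast\omega_1'=\omega_0'$ and $\psi_1(L_0')=L_1'$. Setting $L_t':=\psi_t(L_0')$, a path of submanifolds $\mathcal{C}^1$-close to $L$, I would define
\[
\chi_t:=\phi_{L_t'}^{-1}\circ\psi_t\circ\phi_{L_0'}.
\]
Then $\chi_0=\mathrm{id}$, and $\chi_t(L)=\phi_{L_t'}^{-1}(\psi_t(L_0'))=\phi_{L_t'}^{-1}(L_t')=L$, so $\{\chi_t\}$ is an isotopy through diffeomorphisms preserving $L$. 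At $t=1$, using $\phi_{L_1'}\circ\chi_1=\psi_1\circ\phi_{L_0'}$,
\[
\chi_1^\ast\bigl(\phi_{L_1'}^\ast\omega_1'\bigr)=(\psi_1\circ\phi_{L_0'})^\ast\omega_1'=\phi_{L_0'}^\ast\psi_1^\ast\omega_1'=\phi_{L_0'}^\ast\omega_0',
\]
which shows $q(\omega_1',L_1')\sim q(\omega_0',L_0')$ and hence that $\overline{q}$ is well defined. For injectivity I would run this in reverse: given an $L$-preserving isotopy $\{\chi_t\}$ with $\chi_1^\ast\phi_{L_1'}^\ast\omega_1'=\phi_{L_0'}^\ast\omega_0'$, choose a path $\{\Phi_t\}$ of straightening diffeomorphisms with $\Phi_0=\phi_{L_0'}$ and $\Phi_1=\phi_{L_1'}$ (e.g.\ the straightenings of the affine path of $1$-forms defining $L_0'$ and $L_1'$) and set $\psi_t:=\Phi_t\circ\chi_t\circ\phi_{L_0'}^{-1}$. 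Then $\psi_0=\mathrm{id}$, while $\psi_1^\ast\omega_1'=(\phi_{L_0'}^{-1})^\ast\chi_1^\ast\phi_{L_1'}^\ast\omega_1'=\omega_0'$ and $\psi_1(L_0')=\phi_{L_1'}(\chi_1(L))=L_1'$, so $(\omega_0',L_0')\sim(\omega_1',L_1')$.

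Surjectivity of $\overline{q}$ is then immediate via an explicit section: any $\sigma\in\operatorname{Def}_L(\omega)$ has $L$ as a Lagrangian, so the pair $(\sigma,L)$ lies in $\mathcal{D}_{\mathcal{U}'}(\omega,L)$ (after possibly shrinking neighborhoods), and since here $L'=L$ is the graph of the zero $1$-form we have $\phi_L=\mathrm{id}$ and therefore $q(\sigma,L)=\sigma$. Thus $[\sigma]\mapsto[(\sigma,L)]$ is a right inverse of $\overline{q}$, completing the bijection.

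The obstacle I anticipate is not algebraic but analytic: the entire argument is a bookkeeping of neighborhoods. One must choose $\mathcal{U}'$, together with the ambient neighborhoods entering $\operatorname{Def}_L(\omega)$ and the equivalence relation of Definition~\ref{def:equiv}, so that \emph{simultaneously} (i) the straightening $\phi_{L'}$ is defined and $\mathcal{C}^1$-small for every relevant $(\omega',L')$, (ii) each pullback $\phi_{L'}^\ast\omega'$ genuinely lands in the neighborhood defining $\operatorname{Def}_L(\omega)$, and (iii) the auxiliary isotopies $\{\chi_t\}$ and $\{\psi_t\}$ built above stay within the admissible small class, so that they truly witness the two equivalence relations. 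Checking that one choice of $\mathcal{U}'$ makes all these smallness estimates hold at once---so that the conjugation identity takes place entirely inside the prescribed neighborhoods---is the delicate point, and it is precisely what the setup of Definition~\ref{def:equiv} is engineered to guarantee.
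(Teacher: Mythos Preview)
Your argument is correct and in fact streamlines the paper's proof. Both you and the paper establish surjectivity via the section $[\sigma]\mapsto[(\sigma,L)]$, and both reduce well-definedness and injectivity to a conjugation identity. The difference lies in how the $L$-preserving isotopy is produced. For well-definedness the paper first builds a five-piece concatenated isotopy $(\tau_t)$ from $\mathrm{Id}$ to $\phi_{L_1'}^{-1}\circ\psi_1\circ\phi_{L_0'}$ satisfying only $\tau_t(L)\in\mathcal{U}'$, and then corrects it via $\sigma_t:=\phi_{\tau_t(L)}^{-1}\circ\tau_t$ to obtain an isotopy fixing $L$. You bypass this by writing down $\chi_t:=\phi_{\psi_t(L_0')}^{-1}\circ\psi_t\circ\phi_{L_0'}$ directly; this already starts at $\mathrm{Id}$ and fixes $L$ at every time, so the concatenation step is unnecessary. (Smooth dependence of $\phi_{L'}$ on $L'\in\mathcal{U}'$, which you use implicitly, follows from the construction in Definition~\ref{def:phi}.) For injectivity the paper again uses a five-piece concatenation, whereas your choice $\Phi_t=\phi_{\text{graph}((1-t)\alpha_0+t\alpha_1)}$ gives $\psi_t(L_0')=\Phi_t(L)\in\mathcal{U}'$ in one stroke---here you are tacitly using that $V'$ is fiberwise convex, which the paper's setup guarantees.

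Two small corrections to your write-up. First, $\operatorname{Def}_L(\omega)$ carries no smallness restriction: it is simply the set of \emph{all} symplectic forms on $M$ for which $L$ is Lagrangian, and its equivalence relation is by arbitrary isotopies preserving $L$. So there is no need to argue that $\phi_{L'}^\ast\omega'$ is $\mathcal{C}^0$-close to $\omega$, nor to shrink $\mathcal{U}'$, for $q$ to land in $\operatorname{Def}_L(\omega)$. Second, and consequently, the ``analytic obstacle'' you anticipate in your last paragraph largely evaporates: once $V'$ is chosen fiberwise convex, the isotopies $\chi_t$ and $\psi_t$ you construct automatically satisfy the admissibility conditions of Definition~\ref{def:equiv} (namely $\chi_t(L)=L$ and $\psi_t(L_0')\in\mathcal{U}'$), with no further bookkeeping required.
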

	
	In the above, the map $\phi_{L'}$ is a certain diffeomorphism moving $L$ to $L'$ that is constructed explicitly out of $L'$ (see Definition~\ref{def:phi} for more details).
	Next, intermediate goal \#2 above is achieved by the following result (see Proposition~\ref{prop:H2}).
		\begin{prop*}[{\bf \#2}]
			The map $\operatorname{Def}_L(\omega)/\sim\rightarrow H^2(M,L): [\omega']\mapsto[\omega-\omega']$ identifies the moduli space of a $\mathcal{C}^0$-open around $\omega$ in $\operatorname{Def}_L(\omega)$ with an open around $0$ in $H^2(M,L)$.
	\end{prop*}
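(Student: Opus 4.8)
The plan is to show that the affine map $\Phi\colon\operatorname{Def}_L(\omega)\to H^2(M,L)$, $\omega'\mapsto[\omega-\omega']$, descends to the moduli quotient and restricts to a bijection on a suitable $\mathcal{C}^0$-neighborhood of $\omega$. First I would check that $\Phi$ is well defined at the level of forms: since both $\omega$ and $\omega'$ are closed and vanish on $L$, the difference $\omega-\omega'$ is a closed element of $\Omega^2(M,L)=\ker(i_L^\ast)$, so the class $[\omega-\omega']\in H^2(M,L)$ makes sense.

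Next I would verify that $\Phi$ is constant on equivalence classes. If $\omega'_1=\psi_1^\ast\omega'_0$ for an isotopy $\{\psi_t\}$ preserving $L$ and generated by the time-dependent vector field $X_t$, the Cartan homotopy formula gives $\omega'_1-\omega'_0=d\beta$ with $\beta=\int_0^1\psi_t^\ast\iota_{X_t}\omega'_0\,dt$. Because each $\psi_t$ preserves $L$, the field $X_t$ is tangent to $L$ along $L$; since $L$ is Lagrangian for $\omega'_0$, the contraction $\iota_{X_t}\omega'_0$ restricts to zero on $L$, whence $i_L^\ast\beta=0$. Thus $\omega'_1-\omega'_0$ is exact in the relative complex and $\Phi([\omega'_0])=\Phi([\omega'_1])$, so $\Phi$ descends to moduli.

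For surjectivity onto an open neighborhood of $0$, given a class $c\in H^2(M,L)$ I would pick a closed representative $\sigma\in\Omega^2(M,L)$ and set $\omega':=\omega-\sigma$. Then $i_L^\ast\omega'=0$ and $\omega'$ is closed, so $L$ is isotropic and hence (being half-dimensional) Lagrangian; when $\sigma$ is $\mathcal{C}^0$-small, nondegeneracy, an open condition, guarantees $\omega'$ is symplectic, and $\Phi([\omega'])=c$ by construction. Since the linear quotient $\Omega^2_{\mathrm{cl}}(M,L)\to H^2(M,L)$ is open onto its finite-dimensional target, the classes represented by $\mathcal{C}^0$-small $\sigma$ fill a neighborhood of $0$.

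The heart of the argument, and the step I expect to be the main obstacle, is injectivity via a relative Moser argument. Suppose $[\omega-\omega'_0]=[\omega-\omega'_1]$, so $\omega'_1-\omega'_0=d\beta$ with $\beta\in\Omega^1(M,L)$. I would interpolate by $\omega_t:=\omega'_0+t\,d\beta$; for $\omega'_0,\omega'_1$ in a small $\mathcal{C}^0$-neighborhood of $\omega$ every $\omega_t$ is symplectic and keeps $L$ Lagrangian. Solving $\iota_{X_t}\omega_t=-\beta$ yields a vector field whose flow exists on $[0,1]$ by compactness of $M$ and provides the desired isotopy. The key point is that $X_t$ is automatically tangent to $L$: the identity $\iota_{X_t}\omega_t|_{TL}=-i_L^\ast\beta=0$ together with $(T_pL)^{\omega_t}=T_pL$ forces $X_t|_L\in TL$, so the flow preserves $L$ and carries $\omega'_1$ to $\omega'_0$. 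The delicate point is quantitative: to land inside the prescribed class of $\mathcal{C}^1$-small isotopies I must choose the primitive $\beta$ uniformly small, which requires a continuous linear homotopy (e.g.\ Hodge-theoretic) operator for the relative de Rham complex bounding $\|\beta\|$ in terms of $\|\omega'_1-\omega'_0\|$; reconciling this smallness control with the precise matching of the two neighborhoods is where the real work lies.
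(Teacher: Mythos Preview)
Your approach is essentially identical to the paper's: well-definedness via the Cartan homotopy formula, surjectivity by representing classes and using that nondegeneracy is open, and injectivity by a relative Moser argument showing the Moser vector field is tangent to $L$ because $(T_pL)^{\omega_t}=T_pL$. The only technical refinement the paper makes explicit is to first fix a \emph{convex} $\mathcal{C}^0$-neighborhood $\mathcal{W}$ of $\omega$ inside the symplectic forms (Lemma~\ref{lem:nbhd}), which is what guarantees the straight-line path $\omega_t$ stays symplectic; you assert this but do not isolate it.

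Your final paragraph, however, worries about a nonexistent difficulty. The equivalence relation on $\operatorname{Def}_L(\omega)$ requires only that the isotopy $(\rho_t)$ satisfy $\rho_t(L)=L$ for all $t$ and $\rho_1^\ast\omega'_1=\omega'_0$; there is \emph{no} smallness condition on the isotopy itself. Consequently you do not need any quantitative control on the primitive $\beta$, and no Hodge-theoretic homotopy operator is required: any $\beta\in\Omega^1(M,L)$ with $d\beta=\omega'_1-\omega'_0$ will do, and the resulting Moser flow, which exists globally by compactness of $M$, already lies in the correct equivalence class. The ``real work'' you anticipate simply is not there.
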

	Combining Propositions~(\#1) and (\#2) yields our main result (see Theorem~\ref{thm:main}).
	\begin{theorem*}
		The map $\mathcal{D}_{\mathcal{U}'}(\omega,L)/\sim\rightarrow H^2(M,L): [(\omega',L')]\mapsto[\omega-\phi_{L'}^\ast\omega']$ identifies the moduli space of a $\mathcal{C}^0\times\mathcal{C}^1$-open around $(\omega,L)$ in $\mathcal{D}_{\mathcal{U}'}(\omega,L)$ with an open around $0$ in $H^2(M, L)$.
	\end{theorem*}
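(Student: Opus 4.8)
The plan is to deduce the Main Theorem by composing the two intermediate results, Proposition~(\#1) and Proposition~(\#2), with the only real work lying in matching up the neighborhoods involved. First I would record the factorization
\[
\mathcal{D}_{\mathcal{U}'}(\omega,L)/\!\sim\ \xrightarrow{\ \overline{q}\ }\ \operatorname{Def}_L(\omega)/\!\sim\ \xrightarrow{\ \psi\ }\ H^2(M,L),
\]
where $\overline{q}$ is the bijection of Proposition~(\#1), sending $[(\omega',L')]$ to $[\phi_{L'}^\ast\omega']$, and $\psi$ is the map of Proposition~(\#2), sending $[\omega']$ to $[\omega-\omega']$. The composite sends $[(\omega',L')]$ to $[\omega-\phi_{L'}^\ast\omega']$, which is exactly the map in the statement; so it suffices to show that this composite restricts to a bijection from the moduli space of a suitable $\mathcal{C}^0\times\mathcal{C}^1$-open around $(\omega,L)$ onto an open around $0$ in $H^2(M,L)$.

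Next I would feed in the two propositions on the purely algebraic level. By Proposition~(\#2) there is a $\mathcal{C}^0$-open $V\subseteq\operatorname{Def}_L(\omega)$ around $\omega$ whose moduli space $V/\!\sim$ is carried bijectively by $\psi$ onto an open $\Omega_0$ around $0$ in $H^2(M,L)$. Since $\overline{q}$ is a bijection of the \emph{full} moduli spaces, the subset $U:=\overline{q}^{-1}(V/\!\sim)\subseteq\mathcal{D}_{\mathcal{U}'}(\omega,L)/\!\sim$ is mapped bijectively by $\overline{q}$ onto $V/\!\sim$, and hence $\psi\circ\overline{q}$ restricts to a bijection $U\to\Omega_0$. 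At this point the algebraic content is complete: everything reduces to realizing $U$ as the moduli space of a genuine $\mathcal{C}^0\times\mathcal{C}^1$-open neighborhood of $(\omega,L)$.

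The main obstacle, and the only genuinely analytic point, is precisely this neighborhood-matching step: one must control how the map $q$ of Proposition~(\#1) relates the finer $\mathcal{C}^0\times\mathcal{C}^1$-topology on pairs to the coarser $\mathcal{C}^0$-topology on forms in which Proposition~(\#2) operates. I would argue it directly from the explicit construction of $\phi_{L'}$ (Definition~\ref{def:phi}): since $\phi_L=\operatorname{id}$ one has $q(\omega,L)=\phi_L^\ast\omega=\omega\in V$, and since $L'\mapsto\phi_{L'}$ depends continuously on $L'$ in the $\mathcal{C}^1$-topology, the assignment $(\omega',L')\mapsto\phi_{L'}^\ast\omega'$ is $\mathcal{C}^0$-close to $\omega$ whenever $(\omega',L')$ is $\mathcal{C}^0\times\mathcal{C}^1$-close to $(\omega,L)$. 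Thus $q$ is continuous at $(\omega,L)$ in the required sense, so $q^{-1}(V)$ is a $\mathcal{C}^0\times\mathcal{C}^1$-open around $(\omega,L)$ lying inside the $\overline{q}$-preimage of $V/\!\sim$; after passing to the $\sim$-saturation (equivalently, shrinking to a saturated neighborhood) one realizes $U$ as the moduli space of such an open, and the bijection $U\to\Omega_0$ above completes the proof. I expect this topological bookkeeping — ensuring that the $\mathcal{C}^0$-neighborhood furnished by Proposition~(\#2) pulls back under $q$ to a bona fide neighborhood in the finer $\mathcal{C}^0\times\mathcal{C}^1$-topology, and that the global bijection $\overline{q}$ respects these neighborhoods — to be the only step requiring care beyond invoking Propositions~(\#1) and~(\#2).
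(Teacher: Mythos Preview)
Your factorization $\psi\circ\overline{q}$ and the identification of continuity of $q$ at $(\omega,L)$ as the key analytic input match the paper's approach exactly (the continuity statement is the paper's Prop.~\ref{prop:continuity}, proved via Gronwall-type estimates on the flow of $X_{L'}$). There is, however, a genuine gap in your final step. Continuity of $q$ \emph{at a single point} yields only an open $W\ni(\omega,L)$ with $q(W)\subset V$; it does not make $q^{-1}(V)$ itself open. The quotient $W/\!\sim$ then injects into $\Omega_0$ via $\psi\circ\overline{q}$, but nothing forces the image to be all of $\Omega_0$, or even open. Your proposed repair --- ``passing to the $\sim$-saturation (equivalently, shrinking to a saturated neighborhood)'' --- does not close this: saturating $W$ leaves $W/\!\sim$ unchanged and need not produce an open set, while a strictly smaller saturated open (if one exists at all) has moduli space strictly contained in $U$, so the bijection $U\to\Omega_0$ no longer applies.

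The paper does not try to realize $U$ as the quotient of an open. Instead it argues directly that the image of $(\mathcal{D}_{\mathcal{U}'}(\omega,L)\cap W)/\!\sim$ in $H^2(M,L)$ is open: given $(\omega',L')\in W$ with image $[\beta]$, fix closed forms $\eta_1,\dots,\eta_k\in\Omega^2(M,L)$ representing a basis of $H^2(M,L)$ and perturb the pair to $\bigl(\omega'-(\phi_{L'}^{-1})^{*}\sum_i\delta_i\eta_i,\,L'\bigr)$. For small $\delta$ these stay in $W$ (by openness in the product $\mathcal{C}^0\times\mathcal{C}^1$-topology with $L'$ held fixed), and their images fill the ball $[\beta]+\sum_i\delta_i[\eta_i]$. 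This is the same perturbation device already used inside the proof of Prop.~(\#2), now transported through $\phi_{L'}$. An alternative that would rescue your outline verbatim is to upgrade continuity of $q$ from ``at $(\omega,L)$'' to global continuity: then $q^{-1}(V)$ is genuinely open, and since it contains the section $\{(\omega',L):\omega'\in V\}$ one checks, using injectivity of $\overline{q}$, that $q^{-1}(V)/\!\sim\to\Omega_0$ is onto; but the paper does not take that route.
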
	
	\textbf{Structure of the paper.}
	The paper consists of five sections.
	In Section 1, we introduce the local model for the moduli space of simultaneous deformations.
	Section 2 reduces the simultaneous deformation problem of $(\omega,L)$ to the deformation problem of symplectic forms $\omega$ for which $L$ remains Lagrangian.
	In Section 3, we prove the main theorem stated above.
	Section 4 relates our result about the local moduli space of simultaneous deformations with the local moduli spaces of the individual deformation problems.
	Finally, Section 5 connects our findings to the result by Fiorenza and Manetti \cite{Fiorenza} on formality of Koszul brackets.
	
	\begin{ack}
	We would like to thank Marco Zambon for useful comments on a first draft of this paper. S.G. acknowledges support from the UCL Institute for Mathematical and Statistical Sciences (IMSS) and the Mathematical Institute of the Polish Academy of Sciences (IMPAN).
	A.G.T.~is a member of the National Group for Algebraic and Geometric Structures, and their Applications (GNSAGA – INdAM).
	\end{ack}

	\section{The simultaneous deformation problem}\label{sec:one}
	Assume we are given a compact symplectic manifold $(M,\omega)$ and a compact Lagrangian submanifold $L\subset(M,\omega)$. We aim to study simultaneous deformations of $\omega$ and $L$. That is, we will parametrize small deformations of the pair $(\omega,L)$ inside the space
	\begin{equation}\label{eq:def-space}
		\mathcal{D}(\omega,L):=\left\{(\omega',L'):\ \omega'\in\Omega^{2}(M)\ \text{is symplectic and}\ L'\subset(M,\omega')\ \text{is Lagrangian}\right\},
	\end{equation}
	quotienting by the equivalence relation 	
	\begin{equation}\label{eq:equiv}
		(\omega',L')\sim (\omega'',L'')\Leftrightarrow \exists\phi\in\text{Diff}_{0}(M)\ \text{such that}\ \phi^{*}\omega''=\omega'\ \text{and}\ \phi(L')=L''.
	\end{equation}
	Here $\text{Diff}_{0}(M)$ denotes the space of diffeomorphisms of $M$ that are isotopic to the identity. 
	
	\vspace{0.2cm}
	
	Rather than studying the entire moduli space $\mathcal{D}(\omega,L)/\sim$, we will consider pairs $(\omega',L')$ lying in a sufficiently small neighborhood of $(\omega,L)$, up to a slightly finer notion of equivalence that is adapted to this neighborhood. We will now make this precise. In order to restrict to Lagrangians $L'$ close to $L$, we need to recall the notion of non-linear Grassmanian.
	
	\begin{defi}
		The non-linear Grassmannian $\text{Gr}_{L}(M)$ is the Fréchet manifold consisting of all submanifolds of $M$ diffeomorphic to $L$.
	\end{defi}
	
	We will restrict to submanifolds $L'\subset M$ lying in a chart for  $\text{Gr}_{L}(M)$ around $L$. Such a chart can be obtained as follows \cite[\S\,2.1]{Grassmannian}. Using Weinstein's Lagrangian neighborhood theorem \cite{Weinstein}, we fix a symplectomorphism between a neighborhood $V$ of $L$ in $(T^{*}L,\omega_{can})$ and a neighborhood $U$ of $L$ in $(M,\omega)$, which restricts to the identity on $L$. Denote it by
	\begin{equation}\label{eq:symplecto}
	\psi:(V,\omega_{can})\rightarrow (U,\omega).
	\end{equation}
	For reasons that will become clear later, let us also fix smaller neighborhoods $V'$ and $U'$ of $L$ which correspond under $\psi$ and satisfy  $L\subset V'\subset\overline{V'}\subset V$ and $L\subset U'\subset\overline{U'}\subset U$. We also make sure that $V'$ is fiberwise convex. Now consider the set
	\[
	\Gamma_{V'}(T^{*}L)=\{\sigma\in\Gamma(T^{*}L):\ \sigma(L)\subset V'\},
	\]
	which is an open subset of the Fréchet space $\Gamma(T^{*}L)$. Let $\mathcal{U}'\subset \text{Gr}_{L}(M)$ be the neighborhood of $L$ consisting of submanifolds that are images of $\psi\circ\sigma:L\rightarrow M$ for $\sigma\in\Gamma_{V'}(T^{*}L)$. This way, we obtain a chart $\mathcal{U}'\rightarrow\Gamma_{V'}(T^{*}L)$ for $\text{Gr}_{L}(M)$ around $L$. We can now introduce a local model for the moduli space $\mathcal{D}(\omega,L)/\sim$, which will be our main object of study.
	
	\begin{defi}\label{def:equiv}
		Denote by $\Omega_{symp}(M)$ the space of symplectic forms on $M$.
		\begin{enumerate}[i)]
			\item We introduce the space
			\[
			\mathcal{D}_{\mathcal{U}'}(\omega,L):=\left\{(\omega',L')\in\Omega_{symp}(M)\times\mathcal{U}': L'\subset(M,\omega')\ \text{is Lagrangian}\right\}.
			\]	
			\item We refine the equivalence relation $\sim$ from \eqref{eq:equiv} as follows. If $(\omega',L'),(\omega'',L'')\in\mathcal{D}_{\mathcal{U}'}(\omega,L)$, we say that $(\omega',L')\sim(\omega'',L'')$ if there exists an isotopy $(\phi_t)_{t\in[0,1]}$ of $M$ such that 
			\[
			\phi_1(L')=L'',\hspace{0.5cm}(\phi_1)^{*}\omega''=\omega'\hspace{0.5cm}\text{and}\hspace{0.5cm}\phi_t(L')\in\mathcal{U}'\ \text{for all}\ t\in[0,1].
			\]
		\end{enumerate}
	\end{defi}
	
	To study the local model $\mathcal{D}_{\mathcal{U}'}(\omega,L)/\sim$ more effectively, it will be useful to provide an alternative description for it. This is the main aim of the next section.

\section{An alternative point of view}\label{sec:two}
In this section, we will prove that the local moduli space of simultaneous deformations $\mathcal{D}_{\mathcal{U}'}(\omega,L)/\sim$ can be identified with the moduli space of symplectic forms on $M$ for which $L$ is Lagrangian. This will be convenient because the latter can be parametrized more easily. 

\subsection{A map between moduli spaces}
Let us first introduce our notation for the moduli space of symplectic forms on $M$ with respect to which $L$ is Lagrangian.

	\begin{defi}
	Let $(M,\omega)$ be a compact symplectic manifold and $L\subset(M,\omega)$ a compact Lagrangian submanifold.
	We define the space
	\[
	\text{Def}_{L}(\omega):=\left\{\omega'\in\Omega_{symp}(M): L\subset(M,\omega')\ \text{is Lagrangian}\right\}.
	\]	
	For $\omega',\omega''\in\text{Def}_{L}(\omega)$, we say that $\omega'\sim\omega''$ if there is an isotopy $(\rho_t)_{t\in[0,1]}$ of $M$ such that 
	\[
	(\rho_1)^{*}\omega''=\omega'\hspace{0.5cm}\text{and}\hspace{0.5cm}\rho_t(L)=L\ \text{for all}\ t\in[0,1].
	\]
\end{defi}
	
The geometric intuition behind the correspondence between $\mathcal{D}_{\mathcal{U}'}(\omega,L)/\sim$ and $\text{Def}_{L}(\omega)/\sim$ is the following. Given  $(\omega',L')\in\mathcal{D}_{\mathcal{U}'}(\omega,L)$, one can construct a diffeomorphism isotopic to the identity which maps $L$ to $L'$. This means that $(\omega',L')$ is equivalent in $\mathcal{D}_{\mathcal{U}'}(\omega,L)$ with a pair of the form $(\omega'',L)$. To make this idea more precise, we fix some data and notation.

\begin{itemize}
	\item By the smooth Urysohn lemma, there exists a smooth function $f\in C^{\infty}(M)$ such that $f|_{\overline{U'}}\equiv 1$ and $\text{supp}(f)\subset U$. We fix such a function $f$ once and for all.
	\item Given $L'\in\mathcal{U}'$, there exists a unique section $\sigma\in\Gamma_{V'}(T^{*}L)$ such that $L'$ is the image of $\psi\circ\sigma:L\rightarrow M$. We can view the section $\sigma$ as a vertical, fiberwise constant vector field $X_{\sigma}$ on $T^{*}L$. Its flow at time $1$ takes $L$ to the submanifold $\text{Graph}(\sigma)$. 
\end{itemize}	
	
\begin{remark}\label{rem:corr}
The correspondence between sections $\sigma\in\Gamma(T^{*}L)$ and vertical, fiberwise constant vector fields $X_{\sigma}$ on $T^{*}L$ is given by the explicit formula 
\[
\iota_{X_{\sigma}}\omega_{can}=-p^{*}\sigma,
\]
where $p:T^{*}L\rightarrow L$ is the bundle projection. This is easily checked in cotangent coordinates $(x_1,\ldots,x_n,y_1,\ldots,y_n)$ on $T^{*}L$, since in such a chart we can write
\[
\omega_{can}=\sum_{i=1}^{n}dx_i\wedge dy_i,\hspace{0.5cm}\sigma=\sum_{i=1}^{n}g_idx_i\hspace{0.3cm}\text{and}\hspace{0.3cm}X_{\sigma}=\sum_{i=1}^{n}g_i\partial_{y_i}.
\]
\end{remark}

\begin{defi}\label{def:phi}
	For any $L'\in\mathcal{U}'$, we define $\phi_{L'}\in\text{Diff}_{0}(M)$ to be the flow at time $1$ of the vector field $X_{L'}:=f\psi_{*}(X_{\sigma})\in\mathfrak{X}(M)$. Note that it satisfies $\phi_{L'}(L)=L'$ and $\phi_{L}=\text{Id}$.
\end{defi}

We can now consider the map
\[
q:\mathcal{D}_{\mathcal{U}'}(\omega,L)\rightarrow\text{Def}_{L}(\omega):(\omega',L')\mapsto\phi_{L'}^{*}\omega'.
\]
It turns out that this map descends to a well-defined map at the level of moduli spaces.

\begin{lemma}\label{lem:well-defined}
	We have a well-defined map
	\[
	\overline{q}:\frac{\mathcal{D}_{\mathcal{U}'}(\omega,L)}{\sim}\rightarrow\frac{\text{Def}_{L}(\omega)}{\sim}:[(\omega',L')]\mapsto\big[\phi_{L'}^{*}\omega'\big].
	\]
\end{lemma}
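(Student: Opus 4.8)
The plan is to verify two things: that $q$ actually lands in $\text{Def}_{L}(\omega)$, and that it respects the two equivalence relations. The first point is immediate: $\phi_{L'}^{*}\omega'$ is symplectic because $\phi_{L'}$ is a diffeomorphism, and since $\phi_{L'}(L)=L'$ (Definition~\ref{def:phi}) with $L'$ Lagrangian in $(M,\omega')$, the submanifold $L$ is Lagrangian in $(M,\phi_{L'}^{*}\omega')$; hence $\phi_{L'}^{*}\omega'\in\text{Def}_{L}(\omega)$. The substance of the lemma is the second point: assuming $(\omega',L')\sim(\omega'',L'')$ via an isotopy $(\phi_{t})_{t\in[0,1]}$ with $\phi_{0}=\text{Id}$, $\phi_{1}(L')=L''$, $\phi_{1}^{*}\omega''=\omega'$ and $\phi_{t}(L')\in\mathcal{U}'$ for all $t$, I must show that $\phi_{L'}^{*}\omega'\sim\phi_{L''}^{*}\omega''$ in $\text{Def}_{L}(\omega)$.

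The natural candidate is the single diffeomorphism $\rho:=\phi_{L''}^{-1}\circ\phi_{1}\circ\phi_{L'}$. Since $\phi_{L'}(L)=L'$, $\phi_{1}(L')=L''$ and $\phi_{L''}^{-1}(L'')=L$, this $\rho$ maps $L$ to $L$; and a direct computation using $\phi_{1}^{*}\omega''=\omega'$ gives $\rho^{*}(\phi_{L''}^{*}\omega'')=(\phi_{1}\circ\phi_{L'})^{*}\omega''=\phi_{L'}^{*}\omega'$. Thus $\rho$ already realizes the required identity on forms and fixes $L$ setwise. The real task — and the main obstacle — is to upgrade $\rho$ to an isotopy $(\rho_{t})$ from $\text{Id}$ to $\rho$ with $\rho_{t}(L)=L$ for \emph{all} $t$, as demanded by the equivalence relation on $\text{Def}_{L}(\omega)$; the naive diffeomorphism $\rho$ preserves $L$ but is not \emph{a priori} connected to the identity through $L$-preserving diffeomorphisms.

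To produce such an isotopy I first build an auxiliary isotopy $\Psi_{t}$ from $\text{Id}$ to $\rho$ by concatenating three stages: the flow of $X_{L'}$ from $\text{Id}$ to $\phi_{L'}$; then $s\mapsto\phi_{s}\circ\phi_{L'}$; and finally the time-$(-s)$ flow of $X_{L''}$, i.e.\ $s\mapsto\phi_{L''}^{-s}\circ\phi_{1}\circ\phi_{L'}$, which ends at $\rho$. The point of this choice is that the path of submanifolds $L_{t}:=\Psi_{t}(L)$ stays inside the chart $\mathcal{U}'$ throughout: in the middle stage $L_{t}=\phi_{s}(L')\in\mathcal{U}'$ by hypothesis, while in the first and third stages $L_{t}$ is the image under $\psi$ of a graph $\text{Graph}(r\sigma)$ of a scaled section, $r\in[0,1]$ (using $f\equiv 1$ near $L$ and Remark~\ref{rem:corr}), which lies in $V'$ because $V'$ is fiberwise convex and contains both the zero section and the relevant $\text{Graph}(\sigma)$. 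In particular $L_{0}=L_{1}=L$.

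Finally I correct $\Psi_{t}$ back onto $L$. Since each $L_{t}\in\mathcal{U}'$, the diffeomorphism $\phi_{L_{t}}$ of Definition~\ref{def:phi} is defined and depends smoothly on $t$ (smoothness of the chart $\mathcal{U}'\to\Gamma_{V'}(T^{*}L)$ and of the time-$1$ flow in its generating vector field). Setting $\rho_{t}:=\phi_{L_{t}}^{-1}\circ\Psi_{t}$ gives $\rho_{t}(L)=\phi_{L_{t}}^{-1}(L_{t})=L$ for every $t$; moreover $\rho_{0}=\text{Id}$ and $\rho_{1}=\rho$, using $\phi_{L}=\text{Id}$ at the endpoints where $L_{t}=L$. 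Combined with the identity $\rho^{*}(\phi_{L''}^{*}\omega'')=\phi_{L'}^{*}\omega'$ from the second paragraph, this exhibits $\phi_{L'}^{*}\omega'\sim\phi_{L''}^{*}\omega''$ and shows $\overline{q}$ is well defined. The only remaining technical point, left to the routine, is smoothing the concatenation at the two gluing times, which is standard.
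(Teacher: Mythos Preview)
Your proof is correct and follows essentially the same strategy as the paper: you form the composite $\rho=\phi_{L''}^{-1}\circ\phi_{1}\circ\phi_{L'}$, build an auxiliary three-stage isotopy $\Psi_{t}$ (the paper's $\tau_{t}$) whose image of $L$ stays in $\mathcal{U}'$ by fiberwise convexity of $V'$, and then correct via $\rho_{t}:=\phi_{\Psi_{t}(L)}^{-1}\circ\Psi_{t}$ (the paper's $\sigma_{t}$) to obtain an $L$-preserving isotopy from $\text{Id}$ to $\rho$. The paper is simply more explicit about the smoothing reparametrization $h$ at the junction points, which you correctly flag as routine.
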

\begin{proof}
Assume that $(\omega',L')$ and $(\omega'',L'')$ are equivalent in $\mathcal{D}_{\mathcal{U}'}(\omega,L)$. This means that there exists an isotopy $(\rho_t)_{t\in[0,1]}$ such that 
\[
\rho_1(L')=L'',\hspace{0.5cm}(\rho_1)^{*}\omega''=\omega'\hspace{0.5cm}\text{and}\hspace{0.5cm}\rho_t(L')\in\mathcal{U}'\ \text{for all}\ t\in[0,1].
\]
Note that $\phi_{L''}^{-1}\circ\rho_1\circ\phi_{L'}$ is a diffeomorphism preserving $L$. We will show that $\phi_{L''}^{-1}\circ\rho_1\circ\phi_{L'}$ is isotopic to the identity through an isotopy $(\sigma_t)_{t\in[0,1]}$ satisfying $\sigma_{t}(L)=L$ for all $t\in[0,1]$. This will imply that $\phi_{L'}^{*}\omega'$ and $\phi_{L''}^{*}\omega''$ are equivalent in $\text{Def}_{L}(\omega)$, because
\[
\sigma_{1}^{*}\big(\phi_{L''}^{*}\omega''\big)=\big(\phi_{L''}^{-1}\circ\rho_1\circ\phi_{L'}\big)^{*}\big(\phi_{L''}^{*}\omega''\big)=\phi_{L'}^{*}(\rho_{1}^{*}\omega'')=\phi_{L'}^{*}\omega'.
\]
In fact, it suffices to construct an isotopy $(\tau_t)_{t\in[0,1]}$ connecting $\tau_0=\text{Id}$ and $\tau_1=\phi_{L''}^{-1}\circ\rho_1\circ\phi_{L'}$ and satisfying $\tau_{t}(L)\in\mathcal{U}'$ for all $t\in[0,1]$. Indeed, we can then define a smooth family of diffeomorphisms $\phi_{\tau_{t}(L)}$ as in Def.\,\ref{def:phi} satisfying
\[
\begin{cases}
	\phi_{\tau_{0}(L)}=\phi_{L}=\text{Id},\\
	\phi_{\tau_{1}(L)}=\phi_{(\phi_{L''}^{-1}\circ\rho_1\circ\phi_{L'})(L)}=\phi_{L}=\text{Id},\\
	\phi_{\tau_{t}(L)}(L)=\tau_{t}(L).
\end{cases}
\]
If we set $\sigma_t:=\phi_{\tau_{t}(L)}^{-1}\circ \tau_t$, then we get an isotopy $(\sigma_t)_{t\in[0,1]}$ which satisfies all requirements. 

We now proceed by constructing an isotopy $(\tau_t)_{t\in[0,1]}$ as described above. Fix a smooth function $h:[0,1]\rightarrow[0,1]$ satisfying
\[
\begin{cases}
	h(0)=0,\\
	\left.h\right|_{\left]0,\frac{1}{5}\right[}:\left]0,\frac{1}{5}\right[\rightarrow h\left(\left]0,\frac{1}{5}\right[\right)\ \text{is a diffeomorphism},\\
	\left.h\right|_{[\frac{1}{5},\frac{2}{5}]}\equiv\frac{1}{3},\\
	\left.h\right|_{\left]\frac{2}{5},\frac{3}{5}\right[}:\left]\frac{2}{5},\frac{3}{5}\right[\rightarrow h\left(\left]\frac{2}{5},\frac{3}{5}\right[\right)\ \text{is a diffeomorphism},\\
	\left.h\right|_{[\frac{3}{5},\frac{4}{5}]}\equiv\frac{2}{3},\\
	\left.h\right|_{\left]\frac{4}{5},1\right[}:\left]\frac{4}{5},1\right[\rightarrow h\left(\left]\frac{4}{5},1\right[\right)\ \text{is a diffeomorphism},\\
	h(1)=1.
\end{cases}
\]
Recall from Def.\,\ref{def:phi} that $\phi_{L'}$ is the time $1$-flow of the vector field $X_{L'}$. Let us denote this flow by $\phi^{t}_{X_{L'}}$. Similarly, we have that $\phi_{L''}$ is the flow $\phi^{t}_{X_{L''}}$ at time $1$. We define $\tau_t$ by setting
\begin{align*}
	\tau_t=\begin{cases}
		\phi^{3h(t)}_{X_{L'}}\ \ \ \ &\text{if}\ 0\leq t\leq\frac{1}{5},\\
		\phi^{1}_{X_{L'}}\ \ &\text{if}\ \frac{1}{5}\leq t\leq \frac{2}{5},\\
		\rho_{(3h(t)-1)}\circ\phi^{1}_{X_{L'}} &\text{if}\ \frac{2}{5}\leq t\leq \frac{3}{5},\\
		\rho_{1}\circ\phi^{1}_{X_{L'}} &\text{if}\ \frac{3}{5}\leq t\leq \frac{4}{5},\\
		\phi^{(2-3h(t))}_{X_{L''}}\circ\rho_{1}\circ\phi^{1}_{X_{L'}}&\text{if}\ \frac{4}{5}\leq t\leq 1.
	\end{cases}
\end{align*}
Then $\tau_t$ is smooth in $t$ and it connects $\tau_0=\text{Id}$ with $\tau_1=\phi^{-1}_{X_{L''}}\circ\rho_{1}\circ\phi^{1}_{X_{L'}}=\phi_{L''}^{-1}\circ\rho_1\circ\phi_{L'}$. We now check that $\tau_t(L)\in\mathcal{U}'$ for all $t\in[0,1]$. So we have to check that $\tau_t(L)$ corresponds with a section in $\Gamma_{V'}(T^{*}L)$. As $L',L''\in\mathcal{U}'$, they correspond with sections $\alpha',\alpha''\in\Gamma_{V'}(T^{*}L)$. 
\begin{itemize}
	\item For $0\leq t\leq\frac{1}{5}$, we have that $\tau_t(L)$ corresponds with $3h(t)\alpha'\in\Gamma_{V'}(T^{*}L)$.
	\item For $\frac{1}{5}\leq t\leq\frac{2}{5}$, we have that $\tau_t(L)$ corresponds with $\alpha'\in\Gamma_{V'}(T^{*}L)$.
	\item For $\frac{2}{5}\leq t\leq\frac{3}{5}$, we have that $0\leq 3h(t)-1\leq 1$. By assumption, $\rho_{s}(L')\in\mathcal{U}'$ for all $0\leq s\leq 1$. It follows that $\tau_t(L)=\rho_{(3h(t)-1)}(L')\in\mathcal{U}'$.
	\item For $\frac{3}{5}\leq t\leq\frac{4}{5}$, we have that $\tau_t(L)=\rho_1(L')=L''$ corresponds with $\alpha''\in\Gamma_{V'}(T^{*}L)$.
	\item For $\frac{4}{5}\leq t\leq 1$, we have that $\tau_t(L)$ corresponds with the section 
	$$
	\alpha''+(2-3h(t))\alpha''=3(1-h(t))\alpha''\in\Gamma_{V'}(T^{*}L).
	$$
\end{itemize}
In the first and last bullet point, we used that $V'\subset T^{*}L$ is fiberwise convex to conclude that the scalar multiples of $\alpha'$ and $\alpha''$ still take values in $V'$. The proof is now finished.
\end{proof}

	\begin{remark}
	The key point in the proof above is that the diffeomorphism $\phi_{L''}^{-1}\circ\rho_1\circ\phi_{L'}$ leaving $L$ invariant is isotopic to $\text{Id}$ through an isotopy that leaves $L$ invariant at each stage.

	In general, if $L$ is a compact Lagrangian submanifold of a compact symplectic manifold $(M,\omega)$ and  $f\in\text{Diff}_{0}(M)$ is such that $f(L)=L$, one can not always find an isotopy connecting $f$ with the identity that fixes $L$ at each stage. For instance, let $M$ be the unit sphere in $\mathbb{R}^{3}$ with its standard symplectic form and let $L$ be the equator. Set $f:=\text{Rot}_{x,\pi}$ to be the rotation around the $x$-axis over an angle $\pi$. Then $f$ fixes $L$ and it is isotopic to the identity via the isotopy $(\text{Rot}_{x,t\pi})_{t\in[0,1]}$. However, there is no isotopy connecting $f$ with the identity and fixing $L$ at each stage. This would imply that the diffeomorphism $f|_{L}\in\text{Diff}(L)$ is isotopic to the identity, which is impossible. Indeed, since $f|_{L}$ is the reflection around the $x$-axis, it is orientation reversing hence not isotopic to $\text{Id}$. 
	The statement is true however when the diffeomorphism $f$ fixing $L$ is sufficiently $\mathcal{C}^{1}$-small, see the proof of \cite[Lemma~6]{log-symplectic}.   
\end{remark}

In what follows, we will show that the map $\overline{q}$ is a bijection between the moduli spaces $\mathcal{D}_{\mathcal{U}'}(\omega,L)/\sim$ and $\text{Def}_{L}(\omega)/\sim$. We will first study $\overline{q}$ at the infinitesimal level, showing that its formal linearization at the class $[(\omega,L)]$ is a linear isomorphism.

\subsection{The infinitesimal equivalence}
To show that the formal linearization
\begin{equation}\label{eq:linearization}
d\overline{q}:T_{[(\omega,L)]}\big(\mathcal{D}_{\mathcal{U}'}(\omega,L)/\sim\big)\rightarrow T_{[\omega]}\big(\text{Def}_{L}(\omega)/\sim\big)
\end{equation}
is an isomorphism, we have to study the infinitesimal moduli spaces appearing in \eqref{eq:linearization}. We will identify them with suitable cohomology groups governing the respective moduli problems.

\subsubsection{The infinitesimal counterpart of $\mathcal{D}_{\mathcal{U}'}(\omega,L)/\sim$}
The deformation problem of the pair $(\omega,L)$ deforms two objects while requiring that a certain compatibility condition between them remains satisfied. The complex underlying such a deformation problem is usually the mapping cone of some cochain map between the relevant deformation complexes which captures how the deformation of one object is constrained by 
the deformation of the other\footnote{A recent example of this philosophy can be found in \cite{groupoids}. There one shows that the deformation complex of a symplectic groupoid $(\mathcal{G},\omega)$ is the mapping cone of a cochain map between the deformation complex of the Lie groupoid $\mathcal{G}$ and the deformation complex of the closed multiplicative two-form $\omega$.}.

\begin{defi}
	Let $(A^{\bullet},d_{A})$ and $(B^{\bullet},d_B)$ be complexes and $\Phi:(A^{\bullet},d_A)\rightarrow(B^{\bullet},d_B)$ a cochain map. The mapping cone $\mathcal{C}(\Phi)$ is the complex $\big(A^{\bullet}\oplus B^{\bullet-1},d_{\Phi}\big)$ with differential
	\[
	d_{\Phi}(a,b)=\big(d_{A}a,\Phi(a)-d_{B}b\big),\hspace{1cm}\text{for}\ a\in A^{k}, b\in B^{k-1}.
	\]
\end{defi}

In our situation, we are given a Lagrangian submanifold $L\subset(M,\omega)$ with inclusion map $\iota_{L}:L\hookrightarrow M$. The deformation complex of the Lagrangian submanifold $L$ is $\big(\Omega^{\bullet}(L),d\big)$ while the deformation complex of the symplectic form $\omega$ is $\big(\Omega^{\bullet}(M)[1],d\big)$, as we recall in \S\,\ref{sec:relation}. The condition of being Lagrangian is captured by the cochain map $\iota_{L}^{*}:\big(\Omega^{\bullet}(M),d\big)\rightarrow\big(\Omega^{\bullet}(L),d\big)$. Hence, it is natural to expect that the mapping cone  $\big(\mathcal{C}(\iota_{L}^{*}),d_{\iota_{L}^{*}}\big)$ of the pullback $\iota_{L}^{*}$ is the deformation complex of the pair $(\omega,L)$. We will now confirm that this is indeed the case. 

To argue what is the appropriate notion of first order deformation of $(\omega,L)$, we first need a suitable definition of smooth paths in $\mathcal{D}_{\mathcal{U}'}(\omega,L)$. This is provided by the following. Recall that a submanifold $L'\in\mathcal{U}'$ corresponds with a section $\alpha\in \Gamma_{V'}(T^{*}L)$.

\begin{defi}
A path $(\omega_t,L_t)$ in $\mathcal{D}_{\mathcal{U}'}(\omega,L)$ is smooth if $\omega_t$ is a smooth path in $\Omega^{2}(M)$ and the path $\alpha_t$ in $\Gamma_{V'}(T^{*}L)$ corresponding with $L_t$ is smooth.
\end{defi}

\begin{lemma}\label{lem:first-order}
Let $(\omega_t,L_t)$ be a smooth path in $\mathcal{D}_{\mathcal{U}'}(\omega,L)$ starting at $(\omega,L)$. Denote by $\alpha_t$ the path in $\Gamma_{V'}(T^{*}L)$ corresponding with $L_t$. Then the following hold:
\begin{enumerate}
	\item The infinitesimal deformation $(\dot{\omega}_0,\dot{\alpha}_0)$ is a cocycle in  $\big(\mathcal{C}(\iota_{L}^{*}),d_{\iota_{L}^{*}}\big)$.
	\item If the path $(\omega_t,L_t)$ is generated by an isotopy $\phi_t$, meaning that
	\[
	\phi_t(L)=L_t\hspace{0.2cm}\text{and}\hspace{0.2cm}\phi_t^{*}\omega_t=\omega,
	\]
	then the infinitesimal deformation $(\dot{\omega}_0,\dot{\alpha}_0)$ is a coboundary in $\big(\mathcal{C}(\iota_{L}^{*}),d_{\iota_{L}^{*}}\big)$.
\end{enumerate}
\end{lemma}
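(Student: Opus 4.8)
The plan is to verify (1) by checking directly that both components of $d_{\iota_L^*}(\dot\omega_0,\dot\alpha_0)$ vanish, and to prove (2) by exhibiting an explicit primitive in the cone. First I would record that each $\omega_t$ is symplectic, hence closed, so differentiating $d\omega_t=0$ at $t=0$ gives $d\dot\omega_0=0$, which is exactly the vanishing of the first component. For the second component I would encode the moving Lagrangian condition analytically. Writing $j_t:=\psi\circ\alpha_t\colon L\to M$, so that $j_t(L)=L_t$ and $j_0=\iota_L$ (because $\alpha_0=0$ and $\psi|_L=\mathrm{id}$), the statement that $L_t$ is Lagrangian in $(M,\omega_t)$ becomes $j_t^*\omega_t=0$, equivalently $\alpha_t^*\eta_t=0$ with $\eta_t:=\psi^*\omega_t$ and $\eta_0=\omega_{can}$. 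Differentiating $\alpha_t^*\eta_t=0$ at $t=0$ and using the product rule splits it as $\frac{d}{dt}\big|_0(\alpha_t^*\omega_{can})+(0_{\mathrm{sec}})^*\dot\eta_0=0$. The second term equals $\iota_L^*\dot\omega_0$ since $\psi\circ 0_{\mathrm{sec}}=\iota_L$, while for the first term I would use the identity $\sigma^*\omega_{can}=-d\sigma$ for a one-form $\sigma$ (checked in the coordinates of Remark~\ref{rem:corr}), whose linearity in $\sigma$ gives $\frac{d}{dt}\big|_0(\alpha_t^*\omega_{can})=-d\dot\alpha_0$. This yields precisely $\iota_L^*\dot\omega_0=d\dot\alpha_0$, i.e. the vanishing of the second component, so $(\dot\omega_0,\dot\alpha_0)$ is a cocycle.

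For part (2), let $Y$ denote the vector field generating $\phi_t$ at $t=0$. Differentiating $\phi_t^*\omega_t=\omega$ at $t=0$ gives $\mathcal{L}_Y\omega+\dot\omega_0=0$, and since $\omega$ is closed Cartan's formula yields $\dot\omega_0=-d\iota_Y\omega=d\beta$ with $\beta:=-\iota_Y\omega\in\Omega^1(M)$. It then remains to identify $\dot\alpha_0$ with $\iota_L^*\beta$, which would exhibit $(\dot\omega_0,\dot\alpha_0)=d_{\iota_L^*}(\beta,0)$ as a coboundary. To get at $\dot\alpha_0$, I would push the vertical variation field $W:=\frac{d}{dt}\big|_0\alpha_t$ forward by $\psi$ to obtain the variation field $X:=\frac{d}{dt}\big|_0 j_t=\psi_*W$ of $j_t$; using that $\psi$ is a symplectomorphism and the correspondence of Remark~\ref{rem:corr}, one has $\psi^*(\iota_X\omega)=\iota_W\omega_{can}=-p^*\dot\alpha_0$ along the zero section, and pulling back by $0_{\mathrm{sec}}$ gives $\dot\alpha_0=-\iota_L^*(\iota_X\omega)$.

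The main obstacle, and the crux of part (2), is that $j_t$ and $\phi_t\circ\iota_L$ are two different parametrizations of the same moving submanifold $L_t$, so their variation fields $X$ and $Y|_L$ need not coincide: they differ by the pushforward of a time-dependent reparametrization of $L$, i.e. $Y|_L=X+Z$ with $Z$ tangent to $L$. The key point is that this discrepancy disappears after applying $\iota_L^*$, because for $Z$ tangent to $L$ one has $\iota_L^*(\iota_Z\omega)=0$ precisely by the Lagrangian condition $\omega|_{TL\times TL}=0$. Hence $\iota_L^*(\iota_Y\omega)=\iota_L^*(\iota_X\omega)$, and combining this with the previous identity gives $\iota_L^*\beta=-\iota_L^*(\iota_Y\omega)=-\iota_L^*(\iota_X\omega)=\dot\alpha_0$, which finishes the argument.

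I expect the genuinely delicate points to be confined to part (2): carefully relating the two parametrizations of $L_t$ and isolating the tangential ambiguity that the Lagrangian hypothesis kills, together with the correct identification of the vertical variation of sections with one-forms via Remark~\ref{rem:corr}. The remaining work, including the sign bookkeeping in $\sigma^*\omega_{can}=-d\sigma$ and in $\dot\alpha_0=-\iota_L^*(\iota_X\omega)$, is routine once these identifications are pinned down, and should produce the primitive $(\beta,0)$ with $\beta=-\iota_Y\omega$ as above.
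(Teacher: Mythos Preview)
Your proposal is correct and follows essentially the same route as the paper: for (1) both you and the paper differentiate the Lagrangian condition $\alpha_t^{*}(\psi^{*}\omega_t)=0$ at $t=0$ (the paper writes the first summand via $\alpha_t^{*}\theta_{taut}=\alpha_t$, you via the equivalent identity $\sigma^{*}\omega_{can}=-d\sigma$), and for (2) both differentiate $\phi_t^{*}\omega_t=\omega$ to get $\dot\omega_0=-d\iota_{Y_0}\omega$ and then identify $\dot\alpha_0=-\iota_L^{*}(\iota_{Y_0}\omega)$ by projecting $Y_0|_L$ onto its vertical part and using that the tangential part is killed by $\iota_L^{*}\omega_{can}=0$. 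The only cosmetic difference is that where the paper quotes an external lemma to obtain $\dot\alpha_0=P(Z_0|_L)$, you give the equivalent self-contained argument comparing the two parametrizations $j_t$ and $\phi_t\circ\iota_L$ of $L_t$; this is exactly the content of that lemma.
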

\begin{proof}
For item $(1)$, it is clear that $\dot{\omega}_0$ is closed. It remains to show that
\begin{equation}\label{eq:closedness}
\iota_{L}^{*}\dot{\omega}_0=d\dot{\alpha}_0
\end{equation}
Because the symplectomorphism $\psi:(V,\omega_{can})\rightarrow (U,\omega)$ that we fixed in \eqref{eq:symplecto} restricts to the identity on $L$, we may assume that $\omega_t$ is a path of symplectic forms on $V\subset T^{*}L$ starting at $\omega_{can}$ such that the graph of $\alpha_t\in\Gamma_{V'}(T^{*}L)$ is Lagrangian with respect to $\omega_t$. This means that $\alpha_{t}^{*}\omega_t=0$. Differentiating this equality at time $t=0$, we obtain 
\[
0=\left.\frac{d}{dt}\right|_{t=0}\alpha_{t}^{*}\omega_{can}+\iota_{L}^{*}\dot{\omega}_0=-d\left(\left.\frac{d}{dt}\right|_{t=0}\alpha_{t}^{*}\theta_{taut}\right)+\iota_{L}^{*}\dot{\omega}_0=-d\dot{\alpha}_0+\iota_{L}^{*}\dot{\omega}_0.
\]
Here we used that the tautological one-form $\theta_{taut}\in\Omega^{1}(T^{*}L)$ satisfies $\alpha^{*}\theta_{taut}=\alpha$ for all $\alpha\in\Gamma(T^{*}L)$, see \cite[Prop.\,3.1.18]{McDuff}. This shows that \eqref{eq:closedness} holds, which proves item $(1)$.

For item $2)$, let $Y_t$ be the time dependent vector field defined by the isotopy $\phi_t$. We claim that the infinitesimal deformation $(\dot{\omega}_0,\dot{\alpha}_0)$ satisfies
\begin{equation}\label{eq:exact}
(\dot{\omega}_0,\dot{\alpha}_0)=-\big(d\iota_{Y_0}\omega,\iota_{L}^{*}(\iota_{Y_0}\omega)\big)=d_{\iota_{L}^{*}}(-\iota_{Y_0}\omega,0).
\end{equation}
First, differentiating the equality $\phi_t^{*}\omega_t=\omega$ at time $t=0$ gives
\[
0=\left.\frac{d}{dt}\right|_{t=0}\phi_t^{*}\omega_t=\pounds_{Y_0}\omega+\dot{\omega}_0=d\iota_{Y_0}\omega+\dot{\omega}_0,
\]
which shows that the first components in \eqref{eq:exact} are equal. To show that the second components in \eqref{eq:exact} are also equal, we will again use that the symplectomorphism $\psi:(V,\omega_{can})\rightarrow (U,\omega)$ restricts to the identity on $L$. Denoting by $Z_t:=(\psi^{-1})_{*}Y_t$, it then suffices to prove that
\begin{equation}\label{eq:toprove}
\dot{\alpha}_0=-\iota_{L}^{*}\big(\iota_{Z_0}\omega_{can}\big).
\end{equation}
Note that the isotopy $\psi^{-1}\circ\phi_t\circ\psi$ integrating the time-dependent vector field $Z_t$ satisfies $\big(\psi^{-1}\circ\phi_t\circ\psi\big)(L)=\text{graph}(\alpha_t)$. By \cite[Lemma\,3.13]{equivalences}, this implies that
\begin{equation}\label{eq:dot}
\dot{\alpha}_0=P\big(Z_0|_{L}\big),
\end{equation}
where $P$ is the vertical projection in the decomposition $T(T^{*}L)|_{L}=TL\oplus T^{*}L$. Unravelling this equality using the explicit correspondence between sections of $T^{*}L$ and vertical, fiberwise constant vector fields on $T^{*}L$ from Rem.\,\ref{rem:corr}, the expression \eqref{eq:dot} in fact states that
\[
\dot{\alpha}_0=-\iota_{L}^{*}\left(\iota_{P\big(Z_0|_{L}\big)}\omega_{can}\right)=-\iota_{L}^{*}\big(\iota_{Z_0}\omega_{can}\big).
\]
In the last equality, we used that $\iota_{L}^{*}\omega_{can}=0$. This shows that the equation \eqref{eq:toprove} holds.
\end{proof}

The following result is a converse to Lemma\,\ref{lem:first-order}. It shows in particular that the simultaneous deformation problem of $(\omega,L)$ is smoothly unobstructed.

\begin{lemma}\label{lem:unobstructed}
Let $(\eta,\beta)\in\Omega^{2}(M)\times\Omega^{1}(L)$ be a cocycle in $\big(\mathcal{C}(\iota_{L}^{*}),d_{\iota_{L}^{*}}\big)$.
\begin{enumerate}
\item There exists a smooth path $(\omega_t,L_t)$ in $\mathcal{D}_{\mathcal{U}'}(\omega,L)$ starting at $(\omega,L)$ whose associated infinitesimal deformation $(\dot{\omega}_0,\dot{\alpha}_0)$ is exactly $(\eta,\beta)$.
\item If $(\eta,\beta)$ is a coboundary in $\big(\mathcal{C}(\iota_{L}^{*}),d_{\iota_{L}^{*}}\big)$, then one can find such a path $(\omega_t,L_t)$ that is generated by an isotopy $\phi_t$.
\end{enumerate}
\end{lemma}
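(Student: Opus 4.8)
The plan is to produce explicit paths by the same device in both cases: deform $\omega$ along the ``relative part'' of $\eta$ while keeping $L$ fixed and Lagrangian, and move $L$ by a Moser-type flow that carries the ``exact part''. Unpacking the cocycle condition $d_{\iota_{L}^{*}}(\eta,\beta)=(d\eta,\iota_{L}^{*}\eta-d\beta)=0$ gives $d\eta=0$ and $\iota_{L}^{*}\eta=d\beta$. For item $(1)$ I would first extend $\beta$ to a global one-form $\tilde\beta\in\Omega^{1}(M)$ with $\iota_{L}^{*}\tilde\beta=\beta$ (using a tubular neighbourhood and a cut-off), and set $\eta':=\eta-d\tilde\beta$. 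Then $\eta'$ is closed and $\iota_{L}^{*}\eta'=d\beta-d\beta=0$, so $\eta'$ is a relative cocycle; this records the decomposition $(\eta,\beta)=(\eta',0)+d_{\iota_{L}^{*}}(\tilde\beta,0)$ in the cone.

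Next, let $Y\in\mathfrak{X}(M)$ be determined by $\iota_{Y}\omega=-\tilde\beta$ (possible since $\omega$ is nondegenerate), let $g_{t}$ be its flow, and define
\[
\Omega_{t}:=\omega+t\eta',\qquad \omega_{t}:=(g_{t}^{-1})^{*}\Omega_{t},\qquad L_{t}:=g_{t}(L).
\]
I would then verify this is a smooth path in $\mathcal{D}_{\mathcal{U}'}(\omega,L)$ for $t$ small: $\Omega_{t}$ is symplectic for small $t$ by openness of nondegeneracy on the compact $M$, hence so is the diffeomorphic pushforward $\omega_{t}$; since $\iota_{L}^{*}\Omega_{t}=t\,\iota_{L}^{*}\eta'=0$ the zero section $L$ is $\Omega_{t}$-Lagrangian, and transporting by $g_{t}$ (which satisfies $g_{t}^{*}\omega_{t}=\Omega_{t}$) shows $L_{t}$ is $\omega_{t}$-Lagrangian; finally $L_{t}=g_{t}(L)\in\mathcal{U}'$ for small $t$ because $g_{0}(L)=L$ and $\mathcal{U}'$ is open (reparametrise to $[0,1]$ if desired). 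The crux is the velocity computation. For the form,
\[
\dot\omega_{0}=\frac{d}{dt}\Big|_{t=0}(g_{t}^{-1})^{*}\Omega_{t}=-\pounds_{Y}\omega+\eta'=-d\iota_{Y}\omega+\eta'=d\tilde\beta+\eta'=\eta.
\]
For the submanifold, the motion of $L_{t}=g_{t}(L)$ depends only on its generator $Y$, so Lemma~\ref{lem:first-order}(2) (through the correspondence of Remark~\ref{rem:corr}) yields $\dot\alpha_{0}=-\iota_{L}^{*}(\iota_{Y}\omega)=\iota_{L}^{*}\tilde\beta=\beta$. Hence $(\dot\omega_{0},\dot\alpha_{0})=(\eta,\beta)$.

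The subtle point, and the one I expect to require the most care, is getting $\dot\alpha_{0}$ to equal $\beta$ exactly rather than merely up to a closed form: this is precisely what the sharp expression $\dot\alpha_{0}=-\iota_{L}^{*}(\iota_{Y}\omega)$ provides, and it is why the exact primitive $\tilde\beta$ must be fixed before splitting off $\eta'$. Note also that $g_{t}$ does not preserve $\omega_{t}$ (indeed $g_{t}^{*}\omega_{t}=\Omega_{t}\neq\omega$), so the path above is generally not generated by an isotopy in the sense of Lemma~\ref{lem:first-order}(2); this is why item $(2)$ needs a separate, purely Moser-theoretic construction. There, writing $(\eta,\beta)=d_{\iota_{L}^{*}}(a,b)$ with $a\in\Omega^{1}(M)$ and $b\in C^{\infty}(L)$ gives $\eta=da$ and $\beta=\iota_{L}^{*}a-db$; I would extend $b$ to $\tilde b\in C^{\infty}(M)$ and set $a':=a-d\tilde b$, so that $da'=\eta$ and $\iota_{L}^{*}a'=\beta$. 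Taking $Y$ with $\iota_{Y}\omega=-a'$, its flow $\phi_{t}$, and $\omega_{t}:=(\phi_{t}^{-1})^{*}\omega$, $L_{t}:=\phi_{t}(L)$, produces a path generated by the isotopy $\phi_{t}$ (since $\phi_{t}^{*}\omega_{t}=\omega$ and $\phi_{t}(L)=L_{t}$), lying in $\mathcal{D}_{\mathcal{U}'}(\omega,L)$ for small $t$ by the same openness arguments, whose infinitesimal deformation is $-(d\iota_{Y}\omega,\iota_{L}^{*}(\iota_{Y}\omega))=(da',\iota_{L}^{*}a')=(\eta,\beta)$ by Lemma~\ref{lem:first-order}(2).
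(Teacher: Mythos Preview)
Your proof is correct and follows essentially the same route as the paper: extend $\beta$ to $\tilde\beta$, split $(\eta,\beta)=(\eta-d\tilde\beta,0)+d_{\iota_L^*}(\tilde\beta,0)$, deform $\omega$ linearly by $\eta-d\tilde\beta$ while flowing $L$ by the vector field $Y$ with $\iota_Y\omega=-\tilde\beta$, and compute $\dot\alpha_0$ via the argument in the proof of Lemma~\ref{lem:first-order}(2). The only cosmetic difference is in item~(2): the paper invokes the prior observation that every two-coboundary has the form $d_{\iota_L^*}(\zeta,0)$, whereas you reduce $(a,b)$ to $a'=a-d\tilde b$ explicitly, which amounts to the same thing.
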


Before proving Lemma\,\ref{lem:unobstructed}, we make some useful observations.
\begin{itemize}
	\item We saw in equation \eqref{eq:exact} that the first order deformation coming from a path $(\omega_t,L_t)$ generated by an isotopy is a coboundary lying in $d_{\iota_{L}^{*}}\big(\Omega^{1}(M)\oplus\{0\}\big)$. In fact, every two-coboundary is of this form since
	\begin{equation}\label{eq:coboundaries}
	d_{\iota_{L}^{*}}\big(\Omega^{k}(M)\oplus\{0\}\big)=d_{\iota_{L}^{*}}\big(\Omega^{k}(M)\oplus \Omega^{k-1}(L)\big).
	\end{equation}
	Indeed, given $(\eta,\beta)\in\Omega^{k}(M)\oplus \Omega^{k-1}(L)$ one can always find $\zeta\in\Omega^{k}(M)$ such that
	\[
	d_{\iota_{L}^{*}}(\zeta,0)=(d\zeta,\iota_{L}^{*}\zeta)=(d\eta,\iota_{L}^{*}\eta-d\beta)=d_{\iota_{L}^{*}}(\eta,\beta).
	\]
	For instance, let $\widetilde{\beta}\in \Omega^{k-1}(M)$ be any extension of $\beta\in \Omega^{k-1}(L)$ and set $\zeta:=\eta-d\widetilde{\beta}$.
	\item Let $(\eta,\beta)\in\Omega^{2}(M)\times\Omega^{1}(L)$ be a two-cocycle in $\big(\mathcal{C}(\iota_{L}^{*}),d_{\iota_{L}^{*}}\big)$, that is
	\[
	\begin{cases}
		d\eta=0\\
		\iota_{L}^{*}\eta=d\beta
	\end{cases}.
	\]
	If $\widetilde{\beta}\in\Omega^{1}(M)$ is any extension of $\beta$, then we have
	\begin{equation}\label{eq:dec-inf}
	(\eta,\beta)=(\eta-d\widetilde{\beta},0)+(d\widetilde{\beta},\beta)=(\eta-d\widetilde{\beta},0)+d_{\iota_{L}^{*}}(\widetilde{\beta},0).
	\end{equation}
	Hence, the cocycle $(\eta,\beta)$ is cohomologous with the cocycle $(\eta-d\widetilde{\beta},0)$ whose second component is zero. This indicates how we should construct a path of deformations $(\omega_t,L_t)$ prolonging the first order deformation $(\eta,\beta)$. First take a path of the form $(\omega'_t,L)$ prolonging $(\eta-d\widetilde{\beta},0)$, then correct this path by applying a suitable isotopy.
\end{itemize}

\begin{proof}[Proof of Lemma\,\ref{lem:unobstructed}]
For item $(1)$, let $(\eta,\beta)\in\Omega^{2}(M)\times\Omega^{1}(L)$ be a two-cocycle in $(\mathcal{C}(\iota_{L}^{*}),d_{\iota_{L}^{*}})$ and fix an extension $\widetilde{\beta}\in\Omega^{1}(M)$ of $\beta$. We will follow the strategy outlined in the second bullet point above. First, take $\epsilon>0$ such that for all $0\leq t\leq\epsilon$, the two-form $\omega+t(\eta-d\widetilde{\beta})$ is still symplectic. Note that $L$ is Lagrangian with respect to  $\omega+t(\eta-d\widetilde{\beta})$ for all $0\leq t\leq\epsilon$. Hence we get a smooth path 
\[
\big(\omega+t(\eta-d\widetilde{\beta}),L\big)_{0\leq t\leq\epsilon}
\] 
in $\mathcal{D}_{\mathcal{U}'}(\omega,L)$, which prolongs the first order deformation $(\eta-d\widetilde{\beta},0)$. Second, let $\phi_t$ be the flow of the vector field $Y\in\mathfrak{X}(M)$ determined by $\iota_{Y}\omega=-\widetilde{\beta}$. Shrinking $\epsilon$ if needed, we can make sure that the path $(\phi_t(L))_{0\leq t\leq\epsilon}$ stays inside the neighbourhood $\mathcal{U}'\subset\text{Gr}_{L}(M)$ of $L$. We then get a smooth path
\[
\Big((\phi_{t}^{-1})^{*}\big(\omega+t(\eta-d\widetilde{\beta})\big),\phi_t(L)\Big)_{0\leq t\leq\epsilon}
\] 
in $\mathcal{D}_{\mathcal{U}'}(\omega,L)$, which prolongs the first order deformation $(\eta,\beta)$. Indeed, on one hand 
\[
\left.\frac{d}{dt}\right|_{t=0}(\phi_{t}^{-1})^{*}\big(\omega+t(\eta-d\widetilde{\beta})\big)=-\pounds_{Y}\omega+\eta-d\widetilde{\beta}=-d\iota_{Y}\omega+\eta+d\iota_{Y}\omega=\eta.
\]
On the other hand, let $(\alpha_t)_{0\leq t\leq \epsilon}$ be the path in $\Gamma_{V'}(T^{*}L)$ corresponding with the path $(\phi_t(L))_{0\leq t\leq\epsilon}$ in $\mathcal{U}'$. Since $\iota_{Y}\omega=-\widetilde{\beta}$,  checking that $\dot{\alpha}_0=\beta$ amounts to showing that
\[
\dot{\alpha}_{0}=-\iota_{L}^{*}(\iota_{Y}\omega).
\]
This is done exactly as in the proof of Lemma\,\ref{lem:first-order}\,$(2)$. This finishes the proof of item\,$(1)$.

For item\,$(2)$, recall from \eqref{eq:coboundaries} that every two-coboundary in $\big(\mathcal{C}(\iota_{L}^{*}),d_{\iota_{L}^{*}}\big)$ is of the form
\[
d_{\iota_{L}^{*}}(\zeta,0)=(d\zeta,\iota_{L}^{*}\zeta),\hspace{1cm}\zeta\in\Omega^{1}(M).
\]
Let $\phi_t$ be the flow of the vector field $Y\in\mathfrak{X}(M)$ determined by $\iota_{Y}\omega=-\zeta$. Take $\epsilon>0$ small enough such that the path $(\phi_t(L))_{0\leq t\leq\epsilon}$ stays inside the neighbourhood $\mathcal{U}'\subset\text{Gr}_{L}(M)$ of $L$. We then get a smooth path
\[
\big((\phi_{t}^{-1})^{*}\omega,\phi_t(L)\big)_{0\leq t\leq\epsilon}
\]
in $\mathcal{D}_{\mathcal{U}'}(\omega,L)$. The proof of Lemma\,\ref{lem:first-order}~ $(2)$ shows that the first order deformation of $(\omega,L)$ coming from this path is exactly given by
\[
d_{\iota_{L}^{*}}(-\iota_{Y}\omega,0)=d_{\iota_{L}^{*}}(\zeta,0). \qedhere
\]
\end{proof}

\begin{cor}\label{cor:coho1}
The formal tangent space $T_{[(\omega,L)]}\big(\mathcal{D}_{\mathcal{U}'}(\omega,L)/\sim\big)$ is given by $H^{2}\big(\mathcal{C}(\iota_{L}^{*})\big)$.
\end{cor}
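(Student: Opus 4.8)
The plan is to realize the formal tangent space to the quotient $\mathcal{D}_{\mathcal{U}'}(\omega,L)/\sim$ at $[(\omega,L)]$ as the quotient of the space of infinitesimal deformations of $(\omega,L)$ by the subspace of those that are infinitesimally trivial, i.e.~generated by an isotopy. Concretely, the formal tangent space to $\mathcal{D}_{\mathcal{U}'}(\omega,L)$ at $(\omega,L)$ is the space of velocity vectors $(\dot{\omega}_0,\dot{\alpha}_0)$ of smooth paths $(\omega_t,L_t)$ in $\mathcal{D}_{\mathcal{U}'}(\omega,L)$ starting at $(\omega,L)$, and the formal tangent space to the orbit through $(\omega,L)$ is the subspace of those velocities that come from a path generated by an isotopy $\phi_t$ as in Lemma~\ref{lem:first-order}~$(2)$. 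The formal tangent space to the quotient is then by definition the quotient of the former by the latter.

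First I would identify the space of infinitesimal deformations with the space $Z^2\big(\mathcal{C}(\iota_{L}^{*})\big)$ of two-cocycles of the mapping cone. One inclusion is Lemma~\ref{lem:first-order}~$(1)$, which says that every velocity $(\dot{\omega}_0,\dot{\alpha}_0)$ of a smooth path is a cocycle; the reverse inclusion is exactly the unobstructedness statement Lemma~\ref{lem:unobstructed}~$(1)$, which realizes every two-cocycle $(\eta,\beta)$ as the velocity of a genuine smooth path in $\mathcal{D}_{\mathcal{U}'}(\omega,L)$. Next I would identify the infinitesimally trivial deformations with the space $B^2\big(\mathcal{C}(\iota_{L}^{*})\big)$ of two-coboundaries: that every velocity of an isotopy-generated path is a coboundary is the content of equation~\eqref{eq:exact} in Lemma~\ref{lem:first-order}~$(2)$, and that conversely every two-coboundary is so realized is Lemma~\ref{lem:unobstructed}~$(2)$.

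Combining the two identifications gives
\[
T_{[(\omega,L)]}\big(\mathcal{D}_{\mathcal{U}'}(\omega,L)/\sim\big)=\frac{Z^2\big(\mathcal{C}(\iota_{L}^{*})\big)}{B^2\big(\mathcal{C}(\iota_{L}^{*})\big)}=H^2\big(\mathcal{C}(\iota_{L}^{*})\big),
\]
as claimed. The only delicate point is conceptual rather than computational: one must fix the definition of the formal tangent space to the moduli space as the quotient of the two linear subspaces above and check that this is well posed. Here the role of unobstructedness (Lemma~\ref{lem:unobstructed}~$(1)$) is essential, as it guarantees that the tangent space of the deformation space is all of $Z^2$ and not merely a proper subspace cut out by surviving quadratic constraints; once the four statements of Lemmas~\ref{lem:first-order} and~\ref{lem:unobstructed} are in hand, no further obstacle remains and the identification is immediate.
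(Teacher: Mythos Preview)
Your proposal is correct and is precisely the argument the paper intends: the corollary is stated without proof immediately after Lemmas~\ref{lem:first-order} and~\ref{lem:unobstructed}, and the identification of $T_{[(\omega,L)]}\big(\mathcal{D}_{\mathcal{U}'}(\omega,L)/\sim\big)$ with $Z^2/B^2$ via those four statements is exactly what you have written out.
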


\begin{remark}
This shows in particular that the formal tangent space $T_{[(\omega,L)]}\big(\mathcal{D}_{\mathcal{U}'}(\omega,L)/\sim\big)$ is finite dimensional. Indeed, it is well-known that the mapping cone 	$\big(\mathcal{C}(\iota_{L}^{*}),d_{\iota_{L}^{*}}\big)$ fits in a short exact sequence of complexes
\[
\begin{tikzcd}
	0\arrow[r]&\big(\Omega^{\bullet-1}(L),-d\big)\arrow{r}{\beta\mapsto(0,\beta)}&\big(\mathcal{C}(\iota_{L}^{*})^{\bullet},d_{\iota_{L}^{*}}\big)\arrow{r}{(\eta,\beta)\mapsto \eta}&\big(\Omega^{\bullet}(M),d)\arrow[r]&0.
\end{tikzcd}
\]
We get a long exact sequence in cohomology
\[
\cdots\longrightarrow H^{k-1}(L)\longrightarrow H^{k}\big(\mathcal{C}(\iota_{L}^{*})\big)\longrightarrow H^{k}(M)\longrightarrow\cdots
\]
Since $L$ and $M$ are compact, we know that $H^{k-1}(L)$ and $H^{k}(M)$ are finite dimensional. The long exact sequence then implies that $H^{k}\big(\mathcal{C}(\iota_{L}^{*})\big)$ is finite dimensional as well.
\end{remark}

\subsubsection{The infinitesimal counterpart of $\text{Def}_{L}(\omega)/\sim$}

It is not hard to describe the formal tangent space $T_{[\omega]}\big(\text{Def}_{L}(\omega)/\sim\big)$. The relevant cochain complex is now the relative de Rham complex $\big(\Omega^{\bullet}(M,L),d\big)$, which is defined as follows. Given the inclusion $\iota_L:L\hookrightarrow M$, we set
\[
\Omega^{k}(M,L)=\big\{\alpha\in\Omega^{k}(M):i_{L}^{*}\alpha=0\big\}.
\]
We call the cohomology groups $H^{\bullet}(M,L)$ arising from $\big(\Omega^{\bullet}(M,L),d\big)$ the relative de Rham cohomology groups of $M$ with respect to $L$. The following result is straightforward.

\begin{lemma}
$(1)$ Let $\omega_t$ be a smooth path in $\text{Def}_{L}(\omega)$ starting at $\omega$. The corresponding infinitesimal deformation $\dot{\omega}_{0}$ is a closed element of $\Omega^{2}(M,L)$. Conversely, if $\eta\in\Omega^{2}(M,L)$ is closed then there exists a smooth path $\omega_t$ in $\text{Def}_{L}(\omega)$ starting at $\omega$ such that $\dot{\omega}_{0}=\eta$.

$(2)$ If a smooth path $\omega_t$ in $\text{Def}_{L}(\omega)$ starting at $\omega$ is generated by an isotopy preserving $L$, then $\dot{\omega}_{0}\in d\Omega^{1}(M,L)$. Conversely, if $\eta\in d\Omega^{1}(M,L)$, then there is a smooth path $\omega_t$ in $\text{Def}_{L}(\omega)$ starting at $\omega$ which is generated by an isotopy preserving $L$ and satisfies $\dot{\omega}_{0}=\eta$.
\end{lemma}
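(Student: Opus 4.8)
The plan is to treat the two items by differentiating the defining conditions of $\text{Def}_L(\omega)$ in the forward directions and by writing down explicit prolonging paths in the converse directions. For item $(1)$, the forward direction is immediate: every $\omega_t$ is closed and satisfies $\iota_L^*\omega_t=0$, so differentiating both identities at $t=0$ yields $d\dot\omega_0=0$ and $\iota_L^*\dot\omega_0=0$, i.e. $\dot\omega_0$ is a closed element of $\Omega^2(M,L)$. For the converse, given a closed $\eta\in\Omega^2(M,L)$ I would take the affine path $\omega_t:=\omega+t\eta$. Since $M$ is compact and nondegeneracy is a $\mathcal{C}^0$-open condition, $\omega_t$ remains symplectic for $t$ in a small interval $[0,\epsilon)$; it stays closed because $\omega$ and $\eta$ are, and $\iota_L^*\omega_t=\iota_L^*\omega+t\,\iota_L^*\eta=0$ keeps $L$ Lagrangian. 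Thus $\omega_t\in\text{Def}_L(\omega)$ with $\dot\omega_0=\eta$.

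For item $(2)$, I read ``generated by an isotopy $\rho_t$ preserving $L$'' exactly as in Lemma~\ref{lem:first-order}$(2)$: $\rho_0=\text{Id}$, $\rho_t(L)=L$, and $\rho_t^*\omega_t=\omega$. Letting $Y_0$ be the infinitesimal generator at $t=0$ and differentiating $\rho_t^*\omega_t=\omega$ gives $\pounds_{Y_0}\omega+\dot\omega_0=0$, hence $\dot\omega_0=-d\iota_{Y_0}\omega$ because $\omega$ is closed. It remains to see that $\iota_{Y_0}\omega\in\Omega^1(M,L)$. Since $\rho_t(L)=L$, the field $Y_0$ is tangent to $L$ along $L$; then for $p\in L$ and $X\in T_pL$ both $Y_0|_p$ and $X$ lie in $T_pL$, so $(\iota_{Y_0}\omega)(X)=\omega(Y_0,X)=0$ by the Lagrangian condition, giving $\iota_L^*(\iota_{Y_0}\omega)=0$. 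Therefore $\dot\omega_0\in d\Omega^1(M,L)$.

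For the converse of item $(2)$, write the given $\eta\in d\Omega^1(M,L)$ as $\eta=d\zeta$ with $\zeta\in\Omega^1(M,L)$, and define $Y\in\mathfrak{X}(M)$ by $\iota_Y\omega=-\zeta$, which is possible as $\omega$ is nondegenerate. The crucial step — and the one I expect to carry the conceptual weight — is to show that $Y$ is tangent to $L$: for $p\in L$ and $X\in T_pL$ we have $\omega(Y,X)=-\zeta(X)=0$ since $\iota_L^*\zeta=0$, so $Y|_p\in(T_pL)^{\omega}=T_pL$, the last equality being precisely the Lagrangian condition. Consequently the flow $\rho_t$ of $Y$ preserves $L$, and I would set $\omega_t:=(\rho_t^{-1})^*\omega$. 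This path lies in $\text{Def}_L(\omega)$ (it is symplectic as a pullback of a symplectic form, and $\iota_L^*\omega_t=(\rho_t^{-1}|_L)^*\iota_L^*\omega=0$ because $\rho_t^{-1}(L)=L$), it is generated by the $L$-preserving isotopy $\rho_t$ since $\rho_t^*\omega_t=\omega$, and $\dot\omega_0=-\pounds_Y\omega=-d\iota_Y\omega=d\zeta=\eta$. No smallness in $t$ is needed here, since pullbacks of $\omega$ are automatically symplectic. The main obstacle throughout is thus the repeated use of the Lagrangian condition $(T_pL)^{\omega}=T_pL$ to convert the relative vanishing $\iota_L^*\zeta=0$ into tangency of the associated vector field to $L$.
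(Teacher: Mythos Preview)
Your proof is correct and follows essentially the same approach as the paper: the affine path $\omega_t=\omega+t\eta$ for part~(1), and in part~(2) the computation $\dot\omega_0=-d\iota_{Y_0}\omega$ together with the Lagrangian identity $(T_pL)^{\omega}=T_pL$ to pass between $\iota_L^*\zeta=0$ and tangency of the associated vector field, then setting $\omega_t=(\rho_t^{-1})^*\omega$. The only cosmetic differences are that you spell out the tangency argument in slightly more detail and note explicitly that no smallness in $t$ is needed in the converse of~(2).
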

\begin{proof}
$(1)$ If $\omega_t$ is a smooth path in $\text{Def}_{L}(\omega)$, then $\omega_t$ is closed and belongs to $\Omega^{2}(M,L)$ for every $t$. Hence, the same holds for $\dot{\omega}_{0}$. Next, if $\eta\in\Omega^{2}(M,L)$ is closed then $\omega_t:=\omega+t\eta$ is symplectic for small enough $t$ and moreover $L$ is Lagrangian with respect to $\omega_t$. It follows that $\omega_t$ is a smooth path in $\text{Def}_{L}(\omega)$ starting at $\omega$ such that $\dot{\omega}_{0}=\eta$.

$(2)$ By assumption, we have that $\phi_{t}^{*}\omega_t=\omega$ where  $\phi_t$ is an isotopy whose time dependent vector field $Y_t$ is tangent to $L$. Differentiating at time $t=0$, we get
\[
\dot{\omega}_{0}=-d\iota_{Y_0}\omega.
\]
Here $\iota_{Y_0}\omega\in\Omega^{1}(M,L)$ since $\omega\in\Omega^{2}(M,L)$ and $Y_0$ is tangent to $L$. Hence $\dot{\omega}_{0}\in d\Omega^{1}(M,L)$. Conversely, take a coboundary $d\gamma$ with $\gamma\in\Omega^{1}(M,L)$. Let $\phi_t$ be the flow of the vector field $Y\in\mathfrak{X}(M)$ determined by $\iota_{Y}\omega=-\gamma$. The fact that $\gamma\in\Omega^{1}(M,L)$ implies that $Y$ is tangent to $L$ because 
\[
Y|_{L}\in\Gamma(TL^{\omega})=\Gamma(TL).
\]
So the isotopy $\phi_t$ preserves $L$. The path $\omega_t:=(\phi_{t}^{-1})^{*}\omega$ generated by $\phi_t$ satisfies
\[
\dot{\omega}_{0}=-d\iota_{Y}\omega=d\gamma. \qedhere
\]
\end{proof}

\begin{cor}\label{cor:coho2}
	The formal tangent space $T_{[\omega]}\big(\text{Def}_{L}(\omega)/\sim\big)$ is given by $H^{2}(M,L)$.
\end{cor}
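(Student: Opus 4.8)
The plan is to read off the statement directly from the two parts of the Lemma immediately preceding it, which together pin down the numerator and the denominator of the quotient defining the formal tangent space. By definition, $T_{[\omega]}\big(\text{Def}_{L}(\omega)/\sim\big)$ is the space of first-order deformations of $\omega$ inside $\text{Def}_{L}(\omega)$ (the velocity vectors $\dot{\omega}_0$ of smooth paths in $\text{Def}_{L}(\omega)$ based at $\omega$), modulo the subspace of those velocity vectors that point along the equivalence class of $\omega$, that is, those realized by paths generated by isotopies preserving $L$. So the whole task is to identify each of these two spaces explicitly and take the quotient.

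First I would invoke part $(1)$ of the Lemma, which states that the infinitesimal deformations $\dot{\omega}_0$ realized by smooth paths in $\text{Def}_{L}(\omega)$ starting at $\omega$ are \emph{exactly} the closed elements of $\Omega^{2}(M,L)$. Both directions are needed here: every such velocity is a closed relative $2$-form, and conversely every closed relative $2$-form is realized (via the linear path $\omega_t = \omega + t\eta$). This identifies the space of first-order deformations with the space of $2$-cocycles of the relative complex $\big(\Omega^{\bullet}(M,L),d\big)$. Next I would invoke part $(2)$, again in both directions, which identifies the first-order deformations arising from isotopies preserving $L$ precisely with the coboundaries $d\Omega^{1}(M,L)$. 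Thus the subspace of trivial directions is exactly the full space of relative $2$-coboundaries.

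The conclusion is then immediate: the formal tangent space is the quotient of the closed relative $2$-forms by $d\Omega^{1}(M,L)$, which is by definition $H^{2}(M,L)$. There is no genuine analytic obstacle at this stage, since all the work has been carried out in the Lemma; the only point requiring care is the bookkeeping of which inclusions supply the numerator and which supply the denominator. In particular, the two converse statements in the Lemma are what guarantee that the quotient is neither too large (no coboundary survives, by the converse in $(2)$) nor too small (every cohomology class is realized by an honest path, by the converse in $(1)$), so that the quotient of velocity vectors coincides exactly with $H^{2}(M,L)$ rather than merely injecting into or surjecting onto it.
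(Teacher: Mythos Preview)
Your proposal is correct and follows exactly the paper's intended approach: the corollary is stated without proof in the paper precisely because both directions of parts $(1)$ and $(2)$ of the preceding Lemma immediately identify the first-order deformations with the closed relative $2$-forms and the trivial ones with $d\Omega^{1}(M,L)$, yielding $H^{2}(M,L)$ as their quotient.
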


\begin{remark}
This shows in particular that the formal tangent space $T_{[\omega]}\big(\text{Def}_{L}(\omega)/\sim\big)$ is finite dimensional. Indeed, the relative de Rham complex $\big(\Omega^{\bullet}(M,L),d\big)$ fits in a short exact sequence of complexes
\[
0\longrightarrow \big(\Omega^{\bullet}(M,L),d\big)\longrightarrow \big(\Omega^{\bullet}(M),d\big)\overset{i_{L}^{*}}{\longrightarrow}\big(\Omega^{\bullet}(L),d\big)\longrightarrow 0,
\]
which induces a long exact sequence in cohomology
\begin{equation}\label{eq:long-ex}
	\cdots \longrightarrow H^{k-1}(L)\longrightarrow H^{k}(M,L)\longrightarrow H^{k}(M)\longrightarrow \cdots.
\end{equation}
Since $L$ and $M$ are compact, we know that $H^{k-1}(L)$ and $H^{k}(M)$ are finite dimensional. The long exact sequence then implies that $H^{k}(M,L)$ is finite dimensional as well. 
\end{remark}

\subsubsection{A linear isomorphism}
We will now prove that the formal linearization 
\[
d\overline{q}:T_{[(\omega,L)]}\big(\mathcal{D}_{\mathcal{U}'}(\omega,L)/\sim\big)\rightarrow T_{[\omega]}\big(\text{Def}_{L}(\omega)/\sim\big)
\]
is a linear isomorphism. We will proceed by first recalling a canonical isomorphism between the cohomology groups $H^{2}\big(\mathcal{C}(\iota_{L}^{*})\big)$ and $H^{2}(M,L)$. We will then show that this isomorphism is realized by the map $d\overline{q}$ under the identifications established in Cor.\,\ref{cor:coho1} and Cor.\,\ref{cor:coho2}.

\vspace{0.2cm}

It is well-known that for a surjective cochain map $\Phi:(A^{\bullet},d_A)\rightarrow(B^{\bullet},d_B)$, the natural inclusion $\big(\ker\Phi,d_A\big)\hookrightarrow\big(\mathcal{C}(\Phi),d_{\Phi}\big)$ is a quasi-isomorphism \cite[Lemma\,5.2.4]{Manetti}. Applying this fact to the pullback  $\iota_{L}^{*}:\big(\Omega^{\bullet}(M),d\big)\rightarrow\big(\Omega^{\bullet}(L),d\big)$ gives an isomorphism
\begin{equation*}
I:H^{k}(M,L)\overset{\sim}{\longrightarrow}H^{k}\big(\mathcal{C}(\iota_{L}^{*})\big):[\eta]\mapsto [(\eta,0)].
\end{equation*}
We will need the inverse of this isomorphism, which was already hinted at in equation \eqref{eq:dec-inf}.

\begin{lemma}
The inverse of the isomorphism $I$ is given by
\[
J:H^{k}\big(\mathcal{C}(\iota_{L}^{*})\big)\rightarrow H^{k}(M,L):[(\eta,\beta)]\mapsto[\eta-d\widetilde{\beta}].
\]
Here $\widetilde{\beta}\in\Omega^{k-1}(M)$ is any extension of $\beta\in\Omega^{k-1}(L)$.
\end{lemma}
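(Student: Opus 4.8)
The plan is to exploit that $I$ is already known to be an isomorphism: once I verify that $J$ is a well-defined map on cohomology and that $J\circ I=\mathrm{id}$, the relation $J=I^{-1}$ follows formally by composing with $I^{-1}$ on the right. So the real content lies in establishing that $J$ descends to cohomology, after which the identity $J\circ I=\mathrm{id}$ is immediate.

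To set up the computation I would first unpack the cocycle condition in the mapping cone. For $(\eta,\beta)\in\Omega^{k}(M)\oplus\Omega^{k-1}(L)$, being closed means $d_{\iota_{L}^{*}}(\eta,\beta)=(d\eta,\iota_{L}^{*}\eta-d\beta)=0$, i.e.\ $d\eta=0$ and $\iota_{L}^{*}\eta=d\beta$. Fixing any extension $\widetilde{\beta}\in\Omega^{k-1}(M)$ of $\beta$, the candidate value $\eta-d\widetilde{\beta}$ is then a genuine cocycle of the relative complex: it is closed since $d\eta=0$, and it lies in $\Omega^{k}(M,L)$ because $\iota_{L}^{*}(\eta-d\widetilde{\beta})=\iota_{L}^{*}\eta-d\,\iota_{L}^{*}\widetilde{\beta}=d\beta-d\beta=0$, using that $\iota_{L}^{*}$ commutes with $d$ and that $\iota_{L}^{*}\widetilde{\beta}=\beta$.

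Next I would check independence from the two choices involved. For the extension: two extensions $\widetilde{\beta}_1,\widetilde{\beta}_2$ of $\beta$ differ by an element of $\Omega^{k-1}(M,L)$, so $(\eta-d\widetilde{\beta}_1)-(\eta-d\widetilde{\beta}_2)=d(\widetilde{\beta}_2-\widetilde{\beta}_1)$ is exact in the relative complex, whence the class $[\eta-d\widetilde{\beta}]\in H^{k}(M,L)$ is unchanged. For the representative: if $(\eta',\beta')=(\eta,\beta)+d_{\iota_{L}^{*}}(\zeta,\gamma)=(\eta+d\zeta,\beta+\iota_{L}^{*}\zeta-d\gamma)$ with $(\zeta,\gamma)\in\Omega^{k-1}(M)\oplus\Omega^{k-2}(L)$, I would pick an extension $\widetilde{\gamma}\in\Omega^{k-2}(M)$ of $\gamma$ and take $\widetilde{\beta}+\zeta-d\widetilde{\gamma}$ as an extension of $\beta'$ (one checks $\iota_{L}^{*}(\widetilde{\beta}+\zeta-d\widetilde{\gamma})=\beta+\iota_{L}^{*}\zeta-d\gamma=\beta'$). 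Then $\eta'-d(\widetilde{\beta}+\zeta-d\widetilde{\gamma})=\eta+d\zeta-d\widetilde{\beta}-d\zeta=\eta-d\widetilde{\beta}$ on the nose, using $d^{2}=0$. Combined with the extension-independence just established, this shows $J$ is well defined on $H^{k}(\mathcal{C}(\iota_{L}^{*}))$.

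Finally, $J\circ I$ sends $[\eta]\in H^{k}(M,L)$ to $J([(\eta,0)])=[\eta]$, taking the zero form as extension of $0$. Hence $J\circ I=\mathrm{id}$, and since $I$ is an isomorphism we conclude $J=I^{-1}$. I expect the only mildly delicate point to be the representative-independence check, where one must exhibit a compatible extension of the shifted boundary datum $\beta'$ so that the correction terms cancel against $d^{2}\widetilde{\gamma}=0$; everything else is a direct unwinding of definitions.
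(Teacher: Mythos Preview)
Your proof is correct and follows essentially the same approach as the paper: verify that $\eta-d\widetilde{\beta}$ is a relative cocycle, check independence of the extension and of the cocycle representative, and then compute $J\circ I=\mathrm{id}$. The only cosmetic differences are that the paper first reduces every cone coboundary to one of the form $d_{\iota_{L}^{*}}(\zeta,0)$ before checking it maps to zero, and it verifies both $J\circ I$ and $I\circ J$ rather than invoking that $I$ is already an isomorphism; your direct treatment of a general coboundary and one-sided inverse argument are equally valid and slightly more economical.
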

\begin{proof}
We first show that $J$ is well-defined. If $(\eta,\beta)$ is a $k$-cocycle in $\big(\mathcal{C}(\iota_{L}^{*}),d_{\iota_{L}^{*}}\big)$, then
\[
\begin{cases}
	d\eta=0\\
	\iota_{L}^{*}\eta=d\beta
\end{cases}.
\]
Hence for any extension $\widetilde{\beta}$ of $\beta$, the form $\eta-d\widetilde{\beta}$ is closed and its pullback to $L$ vanishes. So it defines a class in $H^{k}(M,L)$. This class does not depend on the choice of extension $\widetilde{\beta}$. Indeed, if $\widetilde{\beta}_1$ and $\widetilde{\beta}_2$ are two extensions of $\beta$ then
\[
(\eta-d\widetilde{\beta}_1)-(\eta-d\widetilde{\beta}_2)=d(\widetilde{\beta}_2-\widetilde{\beta}_1),
\]
where $\widetilde{\beta}_2-\widetilde{\beta}_1\in\Omega^{k-1}(M,L)$. Hence $\eta-d\widetilde{\beta}_1$ and $\eta-d\widetilde{\beta}_2$ define the same class in $H^{k}(M,L)$.
So we get a well-defined map
\begin{equation}\label{eq:map}
\mathcal{C}^{k}(\iota_{L}^{*})_{closed}\rightarrow H^{k}(M,L):(\eta,\beta)\mapsto[\eta-d\widetilde{\beta}].
\end{equation}
To check that it descends to $H^{k}\big(\mathcal{C}(\iota_{L}^{*})\big)$, we show that $k$-coboundaries are mapped to zero. By equation  \eqref{eq:coboundaries}, a $k$-coboundary is necessarily of the form
\[
d_{\iota_{L}^{*}}(\zeta,0)=(d\zeta,\iota_{L}^{*}\zeta)
\]
for some $\zeta\in\Omega^{k-1}(M)$. An extension of $\iota_{L}^{*}\zeta\in\Omega^{k-1}(L)$ is simply given by $\zeta$, hence it is clear that $(d\zeta,\iota_{L}^{*}\zeta)$ gets mapped to zero by the map \eqref{eq:map}. It follows that $J$ is well-defined.

It remains to check that $I$ and $J$ are inverses. It is clear that $J\circ I$ is the identity map. On the other hand, the composition $I\circ J$ is given by
\[
I\circ J:H^{k}\big(\mathcal{C}(\iota_{L}^{*})\big)\rightarrow H^{k}\big(\mathcal{C}(\iota_{L}^{*})\big):[(\eta,\beta)]\mapsto[(\eta-d\widetilde{\beta},0)].
\]
We already remarked in \eqref{eq:dec-inf} that a cocycle $(\eta,\beta)$ is cohomologous with $(\eta-d\widetilde{\beta},0)$ since
\[
(\eta,\beta)=(\eta-d\widetilde{\beta},0)+(d\widetilde{\beta},\beta)=(\eta-d\widetilde{\beta},0)+d_{\iota_{L}^{*}}(\widetilde{\beta},0).
\]
This shows that $I\circ J$ is also the identity map. Hence $I$ and $J$ are inverses of each other.
\end{proof}

The isomorphism $J:H^{2}\big(\mathcal{C}(\iota_{L}^{*})\big)\rightarrow H^{2}(M,L)$ is in fact realized by the formal linearization of $\overline{q}$ at the class $[(\omega,L)]\in\mathcal{D}_{\mathcal{U}'}(\omega,L)/\sim$. In particular, the latter is also an isomorphism.

\begin{prop}
We have a commutative diagram
\[
\begin{tikzcd}
	&T_{[(\omega,L)]}\big(\mathcal{D}_{\mathcal{U}'}(\omega,L)/\sim\big)\arrow[r,"d\overline{q}"]\arrow[d,"\simeq",swap] & T_{[\omega]}\big(\text{Def}_{L}(\omega)/\sim\big)\arrow[d,"\simeq"]\\
	&H^{2}\big(\mathcal{C}(\iota_{L}^{*})\big)\arrow[r,"J"] &H^{2}(M,L) 
\end{tikzcd}.
\]
\end{prop}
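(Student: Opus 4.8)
The plan is to unwind the two identifications $T_{[(\omega,L)]}\big(\mathcal{D}_{\mathcal{U}'}(\omega,L)/\sim\big)\cong H^{2}\big(\mathcal{C}(\iota_{L}^{*})\big)$ and $T_{[\omega]}\big(\text{Def}_{L}(\omega)/\sim\big)\cong H^{2}(M,L)$ furnished by Cor.~\ref{cor:coho1} and Cor.~\ref{cor:coho2}, and to verify on a representative path that $d\overline{q}$ becomes the map $J$. Concretely, a class on the left is represented by the infinitesimal deformation $(\dot{\omega}_0,\dot{\alpha}_0)$ of a smooth path $(\omega_t,L_t)$ in $\mathcal{D}_{\mathcal{U}'}(\omega,L)$ starting at $(\omega,L)$, where $\alpha_t\in\Gamma_{V'}(T^{*}L)$ is the section corresponding to $L_t$. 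By definition of $\overline{q}$ the image path is $t\mapsto\phi_{L_t}^{*}\omega_t$ in $\text{Def}_{L}(\omega)$, so $d\overline{q}$ sends the source class to the class of $\frac{d}{dt}\big|_{t=0}\phi_{L_t}^{*}\omega_t$ in $H^{2}(M,L)$. Everything then reduces to computing this derivative and matching it against $J\big([(\dot{\omega}_0,\dot{\alpha}_0)]\big)=[\dot{\omega}_0-d\widetilde{\dot{\alpha}_0}]$.

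For the computation I would first split the derivative by the Leibniz rule. Since $\phi_{L_0}=\text{Id}$ (Def.~\ref{def:phi}) and $\omega_0=\omega$, setting $W:=\frac{d}{dt}\big|_{t=0}\phi_{L_t}\in\mathfrak{X}(M)$ gives
\[
\frac{d}{dt}\Big|_{t=0}\phi_{L_t}^{*}\omega_t=\pounds_{W}\omega+\dot{\omega}_0=d\iota_{W}\omega+\dot{\omega}_0.
\]
The heart of the matter is then to identify $-\iota_{W}\omega$ as an extension to $M$ of $\dot{\alpha}_0\in\Omega^{1}(L)$, i.e.\ to prove $\iota_{L}^{*}(\iota_{W}\omega)=-\dot{\alpha}_0$. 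This is exactly the type of identity established in Lemma~\ref{lem:first-order}(2): because $\phi_{L_t}(L)=L_t=\text{graph}(\alpha_t)$, the family $(\phi_{L_t})$ is an isotopy with initial velocity $W$ moving $L$ along the sections $\alpha_t$, so the argument there (via \cite[Lemma~3.13]{equivalences}) yields $\dot{\alpha}_0=-\iota_{L}^{*}(\iota_{W}\omega)$. I would make this self-contained by computing $W$ directly: since $X_{L_0}=0$, the linearization of the time-$1$-flow map at the zero vector field is the identity, whence $W=\frac{d}{dt}\big|_{t=0}X_{L_t}=f\,\psi_{*}\big(X_{\dot{\alpha}_0}\big)$ by the linearity of $\sigma\mapsto X_{\sigma}$ from Rem.~\ref{rem:corr}. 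Near $L$ one has $f\equiv1$ and $\psi^{*}\omega=\omega_{can}$, so $\iota_{W}\omega=\psi_{*}\big(\iota_{X_{\dot{\alpha}_0}}\omega_{can}\big)=\psi_{*}\big(-p^{*}\dot{\alpha}_0\big)$; pulling back along $\iota_{L}$ and using that $\psi$ restricts to the identity on $L$ and that $p$ restricts to the identity on the zero section gives $\iota_{L}^{*}(\iota_{W}\omega)=-\dot{\alpha}_0$.

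With this identity in hand the proof concludes immediately: $\widetilde{\dot{\alpha}_0}:=-\iota_{W}\omega$ is an extension of $\dot{\alpha}_0$, so
\[
\frac{d}{dt}\Big|_{t=0}\phi_{L_t}^{*}\omega_t=\dot{\omega}_0-d\widetilde{\dot{\alpha}_0},
\]
which is precisely a representative of $J\big([(\dot{\omega}_0,\dot{\alpha}_0)]\big)$; this is exactly the commutativity of the square (and, since $J$ is an isomorphism, it simultaneously confirms that $d\overline{q}$ is well-defined and bijective). The main obstacle is the middle step, namely pinning down the velocity field $W$ of the family $(\phi_{L_t})$ and verifying $\iota_{L}^{*}(\iota_{W}\omega)=-\dot{\alpha}_0$. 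Once one observes that the cutoff $f$ is irrelevant on $L$ and that $\psi$ intertwines $\omega$ with $\omega_{can}$, this is the same first-order computation already carried out in Lemma~\ref{lem:first-order}(2), so beyond bookkeeping the linearization of the flow map no genuinely new difficulty arises.
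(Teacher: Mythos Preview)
Your proposal is correct and follows essentially the same approach as the paper's proof: both compute $\frac{d}{dt}\big|_{t=0}\phi_{L_t}^{*}\omega_t=d\iota_{Y_0}\omega+\dot\omega_0$ with $Y_0=\frac{d}{dt}\big|_{t=0}\phi_{L_t}$, identify $Y_0=f\,\psi_*(X_{\dot\alpha_0})$ via the linearization of the time-$1$ flow map at the zero vector field, and then use $f|_L\equiv 1$, $\psi^*\omega=\omega_{can}$ and Rem.~\ref{rem:corr} to conclude $\iota_L^*(\iota_{Y_0}\omega)=-\dot\alpha_0$. The only cosmetic difference is that the paper transfers to the $T^{*}L$ side via $Z_0=(\psi^{-1})_*Y_0$ before invoking Rem.~\ref{rem:corr}, whereas you stay on $M$ and use $\psi_*$; the content is identical.
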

\begin{proof}
Take a smooth path $(\omega_t,L_t)$ in $\mathcal{D}_{\mathcal{U}'}(\omega,L)$ starting at $(\omega,L)$. Denote by $\alpha_t$ the path in $\Gamma_{V'}(T^{*}L)$ corresponding with $L_t$. We need to prove the following equality in $H^{2}(M,L)$:
\begin{equation}\label{eq:prove}
\left[\left.\frac{d}{dt}\right|_{t=0}\phi_{L_t}^{*}\omega_t\right]=\left[\dot{\omega}_0-d\widetilde{\dot{\alpha}_0}\right]
\end{equation}
Denote by $Y_t$ the time-dependent vector field of the isotopy $\phi_{L_t}$. We then have
\[
\left.\frac{d}{dt}\right|_{t=0}\phi_{L_t}^{*}\omega_t=d\iota_{Y_0}\omega+\dot{\omega}_0.
\]
Hence, the equality \eqref{eq:prove} follows if we show that $\iota_{Y_0}\omega+\widetilde{\dot{\alpha}_0}\in\Omega^{1}(M,L)$. That is to say,
\[
\dot{\alpha}_0=-\iota_{L}^{*}(\iota_{Y_0}\omega).
\]
To prove this equality, we use that the symplectomorphism $\psi:(V,\omega_{can})\rightarrow(U,\omega)$ restricts to the identity on $L$. Denoting by $Z_0:=(\psi^{-1})_{*}Y_0$, it then suffices to prove that
\begin{equation}\label{eq:sufficient}
\dot{\alpha}_0=-\iota_{L}^{*}(\iota_{Z_0}\omega_{can}).
\end{equation}

We now make the vector field $Z_0$ more explicit. Recall that $\phi_{L_t}$ is the flow at time $1$ of the vector field $X_{L_t}:=f\psi_{*}(X_{\alpha_t})\in\mathfrak{X}(M)$. It will be beneficial to rephrase this as 
\[
\phi_{L_t}=\exp(X_{L_t}),
\]
where $\exp:\mathfrak{X}(M)\rightarrow\text{Diff}(M)$ denotes the exponential map of the infinite dimensional Lie group $\text{Diff}(M)$. Since the derivative $d_{0}\exp$ is the identity map, it follows that
\[
Y_0=\left.\frac{d}{dt}\right|_{t=0}\phi_{L_t}=\left.\frac{d}{dt}\right|_{t=0}\exp(X_{L_t})=(d_{0}\exp)\left(\left.\frac{d}{dt}\right|_{t=0}X_{L_t}\right)=f\psi_{*}(X_{\dot{\alpha}_0}).
\]
This implies that the vector field $Z_0=(\psi^{-1})_{*}Y_0$ is given by $Z_0=\psi^{*}(f)X_{\dot{\alpha}_0}$. The equality \eqref{eq:sufficient} now follows immediately. Since $\psi^{*}(f)$ is identically $1$ along $L$, we get
\[
-\iota_{L}^{*}(\iota_{Z_0}\omega_{can})=-\iota_{L}^{*}(\iota_{X_{\dot{\alpha}_0}}\omega_{can})=\dot{\alpha}_0.
\]
In the last equality, we used again the correspondence from Rem.\,\ref{rem:corr} between vertical, fiberwise constant vector fields on $T^{*}L$ and sections of $T^{*}L$. This finishes the proof.
\end{proof}

\subsection{The geometric equivalence}
The fact that the formal linearization of $\overline{q}$ is an isomorphism suggests that $\overline{q}$ should be a local equivalence between the moduli spaces $\mathcal{D}_{\mathcal{U}'}(\omega,L)/\sim$ and $\text{Def}_{L}(\omega)/\sim$. We now confirm that this is indeed the case.
	
\begin{prop}\label{prop:bijection}
We have a bijection
\[
\overline{q}:\frac{\mathcal{D}_{\mathcal{U}'}(\omega,L)}{\sim}\rightarrow\frac{\text{Def}_{L}(\omega)}{\sim}:[(\omega',L')]\mapsto\big[\phi_{L'}^{*}\omega'\big].
\]
\end{prop}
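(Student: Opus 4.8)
The plan is to prove surjectivity and injectivity of $\overline{q}$ separately, with surjectivity essentially immediate and injectivity carrying all the geometric content. For surjectivity, given $\omega'\in\text{Def}_{L}(\omega)$, note that $L$ corresponds to the zero section of $T^{*}L$, which lies in $\Gamma_{V'}(T^{*}L)$; hence $L\in\mathcal{U}'$ and $(\omega',L)\in\mathcal{D}_{\mathcal{U}'}(\omega,L)$. Since $\phi_{L}=\text{Id}$ by Definition~\ref{def:phi}, we get $\overline{q}([(\omega',L)])=[\phi_{L}^{*}\omega']=[\omega']$, so $\overline{q}$ is onto.

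The heart of the argument is a normalization step: every pair $(\omega',L')\in\mathcal{D}_{\mathcal{U}'}(\omega,L)$ is equivalent in $\mathcal{D}_{\mathcal{U}'}(\omega,L)$ to the straightened pair $(\phi_{L'}^{*}\omega',L)$. To establish this I would use the isotopy $(\phi^{t}_{X_{L'}})_{t\in[0,1]}$ whose time-$1$ map is $\phi_{L'}$: this map sends $L$ to $L'$ and pulls $\omega'$ back to $\phi_{L'}^{*}\omega'$, so it is the natural candidate witnessing $(\phi_{L'}^{*}\omega',L)\sim(\omega',L')$ in the sense of Definition~\ref{def:equiv}~(ii). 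The only thing to verify is that the intermediate submanifolds $\phi^{t}_{X_{L'}}(L)$ remain in $\mathcal{U}'$. This is exactly where fiberwise convexity of $V'$ enters: since $f\equiv 1$ near $L$, the field $X_{L'}$ agrees with $\psi_{*}(X_{\sigma})$ on $U'$, whose flow carries $L$ to the submanifold corresponding to the section $t\sigma$, and $t\sigma\in\Gamma_{V'}(T^{*}L)$ for all $t\in[0,1]$ by convexity. This mirrors the bullet-point computations in the proof of Lemma~\ref{lem:well-defined}.

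With normalization in hand, suppose $\overline{q}([(\omega'_1,L'_1)])=\overline{q}([(\omega'_2,L'_2)])$, i.e. the forms $\eta_1:=\phi_{L'_1}^{*}\omega'_1$ and $\eta_2:=\phi_{L'_2}^{*}\omega'_2$ are equivalent in $\text{Def}_{L}(\omega)$ via an isotopy $(\rho_t)$ with $\rho_1^{*}\eta_2=\eta_1$ and $\rho_t(L)=L$ for all $t$. Because $\rho_t$ fixes $L$ and $L\in\mathcal{U}'$, this same isotopy directly witnesses $(\eta_1,L)\sim(\eta_2,L)$ in $\mathcal{D}_{\mathcal{U}'}(\omega,L)$. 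Combining with the two normalization equivalences $(\omega'_i,L'_i)\sim(\eta_i,L)$ and transitivity of $\sim$ yields $(\omega'_1,L'_1)\sim(\omega'_2,L'_2)$, hence injectivity. Concretely this amounts to concatenating the three isotopies $\phi^{t}_{X_{L'_1}}$, $\rho_t$ and $\phi^{t}_{X_{L'_2}}$ (reparametrized and smoothed at the junctions via a function like $h$ in Lemma~\ref{lem:well-defined}), producing an isotopy from the identity to $\phi_{L'_2}\circ\rho_1\circ\phi_{L'_1}^{-1}$ that keeps the image of $L'_1$ inside $\mathcal{U}'$ throughout.

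The main obstacle is the normalization step, and more precisely the bookkeeping that guarantees every intermediate submanifold stays in the chart $\mathcal{U}'$: this needs both fiberwise convexity of $V'$ (to control the straightening flows) and careful concatenation with smoothing at the junction times (to invoke transitivity of $\sim$); everything else is formal. I would also emphasize that this construction is essentially the reverse of the one in Lemma~\ref{lem:well-defined}: there one passed from an equivalence in $\mathcal{D}_{\mathcal{U}'}(\omega,L)$ down to one in $\text{Def}_{L}(\omega)$, whereas here we lift an equivalence in $\text{Def}_{L}(\omega)$ back up to $\mathcal{D}_{\mathcal{U}'}(\omega,L)$, so the two proofs share the same technical core.
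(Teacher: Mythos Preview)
Your proposal is correct and follows essentially the same route as the paper: both prove surjectivity via $(\omega',L)\mapsto[\omega']$ and prove injectivity by concatenating the flows $\phi^{t}_{X_{L'_1}}$, the isotopy fixing $L$, and $\phi^{t}_{X_{L'_2}}$ (smoothed at the junctions), using fiberwise convexity of $V'$ to keep intermediate submanifolds in $\mathcal{U}'$. Your packaging via the normalization step $(\omega',L')\sim(\phi_{L'}^{*}\omega',L)$ together with transitivity of $\sim$ is slightly more conceptual than the paper's explicit five-piece isotopy, but the underlying construction is identical.
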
	
\begin{proof}
Surjectivity is clear, since $[\omega']\in\text{Def}_{L}(\omega)/\sim$ is the image of $[(\omega',L)]\in\mathcal{D}_{\mathcal{U}'}(\omega,L)/\sim$.
For injectivity, assume that $(\omega',L'),(\omega'',L'')\in\mathcal{D}_{\mathcal{U}'}(\omega,L)$ are such that $\phi_{L'}^{*}\omega'$ and $\phi_{L''}^{*}\omega''$ are equivalent in $\text{Def}_{L}(\omega)$. Then there exists an isotopy $(\psi_t)_{t\in[0,1]}$ such that 
\[
(\psi_1)^{*}\big(\phi_{L''}^{*}\omega''\big)=\phi_{L'}^{*}\omega'\hspace{0.5cm}\text{and}\hspace{0.5cm}\psi_t(L)=L\ \text{for all}\ t\in[0,1].
\]
We will construct an isotopy $(\rho_t)_{t\in[0,1]}$ connecting $\rho_0=\text{Id}$ and $\rho_1=\phi_{L''}\circ\psi_1\circ\phi_{L'}^{-1}$ such that additionally $\rho_t(L')\in\mathcal{U}'$ for all $t\in[0,1]$. This will imply that $(\omega',L')$ and $(\omega'',L'')$ are equivalent in $\mathcal{D}_{\mathcal{U}'}(\omega,L)$, because
\[
\rho_1(L')=\big(\phi_{L''}\circ\psi_1\circ\phi_{L'}^{-1}\big)(L')=L''
\]
and
\[
(\rho_1)^{*}\omega''=\big(\phi_{L'}^{-1}\big)^{*}\left((\psi_1)^{*}\big(\phi_{L''}^{*}\omega''\big)\right)=\big(\phi_{L'}^{-1}\big)^{*}(\phi_{L'}^{*}\omega')=\omega'.
\]
We can proceed as in the proof of Lemma\,\ref{lem:well-defined}. Consider again the same smooth function $h:[0,1]\rightarrow[0,1]$ satisfying
\[
\begin{cases}
	h(0)=0,\\
	\left.h\right|_{\left]0,\frac{1}{5}\right[}:\left]0,\frac{1}{5}\right[\rightarrow h\left(\left]0,\frac{1}{5}\right[\right)\ \text{is a diffeomorphism},\\
	\left.h\right|_{[\frac{1}{5},\frac{2}{5}]}\equiv\frac{1}{3},\\
	\left.h\right|_{\left]\frac{2}{5},\frac{3}{5}\right[}:\left]\frac{2}{5},\frac{3}{5}\right[\rightarrow h\left(\left]\frac{2}{5},\frac{3}{5}\right[\right)\ \text{is a diffeomorphism},\\
	\left.h\right|_{[\frac{3}{5},\frac{4}{5}]}\equiv\frac{2}{3},\\
	\left.h\right|_{\left]\frac{4}{5},1\right[}:\left]\frac{4}{5},1\right[\rightarrow h\left(\left]\frac{4}{5},1\right[\right)\ \text{is a diffeomorphism},\\
	h(1)=1.
\end{cases}
\]
Recall from Def.\,\ref{def:phi} that $\phi_{L'}$ is the time $1$-flow of the vector field $X_{L'}$. Let us denote this flow by $\phi^{t}_{X_{L'}}$. Similarly, we have that $\phi_{L''}$ is the flow $\phi^{t}_{X_{L''}}$ at time $1$. We now define
\begin{align*}
	\rho_t=\begin{cases}
		\phi^{-3h(t)}_{X_{L'}}\ \ \ \ &\text{if}\ 0\leq t\leq\frac{1}{5},\\
		\phi^{-1}_{X_{L'}}\ \ &\text{if}\ \frac{1}{5}\leq t\leq \frac{2}{5},\\
		\psi_{(3h(t)-1)}\circ\phi^{-1}_{X_{L'}} &\text{if}\ \frac{2}{5}\leq t\leq \frac{3}{5},\\
		\psi_{1}\circ\phi^{-1}_{X_{L'}} &\text{if}\ \frac{3}{5}\leq t\leq \frac{4}{5},\\
		\phi^{(3h(t)-2)}_{X_{L''}}\circ\psi_{1}\circ\phi^{-1}_{X_{L'}}&\text{if}\ \frac{4}{5}\leq t\leq 1.
	\end{cases}
\end{align*}
Then $\rho_t$ is smooth in $t$ and it connects $\rho_0=\text{Id}$ with $\rho_1=\phi^{1}_{X_{L''}}\circ\psi_{1}\circ\phi^{-1}_{X_{L'}}=\phi_{L''}\circ\psi_1\circ\phi_{L'}^{-1}$. We now check that $\rho_t(L')\in\mathcal{U}'$ for all $t\in[0,1]$. So we have to check that $\rho_t(L')$ corresponds with a section in $\Gamma_{V'}(T^{*}L)$. As $L',L''\in\mathcal{U}'$, they correspond with sections $\alpha',\alpha''\in\Gamma_{V'}(T^{*}L)$. 
\begin{itemize}
	\item For $0\leq t\leq\frac{1}{5}$, we have that $\rho_t(L')$ corresponds with $(1-3h(t))\alpha'\in\Gamma_{V'}(T^{*}L)$.
	\item For $\frac{1}{5}\leq t\leq\frac{2}{5}$, we have that $\rho_t(L')$ corresponds with the zero section $0\in\Gamma_{V'}(T^{*}L)$.
	\item For $\frac{2}{5}\leq t\leq\frac{3}{5}$, we have that $0\leq 3h(t)-1\leq 1$. By construction, we have that $\psi_{s}(L)=L$ for all $0\leq s\leq 1$. It follows that $\rho_t(L')=\psi_{(3h(t)-1)}(L)=L$ corresponds with the zero section $0\in\Gamma_{V'}(T^{*}L)$.
	\item For $\frac{3}{5}\leq t\leq\frac{4}{5}$, we still have that $\rho_t(L')=\psi_1(L)=L$ corresponds with the zero section $0\in\Gamma_{V'}(T^{*}L)$.
	\item For $\frac{4}{5}\leq t\leq 1$, we have that $\rho_t(L)$ corresponds with $(3h(t)-2)\alpha''\in\Gamma_{V'}(T^{*}L)$.
\end{itemize}
In the first and last bullet point, we used that $V'\subset T^{*}L$ is fiberwise convex to conclude that the scalar multiples of $\alpha'$ and $\alpha''$ still take values in $V'$. The proof is now finished.
\end{proof}

\begin{remark}
A loose interpretation of the correspondence in Prop.~\ref{prop:bijection} is the following. Let $\text{Diff}(M)$ be the diffeomorphism group of $M$, which acts on the space of pairs $\mathcal{D}(\omega,L)$. The subgroup $\text{Diff}_{L}(M)$ of diffeomorphisms preserving $L$ acts on $\text{Def}_{L}(\omega)$. One can think of $\text{Def}_{L}(\omega)$ as a $\text{Diff}_{L}(M)$-slice to the $\text{Diff}(M)$-action on $\mathcal{D}(\omega,L)$. Indeed, we have a map
\[
\text{Diff}(M)\times_{\text{Diff}_{L}(M)}\text{Def}_{L}(\omega)\rightarrow\mathcal{D}(\omega,L):(f,\omega')\mapsto\big((f^{-1})^{*}\omega',f(L)\big)
\]
and upon restricting to $\mathcal{D}_{\mathcal{U}'}(\omega,L)$ we obtain a map in the other direction. From this point of view, one expects the local moduli space of simultaneous deformations to be the quotient $\text{Def}_{L}(\omega)/\text{Diff}_{L}(M)$ of the ``slice'' $\text{Def}_{L}(\omega)$. Prop.~\ref{prop:bijection} above makes this precise. 
\end{remark}

\section{The main result}
In this section, we will show that the local moduli space of simultaneous deformations can be identified with an open neighborhood of the origin in the relative cohomology $H^{2}(M,L)$, provided that we restrict to small enough deformations of the pair $(\omega,L)$. By invoking the bijection of moduli spaces established in \S\ref{sec:two}, we can essentially reduce the problem to proving the same result for the local moduli space of symplectic forms for which $L$ is Lagrangian.

	\vspace{0.2cm}
To make everything precise, we need to introduce suitable topologies on the deformation spaces	$\mathcal{D}_{\mathcal{U}'}(\omega,L)$ and 
$\text{Def}_{L}(\omega)$. In the rest of this section, we assume the following setup:

\begin{itemize}
	\item We endow the space $\text{Def}_{L}(\omega)$ with the $\mathcal{C}^{0}$-topology inherited from $\big(\Omega^{2}(M),\mathcal{C}^{0}\big)$.
	\item By identifying submanifolds in $\mathcal{U}'$ with their corresponding sections in $\Gamma_{V'}(T^{*}L)$, we can endow $\mathcal{U}'$ with the $\mathcal{C}^{1}$-topology. We then endow $\mathcal{D}_{\mathcal{U}'}(\omega,L)$  with the product $\big(\mathcal{C}^{0}\times\mathcal{C}^{1}\big)$-topology inherited from $
	\big(\Omega^{2}(M)\times \mathcal{U}',\mathcal{C}^{0}\times\mathcal{C}^{1}\big)
	$.
\end{itemize}
	
\subsection{Local parametrization of the moduli space $\text{Def}_{L}(\omega)/\sim$}

The moduli space of symplectic forms for which $L$ is Lagrangian can be identified with an open neighborhood of the origin in $H^{2}(M,L)$, provided that we restrict to small deformations of $\omega$ in $\text{Def}_{L}(\omega)$.

\begin{lemma}\label{lem:nbhd}
There is a $\mathcal{C}^{0}$-open $\mathcal{W}$ around $\omega$ in the space of closed two-forms $\Omega_{cl}^{2}(M)$ such that $\mathcal{W}$ is convex and $\mathcal{W}\subset\Omega_{symp}(M)$.
\end{lemma}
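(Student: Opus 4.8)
The plan is to produce the open set $\mathcal{W}$ by combining two elementary facts about the $\mathcal{C}^0$-topology on two-forms: that being symplectic is a $\mathcal{C}^0$-open condition, and that any point in a topological vector space admits a convex open neighborhood. Recall that a two-form $\omega'$ is symplectic precisely when it is closed and nondegenerate. Since $\omega$ itself is symplectic, it is in particular nondegenerate at every point of the compact manifold $M$.

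First I would establish that nondegeneracy is a $\mathcal{C}^0$-open condition on closed two-forms. The top power $(\omega')^{\wedge n}$ (where $\dim M = 2n$) is a continuous function of $\omega'$ in the $\mathcal{C}^0$-topology, being a pointwise polynomial in the coefficients of $\omega'$, and $\omega'$ is nondegenerate exactly when $(\omega')^{\wedge n}$ is a nowhere-vanishing volume form. Since $\omega^{\wedge n}$ is nowhere zero and $M$ is compact, $\omega^{\wedge n}$ is bounded away from zero; a sufficiently small $\mathcal{C}^0$-perturbation of $\omega$ keeps the top power bounded away from zero as well, hence remains nondegenerate. This produces a $\mathcal{C}^0$-open set $\mathcal{O} \subset \Omega^2(M)$ containing $\omega$ on which every form is nondegenerate.

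Next I would intersect with the closed forms and extract a convex neighborhood. The space $\Omega^2_{cl}(M)$ is a topological vector space under the $\mathcal{C}^0$-topology, and $\mathcal{O} \cap \Omega^2_{cl}(M)$ is a $\mathcal{C}^0$-open neighborhood of $\omega$ inside it consisting entirely of symplectic forms (each such form is closed by construction and nondegenerate by the previous step). It remains to find a convex open subset $\mathcal{W}$ with $\omega \in \mathcal{W} \subset \mathcal{O} \cap \Omega^2_{cl}(M)$. This is automatic: the $\mathcal{C}^0$-topology is induced by a translation-invariant family of seminorms, so the open balls around $\omega$ are convex, and I simply take $\mathcal{W}$ to be a small enough such ball contained in $\mathcal{O} \cap \Omega^2_{cl}(M)$. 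Then $\mathcal{W}$ is convex, $\mathcal{C}^0$-open in $\Omega^2_{cl}(M)$, contains $\omega$, and lies inside $\Omega_{symp}(M)$, as required.

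I do not anticipate a serious obstacle here; the statement is essentially a softness lemma. The only point requiring mild care is making the openness of nondegeneracy uniform over $M$, which is exactly where compactness of $M$ is used: it guarantees that the pointwise nonvanishing of $\omega^{\wedge n}$ is bounded below by a positive constant, so that a single $\mathcal{C}^0$-threshold works simultaneously at all points. Without compactness one could lose control of nondegeneracy near infinity, but that is not an issue in the present setting.
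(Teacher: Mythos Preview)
Your proof is correct, but it proceeds along a different route than the paper's. The paper constructs the convex open set in one stroke: using the Poisson structure $\pi=-\omega^{-1}$ and a Riemannian metric, it defines
\[
\|\tilde{\omega}\|:=\sup\bigl\{\,|\pi^{\sharp}\circ\tilde{\omega}^{\flat}(v)+v|:v\in TM,\ |v|=1\,\bigr\}
\]
and takes $\mathcal{W}=\{\tilde{\omega}\in\Omega^{2}_{cl}(M):\|\tilde{\omega}\|<1\}$. One has $\|\omega\|=0$, any degenerate $\tilde{\omega}$ has $\|\tilde{\omega}\|\geq 1$, and $\|\cdot\|$ is convex (as a supremum of absolute values of affine functions of $\tilde{\omega}$), so the sublevel set is automatically convex. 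Your argument is more modular: you first obtain a (not necessarily convex) $\mathcal{C}^{0}$-open set of nondegenerate forms via the top power $(\omega')^{\wedge n}$, and then invoke local convexity of the $\mathcal{C}^{0}$-topology to shrink to a convex ball. Both approaches rely on compactness of $M$ in the same way (to make nondegeneracy a uniform open condition), and both yield exactly what the rest of the paper needs --- the specific shape of $\mathcal{W}$ is never used again. Your version is arguably more elementary; the paper's version has the minor virtue of producing an explicit $\mathcal{W}$ and fits naturally with the Poisson-geometric viewpoint emphasised later in the paper.
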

\begin{proof}
Denote by $\pi:=-\omega^{-1}$ the Poisson structure corresponding with $\omega$. Fix a Riemannian metric and consider the map $\|\cdot\|:\Omega^{2}_{cl}(M)\rightarrow\mathbb{R}$ determined by
\[
\|\tilde{\omega}\|:=\sup\left\{\left|\pi^{\sharp}\circ\tilde{\omega}^{\flat}(v)+v\right|: v\in TM, |v|=1\right\},\hspace{0.5cm}\tilde{\omega}\in\Omega^{2}_{cl}(M).
\]
Note that $\|\omega\|=0$ and that  $\|\tilde{\omega}\|\geq 1$ if $\tilde{\omega}$ is degenerate. Hence setting 
\[
\mathcal{W}:=\{\tilde{\omega}\in\Omega^{2}_{cl}(M):\|\tilde{\omega}\|<1\},
\]
we get a $\mathcal{C}^{0}$-open around $\omega$ in $\Omega_{cl}^{2}(M)$ consisting of symplectic forms. Note that $\mathcal{W}$ is convex. Indeed, take $\tilde{\omega}_{1},\tilde{\omega}_2\in\mathcal{W}$ and $t\in[0,1]$ and let $v\in TM$ be such that $|v|=1$. We then have
\begin{align*}
\left|\pi^{\sharp}\circ\left(t\tilde{\omega}_1^{\flat}+(1-t)\tilde{\omega}_2^{\flat}\right)(v)+v\right|&=\left|t\left(\pi^{\sharp}\circ\tilde{\omega}_1^{\flat}(v)+v\right)+(1-t)\left(\pi^{\sharp}\circ\tilde{\omega}_2^{\flat}(v)+v\right)\right|\\
&\leq t\left|\pi^{\sharp}\circ\tilde{\omega}_1^{\flat}(v)+v\right|+(1-t)\left|\pi^{\sharp}\circ\tilde{\omega}_2^{\flat}(v)+v\right|\\
&\leq t\|\tilde{\omega}_1\|+(1-t)\|\tilde{\omega}_2\|\\
&<t+(1-t)\\
&=1.
\end{align*}
This shows that $\|t\tilde{\omega}_1+(1-t)\tilde{\omega}_2\|<1$, hence $t\tilde{\omega}_1+(1-t)\tilde{\omega}_2\in\mathcal{W}$. So $\mathcal{W}$ is indeed convex.
\end{proof}

We now restrict to deformations of $\omega$ lying in the $\mathcal{C}^{0}$-open $\text{Def}_{L}(\omega)\cap\mathcal{W}$. The properties of the neighborhood $\mathcal{W}$ allow us to describe the moduli space  $\big(\text{Def}_{L}(\omega)\cap\mathcal{W}\big)/\sim$ explicitly.

\begin{prop}\label{prop:H2}
The map
\begin{equation}\label{eq:param}
\frac{\text{Def}_{L}(\omega)\cap\mathcal{W}}{\sim}\rightarrow H^{2}(M,L):[\omega']\mapsto[\omega-\omega']
\end{equation}
is a well-defined bijection onto an open neighborhood of the origin in $H^{2}(M,L)$.
\end{prop}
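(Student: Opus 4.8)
The plan is to establish well-definedness, injectivity, and surjectivity onto an open neighborhood separately; the first two are Moser-type arguments and the last is a finite-dimensionality argument for $H^{2}(M,L)$. Note first that the map is well-posed on representatives: for $\omega'\in\text{Def}_{L}(\omega)$ both $\omega$ and $\omega'$ are closed and pull back to zero on $L$, so $\omega-\omega'\in\Omega^{2}_{cl}(M,L)$ and $[\omega-\omega']\in H^{2}(M,L)$ makes sense. To see that it descends to the quotient, suppose $\omega'\sim\omega''$ via an isotopy $(\rho_t)$ with $\rho_1^{*}\omega''=\omega'$ and $\rho_t(L)=L$ for all $t$, and let $Y_t$ be its generating vector field, which is tangent to $L$. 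Integrating $\tfrac{d}{dt}\rho_t^{*}\omega''=\rho_t^{*}d\iota_{Y_t}\omega''=d\big(\rho_t^{*}\iota_{Y_t}\omega''\big)$ from $0$ to $1$ yields $\omega'-\omega''=d\gamma$ with $\gamma:=\int_0^1\rho_t^{*}\big(\iota_{Y_t}\omega''\big)\,dt$. The crucial point is that $\gamma\in\Omega^{1}(M,L)$: since $\rho_t$ preserves $L$ and $Y_t$ is tangent to $L$ while $\omega''$ restricts to zero on $L$, one has $\iota_{L}^{*}\big(\iota_{Y_t}\omega''\big)=0$, whence $\iota_{L}^{*}\gamma=0$. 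Therefore $[\omega-\omega']=[\omega-\omega'']$.

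For injectivity, assume $[\omega-\omega']=[\omega-\omega'']$, i.e.\ $\omega''-\omega'=d\gamma$ for some $\gamma\in\Omega^{1}(M,L)$. I run Moser's argument along the linear path $\omega_t:=(1-t)\omega'+t\omega''$, which lies inside $\mathcal{W}\subset\Omega_{symp}(M)$ by convexity of $\mathcal{W}$ (Lemma\,\ref{lem:nbhd}) and satisfies $\iota_{L}^{*}\omega_t=0$, so each $\omega_t$ is symplectic with $L$ Lagrangian. Defining $Y_t$ by $\iota_{Y_t}\omega_t=-\gamma$, the flow $\rho_t$ satisfies $\tfrac{d}{dt}\rho_t^{*}\omega_t=\rho_t^{*}\big(d\iota_{Y_t}\omega_t+d\gamma\big)=0$, hence $\rho_1^{*}\omega''=\omega'$. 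The decisive verification is that $\rho_t$ preserves $L$: from $\gamma\in\Omega^{1}(M,L)$ we get $\omega_t(Y_t,v)=-\gamma(v)=0$ for every $v\in TL$ along $L$, so $Y_t|_L\in TL^{\omega_t}=TL$ because $L$ is Lagrangian for $\omega_t$; thus $Y_t$ is tangent to $L$ and $\rho_t(L)=L$. This exhibits $\omega'\sim\omega''$.

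For surjectivity onto an open neighborhood of $0$, observe that $\omega'\mapsto\eta:=\omega-\omega'$ is a bijection from $\text{Def}_{L}(\omega)\cap\mathcal{W}$ onto $\mathcal{V}:=\{\eta\in\Omega^{2}_{cl}(M,L):\omega-\eta\in\mathcal{W}\}$, which is $\mathcal{C}^{0}$-open, convex, and contains $0$; under it the map becomes the restriction to $\mathcal{V}$ of the quotient projection $\mathrm{pr}\colon\Omega^{2}_{cl}(M,L)\to H^{2}(M,L)$, so the image is exactly $\mathrm{pr}(\mathcal{V})$. To see that $\mathrm{pr}(\mathcal{V})$ is open, I use that $H^{2}(M,L)$ is finite dimensional (via the long exact sequence \eqref{eq:long-ex}): choose a finite-dimensional subspace $S\subset\Omega^{2}_{cl}(M,L)$ mapped isomorphically onto $H^{2}(M,L)$ by $\mathrm{pr}$. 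For any $\eta_0\in\mathcal{V}$, openness of $\mathcal{V}$ gives a $\mathcal{C}^{0}$-small ball $B\subset S$ with $\eta_0+B\subset\mathcal{V}$, and since $\mathrm{pr}|_S$ is a linear isomorphism of finite-dimensional spaces, $\mathrm{pr}(\eta_0)+\mathrm{pr}(B)=\mathrm{pr}(\eta_0+B)$ is a neighborhood of $\mathrm{pr}(\eta_0)$ contained in $\mathrm{pr}(\mathcal{V})$. Hence $\mathrm{pr}(\mathcal{V})$ is an open neighborhood of $\mathrm{pr}(0)=0$.

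I expect the main obstacle to be the injectivity step, and more precisely the verification that the Moser vector field $Y_t$ remains tangent to $L$, so that the resulting isotopy preserves $L$ at every time rather than merely carrying $\omega''$ to $\omega'$. This is exactly where the two hypotheses are jointly used: that $\gamma$ is a relative form and that $L$ stays Lagrangian along the whole path $\omega_t$, the latter guaranteed by the convexity of $\mathcal{W}$ from Lemma\,\ref{lem:nbhd}.
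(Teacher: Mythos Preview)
Your proof is correct and follows essentially the same approach as the paper: the well-definedness and injectivity steps are Moser-type arguments that match the paper's almost verbatim, including the key observation that $Y_t|_L\in TL^{\perp_{\omega_t}}=TL$ because $L$ stays Lagrangian along the convex path in $\mathcal{W}$. Your surjectivity argument is a mild repackaging of the paper's---choosing a linear section $S\hookrightarrow\Omega^2_{cl}(M,L)$ rather than an explicit basis $\eta_1,\dots,\eta_k$---but the underlying reasoning is identical.
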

\begin{proof}
We first check that the map is well-defined. If $\omega'_{1},\omega'_{2}\in\text{Def}_{L}(\omega)\cap\mathcal{W}$ are equivalent then there exists an isotopy $(\rho_t)_{t\in[0,1]}$ of $M$ such that 
\[
(\rho_1)^{*}\omega'_{2}=\omega'_{1}\hspace{0.5cm}\text{and}\hspace{0.5cm}\rho_t(L)=L\ \  \text{for all}\ t\in[0,1].
\]
If $Y_t$ is the time-dependent vector field of $\rho_t$, then we have
\[
(\omega-\omega'_{2})-(\omega-\omega'_{1})=(\rho_1)^{*}\omega'_{2}-\omega'_{2}=\int_{0}^{1}\frac{d}{dt}(\rho_{t}^{*}\omega'_{2})dt=d\left(\int_{0}^{1}\rho_{t}^{*}(\iota_{Y_t}\omega'_{2})dt\right).
\]
Note that $\rho_{t}^{*}(\iota_{Y_t}\omega'_{2})\in\Omega^{1}(M,L)$ since $\rho_t$ preserves $L$, $Y_t$ is tangent to $L$ and $\omega'_{2}\in\Omega^{2}(M,L)$. Hence the classes $[\omega-\omega'_{1}]$ and $[\omega-\omega'_{2}]$ in $H^{2}(M,L)$ agree, so the map is well-defined.

Next, we check that the map surjects onto an open neighborhood of the origin in $H^{2}(M,L)$. Clearly, the class $[\omega]$ is mapped to zero. It remains to show that the image of the map \eqref{eq:param} is open. Assume that $[\beta]\in H^{2}(M,L)$ is the image of $[\omega']$ for some $\omega'\in\text{Def}_{L}(\omega)\cap\mathcal{W}$. Pick closed forms $\eta_1,\ldots,\eta_k\in\Omega^{2}(M,L)$ whose cohomology classes $[\eta_1],\ldots,[\eta_k]$ form a basis of $H^{2}(M,L)$. There exists $\epsilon>0$ such that for all $\delta\in\mathbb{R}^{k}$ with $\|\delta\|<\epsilon$, we have that
\[
\omega'-\sum_{i=1}^{k}\delta_i\eta_i\in\text{Def}_{L}(\omega)\cap\mathcal{W}.
\]
The image of its equivalence class in $\big(\text{Def}_{L}(\omega)\cap\mathcal{W}\big)/\sim$ under the map \eqref{eq:param} is
\[
\left[\omega-\omega'+\sum_{i=1}^{k}\delta_i\eta_i\right]=[\beta]+\sum_{i=1}^{k}\delta_i[\eta_i].
\]
Therefore the open ball $B_{[\beta],\epsilon}\subset H^{2}(M,L)$  is contained in the image of the map \eqref{eq:param}.

At last, we check that the map is injective. Assume that $\omega'_{1},\omega'_{2}\in\text{Def}_{L}(\omega)\cap\mathcal{W}$ are such that $[\omega'_1]=[\omega'_2]$ in $H^{2}(M,L)$. We have to show that there is an isotopy $(\rho_t)_{t\in[0,1]}$ such that 
\begin{equation}\label{eq:two}
(\rho_1)^{*}\omega'_{2}=\omega'_{1}\hspace{0.5cm}\text{and}\hspace{0.5cm}\rho_t(L)=L\ \  \text{for all}\ t\in[0,1].
\end{equation}
Pick $\beta\in\Omega^{1}(M,L)$ such that 
$$
\omega'_{2}-\omega'_1=d\beta
$$
and consider the straight line homotopy 
\[
\Omega_t=\omega'_1+td\beta.
\]
Towards using the Moser trick, let $(\rho_t)_{t\in[0,1]}$ be an isotopy with corresponding time-dependent vector field $(Y_t)_{t\in[0,1]}$. We then have
\[
\frac{d}{dt}(\rho_t)^{*}\Omega_t=(\rho_t)^{*}\left(d\left(\iota_{Y_t}\Omega_t+\beta\right)\right).
\]
Note that $\Omega_t$ is symplectic for all $t\in[0,1]$ since $\mathcal{W}$ is convex and $\mathcal{W}\subset\Omega_{symp}(M)$. Hence, we can pick $Y_t$ such that 
\begin{equation}\label{eq:perp}
	\iota_{Y_t}\Omega_t=-\beta.
\end{equation}
The isotopy $(\rho_t)_{t\in[0,1]}$ then satisfies 
\[
(\rho_1)^{*}\omega'_{2}=\omega'_1.
\]
It remains to argue that $\rho_t$ preserves $L$ for all $t\in[0,1]$. To do so, note that equation \eqref{eq:perp} implies that for all $x\in L$ we have
\[
(Y_t)(x)\in T_{x}L^{\perp_{\Omega_t}}=T_{x}L,
\]
where $T_{x}L^{\perp_{\Omega_t}}$ is the symplectic orthogonal of $T_{x}L$ with respect to $\Omega_t$. Here we used the fact that $L$ is Lagrangian with respect to $\Omega_t$ for all $t\in[0,1]$. So the time-dependent vector field $Y_t$ is tangent to $L$, and therefore the isotopy $\rho_t$ fixes $L$ for all $t\in[0,1]$. It follows that the conditions \eqref{eq:two} hold. This shows that the map \eqref{eq:param} is injective, so the proof is finished.
\end{proof}

\subsection{Continuity of the map $q$}
In the previous subsection, we parametrized a $\mathcal{C}^{0}$-open around $\omega$ in $\text{Def}_{L}(\omega)$ up to isotopies fixing $L$. We want to use this result to parametrize a $\big(\mathcal{C}^{0}\times\mathcal{C}^{1}\big)$-open around $(\omega,L)$ in $\mathcal{D}_{\mathcal{U}'}(\omega,L)$ up to isotopies. This can be done using the map
\[
q:\mathcal{D}_{\mathcal{U}'}(\omega,L)\rightarrow\text{Def}_{L}(\omega):(\omega',L')\mapsto\phi_{L'}^{*}\omega',
\]
if we show that this map is continuous with respect to the $\big(\mathcal{C}^{0}\times\mathcal{C}^{1}\big)$-topology on the domain and the $\mathcal{C}^{0}$-topology on the codomain. This subsection is devoted to proving the following.

\begin{prop}\label{prop:continuity}
The map
\[
F:\big(\Omega^{2}(M)\times \mathcal{U}',\mathcal{C}^{0}\times\mathcal{C}^{1}\big)\rightarrow \big(\Omega^{2}(M),\mathcal{C}^{0}\big):(\omega',L')\mapsto\phi_{L'}^{*}\omega'.
\]
is continuous at the point $(\omega,L)$.
\end{prop}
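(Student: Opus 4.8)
The plan is to use that $F(\omega,L)=\phi_L^*\omega=\omega$ since $\phi_L=\mathrm{Id}$, and to estimate the difference $\phi_{L'}^*\omega'-\omega$ by splitting it according to the two independent ways in which $(\omega',L')$ can differ from $(\omega,L)$. Writing
\[
\phi_{L'}^*\omega' - \omega = \phi_{L'}^*(\omega'-\omega) + \big(\phi_{L'}^*\omega - \omega\big),
\]
the first summand measures the $\mathcal{C}^0$-distortion of the small form $\omega'-\omega$ under $\phi_{L'}$, while the second measures the distortion of the fixed form $\omega$ under $\phi_{L'}$. I would bound both in the $\mathcal{C}^0$-norm, after which the whole argument reduces to the single geometric fact that $\phi_{L'}$ is $\mathcal{C}^1$-close to the identity once $L'$ is $\mathcal{C}^1$-close to $L$.

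The core step is therefore to show that $L'\mapsto\phi_{L'}$ is continuous from $(\mathcal{U}',\mathcal{C}^1)$ into $(\mathrm{Diff}(M),\mathcal{C}^1)$ at $L$, with $\phi_L=\mathrm{Id}$. I would factor this through two continuity statements. First, the assignment sending the section $\sigma\in\Gamma_{V'}(T^*L)$ corresponding to $L'$ to the vector field $X_{L'}=f\psi_*(X_\sigma)$ is continuous in the $\mathcal{C}^1$-topologies: in the cotangent coordinates of Rem.\,\ref{rem:corr} the components of $X_\sigma$ are precisely the components $g_i$ of $\sigma$ (which depend on the base only), so there is a constant with $\|X_\sigma\|_{\mathcal{C}^1}\le C\|\sigma\|_{\mathcal{C}^1}$, and since $f$ and $\psi$ are fixed with bounded derivatives on the compact set $\overline{U'}$, also $\|X_{L'}\|_{\mathcal{C}^1}\le C'\|\sigma\|_{\mathcal{C}^1}$; in particular $X_{L'}\to 0$ in $\mathcal{C}^1$ as $L'\to L$. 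Second, the time-$1$ flow map $X\mapsto\phi^1_X$ is continuous from $(\mathfrak{X}(M),\mathcal{C}^1)$ to $(\mathrm{Diff}(M),\mathcal{C}^1)$ at $0$. This is the standard consequence of Gronwall's inequality applied to the variational equation $\tfrac{d}{dt}D\phi^t_X=(DX)|_{\phi^t_X}\cdot D\phi^t_X$, giving the uniform bound $\|D\phi^1_X\|_{\mathcal{C}^0}\le e^{\|X\|_{\mathcal{C}^1}}$ together with $\|D\phi^1_X-\mathrm{Id}\|_{\mathcal{C}^0}\le \|X\|_{\mathcal{C}^1}\,e^{\|X\|_{\mathcal{C}^1}}$, while integrating yields $\|\phi^1_X-\mathrm{Id}\|_{\mathcal{C}^0}\le\|X\|_{\mathcal{C}^0}$. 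Composing the two gives $\phi_{L'}\to\mathrm{Id}$ in $\mathcal{C}^1$.

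With this in hand I would close the two estimates. For the first summand, the pointwise formula $(\phi_{L'}^*\beta)_p(u,v)=\beta_{\phi_{L'}(p)}(D\phi_{L'}u,D\phi_{L'}v)$ gives $\|\phi_{L'}^*(\omega'-\omega)\|_{\mathcal{C}^0}\le\|D\phi_{L'}\|_{\mathcal{C}^0}^2\,\|\omega'-\omega\|_{\mathcal{C}^0}$; since $\|D\phi_{L'}\|_{\mathcal{C}^0}$ is uniformly bounded for $L'$ near $L$, this is small once $\omega'$ is $\mathcal{C}^0$-close to $\omega$. For the second summand I would write $\phi_{L'}^*\omega-\omega$ pointwise and split it into a term controlled by $\|D\phi_{L'}-\mathrm{Id}\|_{\mathcal{C}^0}$ (using $\|\omega\|_{\mathcal{C}^0}$) and a term controlled by the displacement $\|\phi_{L'}-\mathrm{Id}\|_{\mathcal{C}^0}$ (using that the smooth form $\omega$ is Lipschitz on the compact $M$); both tend to $0$ as $\phi_{L'}\to\mathrm{Id}$ in $\mathcal{C}^1$. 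Hence both summands become arbitrarily $\mathcal{C}^0$-small, which proves continuity of $F$ at $(\omega,L)$.

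The main obstacle is the bookkeeping of topologies, that is, seeing exactly why $\mathcal{C}^1$-control on $\mathcal{U}'$, and not merely $\mathcal{C}^0$, is indispensable. It is needed precisely because pulling back a $2$-form brings in the differential $D\phi_{L'}$, and controlling $D\phi_{L'}$ through Gronwall requires a bound on $\|X_{L'}\|_{\mathcal{C}^1}$, which is in turn governed by $\|\sigma\|_{\mathcal{C}^1}$: a $\mathcal{C}^0$-small section with large derivatives would produce a flow whose differential is uncontrolled, and then neither summand could be estimated. Ensuring that all the constants above are uniform over a fixed $\mathcal{C}^1$-neighborhood of $L$ in $\mathcal{U}'$ is the remaining, essential but routine, technical point.
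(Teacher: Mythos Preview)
Your proposal is correct and follows essentially the same approach as the paper: reduce to showing that $L'\mapsto\phi_{L'}$ is $\mathcal{C}^1$-continuous at $L$, factor this as $L'\mapsto X_{L'}$ (continuous in $\mathcal{C}^1$ via cotangent coordinates) followed by the time-$1$ flow map (continuous at $0$ via Gronwall), and then use that pullback of $2$-forms is $(\mathcal{C}^0\times\mathcal{C}^1)\to\mathcal{C}^0$ continuous. The only cosmetic difference is that where the paper dispatches the last point in one line (``follows from the coordinate expression of the pullback''), you spell it out via the splitting $\phi_{L'}^*\omega'-\omega=\phi_{L'}^*(\omega'-\omega)+(\phi_{L'}^*\omega-\omega)$ and separate bounds on each summand.
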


Towards proving Prop.\,\ref{prop:continuity}, we first note that the map
\[
\big(\Omega^{2}(M)\times\text{Diff}(M),\mathcal{C}^{0}\times\mathcal{C}^{1}\big)\rightarrow\big(\Omega^{2}(M),\mathcal{C}^{0}\big):(\omega',\phi)\mapsto\phi^{*}\omega'.
\]
is continuous. This follows immediately from the coordinate expression of the pullback $\phi^{*}\omega'$. Therefore, in order to prove Prop.\,\ref{prop:continuity} we only need to establish the following auxiliary result.

\begin{lemma}\label{lem:flow}
The assignment
\[
\big(\mathcal{U}',\mathcal{C}^{1}\big)\rightarrow\big(\text{Diff}(M),\mathcal{C}^{1}\big):L'\mapsto \phi_{L'}
\]
is continuous at the point $L$.
\end{lemma}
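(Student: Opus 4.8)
The plan is to factor the assignment $L'\mapsto\phi_{L'}$ through the vector field $X_{L'}$ and reduce the statement to the continuity of the time-$1$ flow map. Recall from Def.\,\ref{def:phi} that $\phi_{L'}=\phi^{1}_{X_{L'}}$ is the time-$1$ flow of $X_{L'}=f\psi_{*}(X_{\sigma})$, where $\sigma\in\Gamma_{V'}(T^{*}L)$ is the section corresponding with $L'$ and $X_{\sigma}$ is the associated vertical, fiberwise constant vector field of Rem.\,\ref{rem:corr}. Since $L$ corresponds with the zero section, we have $X_{L}=0$ and $\phi_{L}=\text{Id}$, so it suffices to prove continuity at this point. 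I would split the argument into two parts: first the continuity of $L'\mapsto X_{L'}$ as a map $\big(\mathcal{U}',\mathcal{C}^{1}\big)\rightarrow\big(\mathfrak{X}(M),\mathcal{C}^{1}\big)$ sending $L$ to $0$, and then the continuity at $0$ of the time-$1$ flow map $\big(\mathfrak{X}(M),\mathcal{C}^{1}\big)\rightarrow\big(\text{Diff}(M),\mathcal{C}^{1}\big)$.

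For the first part, I would observe that $L'\mapsto X_{L'}$ is a composition of elementary continuous maps. By the very definition of the $\mathcal{C}^{1}$-topology on $\mathcal{U}'$, the chart $L'\mapsto\sigma$ is a homeomorphism onto $\big(\Gamma_{V'}(T^{*}L),\mathcal{C}^{1}\big)$. The passage $\sigma\mapsto X_{\sigma}$ is linear and, by the coordinate description $X_{\sigma}=\sum_{i}g_{i}\,\partial_{y_{i}}$ of Rem.\,\ref{rem:corr}, bounded in $\mathcal{C}^{1}$ over the compact set $\overline{V'}$, since the coefficients of $X_{\sigma}$ are precisely those of $\sigma$ pulled back along $p$. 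Pushing forward along the fixed symplectomorphism $\psi$ and multiplying by the fixed cutoff $f$ (extending by zero outside $\text{supp}(f)\subset U$) are again linear operations bounded in $\mathcal{C}^{1}$ on the relevant compact sets. Composing these proves continuity of $L'\mapsto X_{L'}$, with $L\mapsto 0$.

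The main step is the continuity at $0$ of the time-$1$ flow map. Since $M$ is compact, every vector field is complete and $\phi^{1}_{X}$ is a well-defined diffeomorphism, so I would estimate directly how far $\phi^{1}_{X}$ is from the identity in terms of $\|X\|_{\mathcal{C}^{1}}$. The integral form of the flow equation $\phi^{t}_{X}(x)=x+\int_{0}^{t}X(\phi^{s}_{X}(x))\,ds$ gives at once $\|\phi^{1}_{X}-\text{Id}\|_{\mathcal{C}^{0}}\leq\|X\|_{\mathcal{C}^{0}}$. For the first derivatives I would use the variational equation $\tfrac{d}{dt}D\phi^{t}_{X}=DX(\phi^{t}_{X})\,D\phi^{t}_{X}$ with $D\phi^{0}_{X}=\text{Id}$; a Grönwall estimate applied to its integral form yields $\|D\phi^{t}_{X}\|\leq e^{\|X\|_{\mathcal{C}^{1}}}$ for $t\in[0,1]$, and feeding this back in gives $\|D\phi^{1}_{X}-\text{Id}\|\leq\|X\|_{\mathcal{C}^{1}}\,e^{\|X\|_{\mathcal{C}^{1}}}$. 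Both bounds tend to $0$ as $\|X\|_{\mathcal{C}^{1}}\to 0$, which is exactly continuity of the flow map at $0$. Chaining the two continuous maps then proves that $L'\mapsto\phi_{L'}$ is continuous at $L$.

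I expect the only delicate point to be the globalization of these ODE estimates from a single Euclidean chart to the compact manifold $M$: one must ensure that for $X$ of small $\mathcal{C}^{1}$-norm the flow does not leave the chart over which one estimates, and that the expression $D\phi^{t}_{X}-\text{Id}$, which a priori only makes sense in coordinates, is controlled uniformly. The cleanest way around this bookkeeping is to fix a Whitney embedding $M\hookrightarrow\mathbb{R}^{N}$ and carry out all estimates for the induced (extended) flow in $\mathbb{R}^{N}$, where the derivatives are genuine matrices and the Grönwall arguments apply verbatim; alternatively one works in a fixed finite atlas and invokes a Lebesgue-number argument, using compactness of $M$ to bound the transition data uniformly.
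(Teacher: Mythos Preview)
Your proposal is correct and follows the same two-step decomposition as the paper: first the continuity of $L'\mapsto X_{L'}$ in $\mathcal{C}^{1}$, then a Gr\"onwall estimate on the integrated flow and variational equations to bound $\phi^{1}_{X}-\text{Id}$ and $D\phi^{1}_{X}-\text{Id}$ by quantities vanishing with $\|X\|_{\mathcal{C}^{1}}$. The only place the paper differs from your write-up is in the globalization step: rather than using a Whitney embedding or a generic finite-atlas-plus-Lebesgue-number argument, the paper observes that $X_{L'}$ is supported in $U\cong V\subset T^{*}L$ and is \emph{vertical}, so its flow preserves each chart $O_{i}\times\mathbb{R}^{n}$ automatically and the estimates can be carried out chart-by-chart without worrying about escape; this buys a shorter argument, while your approach has the virtue of not relying on that special structure.
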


We will use the following version of Gronwall's inequality \cite[Thm.\,1.2.2]{Bellman}: if $u:[0,1]\rightarrow\mathbb{R}$ is a continuous, positive function and there are positive constants $A$ and $B$ such that
\begin{equation}\label{eq:gronwall}
u(t)\leq A+B\int_{0}^{t}u(s)ds
\end{equation}
then $u$ satisfies $u(t)\leq Ae^{B}$ for all $t\in[0,1]$. We now proceed with the proof of Lemma\,\ref{lem:flow}, which is inspired by the proofs of \cite[Lemma\,B.3]{Miranda} and \cite[Lemma\,3.10]{Marcut}.

\begin{proof}[Proof of Lemma\,\ref{lem:flow}]
The proof consists of two steps.

\begin{enumerate}
\item We first show that the map
\begin{equation}\label{eq:flow1}
\big(\mathcal{U}',\mathcal{C}^{1}\big)\rightarrow\big(\mathfrak{X}(M),\mathcal{C}^{1}\big):L'\mapsto X_{L'}
\end{equation}
is continuous at $L$. The support of $X_{L'}$ is contained in the open $U\subset M$, which we identify with $V\subset T^{*}L$. Hence, we may assume that $X_{L'}$ is a compactly supported vector field on $V\subset T^{*}L$. Recall that $L'\in\mathcal{U}'$ corresponds with a section $\alpha'\in\Gamma_{V'}(T^{*}L)$. In cotangent coordinates $(x_1,\ldots,x_n,y_1,\ldots,y_n)$, the assignment \eqref{eq:flow1} is given by
\[
\sum_{i=1}^{n}g_i dx_i\mapsto f\left(\sum_{i=1}^{n}g_i\partial_{y_i}\right),
\]
where $f$ is a fixed compactly supported function on $V$. From this expression, it is immediately clear that the assignment $L'\mapsto X_{L'}$ is continuous for $\mathcal{C}^{1}$-topologies. 
\item Next, we show that the map
\begin{equation}\label{eq:flow}
\big(\{X_{L'}:L'\in\mathcal{U}'\},\mathcal{C}^{1}\big)\rightarrow\big(\text{Diff}(M),\mathcal{C}^{1}\big):X_{L'}\mapsto\phi_{L'}
\end{equation}
is continuous at zero. Again, since $X_{L'}$ and its flow $\phi_{L'}$ are supported in the open $U\subset M$, we may assume that both are defined on the open $V\subset T^{*}L$. 

We introduce some notation. Cover $L$ in finitely many coordinate charts $O_i\cong\mathbb{R}^{n}$ such that the closed balls $\{\overline{B}_{i}\}_{i}$ of radius $1$ still cover $L$. Let us denote the cotangent coordinates on $O_i\times\mathbb{R}^{n}$ by $(z_1,\ldots,z_n,z_{n+1},\ldots,z_{2n})$. Since the compactly supported vector field $X_{L'}\in\mathfrak{X}(V)$ is vertical, it is locally given by
\[
X_{L'}|_{O_i\times\mathbb{R}^{n}}=f_i^{1}(z)\partial_{z_{n+1}}+\cdots+f_i^{n}(z)\partial_{z_{2n}}.
\]
Its $\mathcal{C}^{1}$-norm is then given by
\[
\|X_{L'}\|_{1}:=\sup_{i,j}\left\{\left|\frac{\partial^{|I|}f_{i}^{j}}{\partial z^{I}}(z)\right|: z\in\overline{B}_{i}\times\mathbb{R}^{n}\ \text{and}\ 0\leq|I|\leq 1 \right\}.
\] 
We denote the flow of $X_{L'}$ by $(\phi_t)_{t\in[0,1]}$, so that $\phi_{L'}=\phi_1$. 
%The flow can be written locally on $O_i\times\mathbb{R}^{n}$ as
%\[
%\phi_{t}(x,y)=\left(x,\phi_{t}^{1}(x,y),\ldots,\phi_{t}^{n}(x,y)\right).
%\]
In what follows, we need to estimate the $\mathcal{C}^{1}$-distance between $\phi_{1}$ and $\text{Id}$. To this end, we define
\[
\chi_{i,t}:=\phi_{t}-\text{Id}:O_i\times\mathbb{R}^{n}\rightarrow O_i\times\mathbb{R}^{n}.
\]
Written in components, we have $\chi_{i,t}=(0,\chi_{i,t}^{1},\ldots,\chi_{i,t}^{n})$.
Note that $\chi_{i,t}^{j}$ satisfies 
\[
\frac{d\chi_{i,t}^{j}}{dt}(z)=f_{i}^{j}(z+\chi_{i,t}(z)),\hspace{0.5cm}\chi_{i,0}^{j}=0.
\]
It follows that
\begin{equation}\label{eq:integrated}
\chi_{i,t}^{j}(z)=\int_{0}^{t}f_{i}^{j}(z+\chi_{i,s}(z))ds.
\end{equation}
We first estimate the difference between $\phi_{1}$ and $\text{Id}$. For $z\in\overline{B}_{i}\times\mathbb{R}^{n}$, we have
\begin{equation}\label{eq:est1}
\left|\chi_{i,1}^{j}(z)\right|=\left|\int_{0}^{1}f_{i}^{j}(z+\chi_{i,s}(z))ds\right|\leq \int_{0}^{1}\left|f_{i}^{j}(z+\chi_{i,s}(z))\right|ds\leq \|X_{L'}\|_{1}.
\end{equation}
We now estimate the difference between derivatives of $\phi_{1}$ and $\text{Id}$. By taking partial derivatives in the equation \eqref{eq:integrated}, we get for $z\in\overline{B}_{i}\times\mathbb{R}^{n}$ that
\[
\frac{\partial\chi_{i,t}^{j}}{\partial z_k}(z)=\int_{0}^{t}\left(\frac{\partial f_{i}^{j}}{\partial z_k}(z+\chi_{i,s}(z))+\sum_{l=n+1}^{2n}\frac{\partial f_{i}^{j}}{\partial z_l}(z+\chi_{i,s}(z))\frac{\partial\chi_{i,s}^{l-n}}{\partial z_k}(z)\right)ds.
\]
It follows that
\[
\left|\frac{\partial\chi_{i,t}^{j}}{\partial z_k}(z)\right|\leq \|X_{L'}\|_{1}+\|X_{L'}\|_{1}\int_{0}^{t}\sum_{l=1}^{n}\left|\frac{\partial\chi_{i,s}^{l}}{\partial z_k}(z)\right|ds.
\]
Consequently,
\[
\sum_{l=1}^{n}\left|\frac{\partial\chi_{i,t}^{l}}{\partial z_k}(z)\right|\leq n\|X_{L'}\|_{1}+n\|X_{L'}\|_{1}\int_{0}^{t}\sum_{l=1}^{n}\left|\frac{\partial\chi_{i,s}^{l}}{\partial z_k}(z)\right|ds.
\]
We are now in the setup of Gronwall's inequality, see \eqref{eq:gronwall} above.
It follows that
\[
\left|\frac{\partial\chi_{i,t}^{j}}{\partial z_k}(z)\right|\leq \sum_{l=1}^{n}\left|\frac{\partial\chi_{i,t}^{l}}{\partial z_k}(z)\right|\leq n\|X_{L'}\|_{1} e^{n\|X_{L'}\|_{1}}.
\]
In particular, we proved that
\begin{equation}\label{eq:est2}
	\left|\frac{\partial\chi_{i,1}^{j}}{\partial z_k}(z)\right|\leq n\|X_{L'}\|_{1} e^{n\|X_{L'}\|_{1}}.
\end{equation}
The estimates \eqref{eq:est1} and \eqref{eq:est2} confirm that the map \eqref{eq:flow} is indeed continuous. \qedhere
\end{enumerate}
\end{proof}

\subsection{Local parametrization of the moduli space $\mathcal{D}_{\mathcal{U}'}(\omega,L)/\sim$}\label{subsec:main}
We can now prove our main result, which parametrizes a $\big(\mathcal{C}^{0}\times\mathcal{C}^{1}\big)$-open around $(\omega,L)$ in $\mathcal{D}_{\mathcal{U}'}(\omega,L)$ up to isotopies. This open neighborhood of $(\omega,L)$ is obtained as follows:
\begin{enumerate}[i)]
	\item In Lemma\,\ref{lem:nbhd}, we constructed a suitable $\mathcal{C}^{0}$-open neighborhood  $\mathcal{W}$ of $\omega$ in $\Omega^{2}_{cl}(M)$. Hence there exists a $\mathcal{C}^{0}$-open $\mathcal{V}$ around $\omega$ in $\Omega^{2}(M)$ such that $\mathcal{W}=\mathcal{V}\cap \Omega^{2}_{cl}(M)$.
	\item By Prop.\,\ref{prop:continuity}, there exists a $\big(\mathcal{C}^{0}\times\mathcal{C}^{1}\big)$-open $\mathcal{O}$ around $(\omega,L)$ in $\Omega^{2}(M)\times \mathcal{U}'$ such that $F(\mathcal{O})\subset\mathcal{V}$. Then $\mathcal{D}_{\mathcal{U}'}(\omega,L)\cap\mathcal{O}$ is the desired $\big(\mathcal{C}^{0}\times\mathcal{C}^{1}\big)$-open around $(\omega,L)$ in $\mathcal{D}_{\mathcal{U}'}(\omega,L)$.
\end{enumerate}

\begin{thm}\label{thm:main}
The map
\begin{equation}\label{eq:main-map}
\frac{\mathcal{D}_{\mathcal{U}'}(\omega,L)\cap\mathcal{O}}{\sim}\rightarrow H^{2}(M,L):[(\omega',L')]\mapsto[\omega-\phi_{L'}^{*}\omega']
\end{equation}
is a well-defined bijection onto an open neighborhood of the origin in $H^{2}(M,L)$.
\end{thm}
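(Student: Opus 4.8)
The plan is to realize the map \eqref{eq:main-map} as a composition of the two bijections already established --- the equivalence $\overline{q}$ of Proposition~\ref{prop:bijection} and the parametrization $[\omega']\mapsto[\omega-\omega']$ of Proposition~\ref{prop:H2} --- and then to upgrade ``bijection onto its image'' to ``bijection onto an open neighborhood of the origin'' by a direct perturbation argument tailored to the construction of $\mathcal{O}$.

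First I would record the compatibility of the relevant open sets. For $(\omega',L')\in\mathcal{D}_{\mathcal{U}'}(\omega,L)\cap\mathcal{O}$, the form $\phi_{L'}^{*}\omega'$ is symplectic and has $L$ Lagrangian (as $\phi_{L'}(L)=L'$), so $q(\omega',L')=\phi_{L'}^{*}\omega'\in\text{Def}_{L}(\omega)$; it is moreover closed, and by the defining property $F(\mathcal{O})\subset\mathcal{V}$ of $\mathcal{O}$ it lies in $\mathcal{V}$, hence in $\mathcal{V}\cap\Omega^{2}_{cl}(M)=\mathcal{W}$. Thus $q$ restricts to a map $\mathcal{D}_{\mathcal{U}'}(\omega,L)\cap\mathcal{O}\to\text{Def}_{L}(\omega)\cap\mathcal{W}$, and \eqref{eq:main-map} equals the composite of $\overline{q}$ with the map of Proposition~\ref{prop:H2}. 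Well-definedness and injectivity then follow at once: well-definedness because $\overline{q}$ is well-defined (Lemma~\ref{lem:well-defined}) and $[\omega']\mapsto[\omega-\omega']$ sends $\sim$-equivalent forms to the same class, and injectivity because both factors are injective (Propositions~\ref{prop:bijection} and~\ref{prop:H2}).

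It remains to show the image is an open neighborhood of the origin. The origin is hit by $[(\omega,L)]$, since $\phi_{L}=\text{Id}$ gives $[\omega-\omega]=0$, and the image is contained in the image of Proposition~\ref{prop:H2}, which is an open neighborhood of the origin. The crux is therefore openness of the image at an arbitrary class $[\beta]=[\omega-\phi_{L'}^{*}\omega']$ arising from some $(\omega',L')\in\mathcal{D}_{\mathcal{U}'}(\omega,L)\cap\mathcal{O}$. Here one cannot simply perturb $\omega'$ by forms in $\Omega^{2}(M,L)$, since that would destroy the Lagrangian condition for $L'$ when $L'\neq L$. Instead I would fix closed forms $\eta_{1},\dots,\eta_{k}\in\Omega^{2}(M,L)$ whose classes form a basis of $H^{2}(M,L)$ and set $\mu_{i}:=(\phi_{L'}^{-1})^{*}\eta_{i}$. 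Since $\phi_{L'}(L)=L'$, each $\mu_{i}$ is closed with $\iota_{L'}^{*}\mu_{i}=0$, so $\omega'-\sum_{i}s_{i}\mu_{i}$ still has $L'$ Lagrangian; for $\|s\|$ small it is still symplectic, and the pair $(\omega'-\sum_{i}s_{i}\mu_{i},L')$ still lies in $\mathcal{D}_{\mathcal{U}'}(\omega,L)\cap\mathcal{O}$ (its second factor is unchanged and the first moves only $\mathcal{C}^{0}$-slightly inside the open set $\mathcal{O}$). Because $\phi_{L'}^{*}\mu_{i}=\eta_{i}$, the map \eqref{eq:main-map} sends this pair to $[\beta]+\sum_{i}s_{i}[\eta_{i}]$, so a whole ball around $[\beta]$ lies in the image. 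Hence the image is open, completing the proof.

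The main obstacle is precisely this last openness step. The naive perturbation argument of Proposition~\ref{prop:H2} breaks the constraint that $L'$ be Lagrangian, and the remedy is to transport a basis of $H^{2}(M,L)$ through the diffeomorphism $\phi_{L'}$, so that the perturbing forms $\mu_{i}$ are adapted to $L'$ rather than to $L$; the identity $\phi_{L'}^{*}\mu_{i}=\eta_{i}$ then guarantees that these admissible perturbations still realize a basis of directions in $H^{2}(M,L)$ after applying \eqref{eq:main-map}.
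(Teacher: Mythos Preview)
Your proof is correct and follows essentially the same route as the paper: you factor \eqref{eq:main-map} as $\overline{q}$ followed by the map of Proposition~\ref{prop:H2} to get well-definedness and injectivity, and for openness you perturb $\omega'$ by the transported forms $(\phi_{L'}^{-1})^{*}\eta_i$ so that $L'$ stays Lagrangian and the image moves by $\sum s_i[\eta_i]$. Your explicit verification that $\iota_{L'}^{*}\mu_i=0$ is a nice touch that the paper leaves implicit.
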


\begin{proof}
The proof relies on the fact that if $(\omega',L')\in\mathcal{D}_{\mathcal{U}'}(\omega,L)\cap\mathcal{O}$, then $\phi_{L'}^{*}\omega'\in\text{Def}_{L}(\omega)\cap\mathcal{W}$. This follows from the construction of the neighborhood $\mathcal{O}$ outlined above.

To see that the map \eqref{eq:main-map} is well-defined, assume that $(\omega',L'),(\omega'',L'')\in\mathcal{D}_{\mathcal{U}'}(\omega,L)\cap\mathcal{O}$ are equivalent. First, since the map
\[
\overline{q}:\frac{\mathcal{D}_{\mathcal{U}'}(\omega,L)}{\sim}\rightarrow\frac{\text{Def}_{L}(\omega)}{\sim}:[(\eta,K)]\mapsto\big[\phi_{K}^{*}\eta\big]
\]
is well-defined by Lemma\,\ref{lem:well-defined}, it follows that $\phi_{L'}^{*}\omega',\phi_{L''}^{*}\omega''\in\text{Def}_{L}(\omega)\cap\mathcal{W}$ are equivalent. Next, since the map
\[
\frac{\text{Def}_{L}(\omega)\cap\mathcal{W}}{\sim}\rightarrow H^{2}(M,L):[\eta]\mapsto[\omega-\eta]
\]
is well-defined by Prop.\,\ref{prop:H2}, the classes $[\omega-\phi_{L'}^{*}\omega']$ and $[\omega-\phi_{L''}^{*}\omega'']$ in $H^{2}(M,L)$ agree. 

To see that the map \eqref{eq:main-map} is injective, assume that $(\omega',L'),(\omega'',L'')\in\mathcal{D}_{\mathcal{U}'}(\omega,L)\cap\mathcal{O}$ are such that $[\omega-\phi_{L'}^{*}\omega']=[\omega-\phi_{L''}^{*}\omega'']$ in $H^{2}(M,L)$. First, because of injectivity in Prop.\,\ref{prop:H2}, we get that $\phi_{L'}^{*}\omega',\phi_{L''}^{*}\omega''\in\text{Def}_{L}(\omega)\cap\mathcal{W}$ are equivalent. Next, because of injectivity in Prop.\,\ref{prop:bijection}, it follows that $(\omega',L'),(\omega'',L'')\in\mathcal{D}_{\mathcal{U}'}(\omega,L)\cap\mathcal{O}$ are equivalent. 

We now check that the map \eqref{eq:main-map} surjects onto an open neighborhood of zero in $H^{2}(M,L)$. Since the class $[(\omega,L)]$ is mapped to zero, we only need to show that the image of the map \eqref{eq:main-map} is open. We will argue as in the proof of Prop.\,\ref{prop:H2}. Assume that $[\beta]\in H^{2}(M,L)$ is the image of $[(\omega',L')]$ for some $(\omega',L')\in\mathcal{D}_{\mathcal{U}'}(\omega,L)\cap\mathcal{O}$. Pick closed forms $\eta_1,\ldots,\eta_k\in\Omega^{2}(M,L)$ whose cohomology classes $[\eta_1],\ldots,[\eta_k]$ form a basis of $H^{2}(M,L)$. Because the map
\[
\big(\Omega^{2}(M),\mathcal{C}^{0}\big)\rightarrow\big(\Omega^{2}(M),\mathcal{C}^{0}\big):\alpha\mapsto (\phi_{L'}^{-1})^{*}\alpha
\]
is continuous, there exists $\epsilon>0$ such that for all $\delta\in\mathbb{R}^{k}$ with $\|\delta\|<\epsilon$, we have that
\[
\left(\omega'-(\phi_{L'}^{-1})^{*}\Big(\sum_{i=1}^{k}\delta_i\eta_i\Big),L'\right)\in\mathcal{D}_{\mathcal{U}'}(\omega,L)\cap\mathcal{O}.
\]
The image of its equivalence class in $\big(\mathcal{D}_{\mathcal{U}'}(\omega,L)\cap\mathcal{O}\big)/\sim$ under the map \eqref{eq:main-map} is 
\[
\left[\omega-\phi_{L'}^{*}\omega'+\sum_{i=1}^{k}\delta_i\eta_i\right]=[\beta]+\sum_{i=1}^{k}\delta_i[\eta_i].
\]
Therefore the open ball $B_{[\beta],\epsilon}\subset H^{2}(M,L)$  is contained in the image of the map \eqref{eq:main-map}. This shows that the image of the map \eqref{eq:main-map} is an open neighborhood of the origin in $H^{2}(M,L)$.
\end{proof}

	\section{Relation with other deformation problems}\label{sec:relation}
	The problem studied in this note concerns simultaneous deformations of a symplectic form $\omega\in\Omega^{2}(M)$ and a Lagrangian submanifold $L\subset(M,\omega)$. One could also deform both structures individually, i.e. study Lagrangian deformations of $L$ for fixed $\omega$ or study symplectic deformations of $\omega$. The aim of this section is to relate these deformation problems.

	\subsection{Deformations of Lagrangian submanifolds}\label{subsec:lagr}
	Let $(M,\omega)$ be a symplectic manifold and $L\subset(M,\omega)$ a compact Lagrangian submanifold. We recall the well-known classification of Lagrangian deformations $L'$ of $L$, up to equivalence by Hamiltonian isotopies.
	
	As in \S\,\ref{sec:one}, we will restrict to Lagrangian submanifolds $L'\subset(M,\omega)$ lying in a chart for the non-linear Grassmannian $\text{Gr}_{L}(M)$ around $L$. By Weinstein's Lagrangian neighborhood theorem \cite{Weinstein}, we can fix a symplectomorphism between a neighborhood $V$ of $L$ in $(T^{*}L,\omega_{can})$ and a neighborhood $U$ of $L$ in $(M,\omega)$, which restricts to the identity on $L$. We denote it by
	\[
	\psi:(V,\omega_{can})\rightarrow (U,\omega).
	\]
	Let us also fix smaller neighborhoods $V'$ and $U'$ of $L$ which correspond under $\psi$ and satisfy $L\subset V'\subset\overline{V'}\subset V$ and $L\subset U'\subset\overline{U'}\subset U$. We also make sure that $V'$ is fiberwise convex. We define
	\[
	\Gamma_{V'}(T^{*}L)=\{\sigma\in\Gamma(T^{*}L):\ \sigma(L)\subset V'\}
	\]
	and set $\mathcal{U}'\subset \text{Gr}_{L}(M)$ to be the neighborhood of $L$ consisting of submanifolds that are images of $\psi\circ\sigma:L\rightarrow M$ for $\sigma\in\Gamma_{V'}(T^{*}L)$. We restrict to Lagrangian submanifolds in $\mathcal{U}'$.
	
	\begin{defi}
		\begin{enumerate}[i)]
			\item We introduce the space
			\[
			\mathcal{D}_{\mathcal{U}'}(L):=\left\{L'\in\mathcal{U}': L'\subset(M,\omega)\ \text{is Lagrangian}\right\}.
			\]	
			\item For $L',L''\in\mathcal{D}_{\mathcal{U}'}(L)$, we say that $L'\sim L''$ if there exists a Hamiltonian isotopy $(\phi_t)_{t\in[0,1]}$ of $(M,\omega)$ such that 
			\[
			\phi_1(L')=L''\hspace{0.5cm}\text{and}\hspace{0.5cm}\phi_t(L')\in\mathcal{U}'\ \text{for all}\ t\in[0,1].
			\]
		\end{enumerate}
	\end{defi}
	
	A Lagrangian submanifold $L'\in\mathcal{D}_{\mathcal{U}'}(L)$ corresponds to a unique section $\alpha'\in\Gamma_{V'}(T^{*}L)$ whose graph is Lagrangian with respect to $\omega_{can}$. It is well-known that the latter condition is equivalent with $\alpha'\in\Omega^{1}(L)$ being closed \cite[Prop.\,3.4.2]{McDuff}. So we get an assignment
	\[
	\mathcal{D}_{\mathcal{U}'}(L)\rightarrow H^{1}(L):L'\mapsto[\alpha'].
	\]
	It was proved in \cite[Chapter~5, Prop.\,2.7]{Schaetz} that this map descends to a bijection between the moduli space $\mathcal{D}_{\mathcal{U}'}(L)/\sim$ and an open neighborhood of the origin in $H^{1}(L)$.

	\subsection{Deformations of symplectic forms}\label{sub:sympl}
	Let $M$ be compact and $\omega\in\Omega_{symp}(M)$ a symplectic form. We recall the well-known classification of deformations $\omega'\in\Omega_{symp}(M)$ of $\omega$, quotienting by the equivalence relation
	\begin{equation}
		\omega'\sim \omega''\Leftrightarrow \exists\phi\in\text{Diff}_{0}(M)\ \text{such that}\ \phi^{*}\omega''=\omega'.
	\end{equation}
	We restrict to a convex $\mathcal{C}^{0}$-open $\mathcal{D}(\omega)\subset\Omega^{2}_{cl}(M)$ around $\omega$ such that $\mathcal{D}(\omega)\subset\Omega_{symp}(M)$, see Lemma\,\ref{lem:nbhd}. It follows from Moser's theorem \cite[Thm.\,3.2.4]{McDuff} that the map
	\[
	\mathcal{D}(\omega)\rightarrow H^{2}(M):\omega'\mapsto[\omega-\omega']
	\]
	induces a bijection between $\mathcal{D}(\omega)/\sim$ and an open neighborhood of the origin in $H^{2}(M)$.

	%Take a convex $\mathcal{C}^{0}$-open $\widetilde{N}\subset\Omega^{2}_{cl}(M)$ around $\omega$ such that $\widetilde{N}\subset\Omega_{symp}(M)$.
	%Fix closed $\eta_1,\ldots,\eta_k\in\Omega^{2}(M)$ such that $[\eta_1],\ldots,[\eta_k]$ is a basis of $H^{2}(M)$. There exists $\epsilon>0$ such that for all $\delta\in\mathbb{R}^{k}$ with $\|\delta\|<\epsilon$, we have that
	%\[
	%\omega+\sum_{i=1}^{k}\delta_i\eta_i\in\widetilde{N}.
	%\]
	%Note further that the map
	%$$
	%I:\big(\Omega^{2}_{cl}(M),\mathcal{C}^{0}\big)\rightarrow H^{2}(M):\alpha\mapsto[\alpha],
	%$$
	%which assigns to a closed two-form its cohomology class, is continuous.
	%Indeed, if $[\xi_1],\ldots,[\xi_k]$ is the basis of $H_{2}(M,\mathbb{R})$ dual to $[\eta_1],\ldots,[\eta_k]$, then the map $I$ is given by
	%$$
	%I(\alpha)= %\left(\int_{\xi_1}\alpha\right)[\eta_1]+\cdots+\left(\int_{\xi_k}\alpha\right)[\eta_k].
	%$$
	%This implies continuity, because integration is $\mathcal{C}^{0}$-continuous. It follows that the set
	%\[
	%\mathcal{N}:=\widetilde{N}\cap I^{-1}\big(B_{[\omega],\epsilon}\big)
	%\]
	%is a $\mathcal{C}^{0}$-open around $\omega$ in $\Omega_{symp}(M)$. We now describe the moduli space $\mathcal{N}/\sim$.
	
	%\begin{prop}
	%The map
	%\[
	%\mathcal{N}\rightarrow H^{2}(M):\omega'\mapsto[\omega']
	%\]
	%induces an isomorphism between $\mathcal{N}/\sim$ and an open neighborhood of $[\omega]$ in $H^{2}(M)$.
	%\end{prop}
	%\begin{proof}
	%It is  clear that the map descends to the quotient $\mathcal{N}/\sim$, because diffeomorphisms isotopic to the identity induce the identity map in $H^{2}(M)$. To see that the image is open, assume that 
	%\end{proof}

	\subsection{Relating the moduli spaces}
	Let $(M,\omega)$ be a compact symplectic manifold with a compact Lagrangian submanifold $L\subset(M,\omega)$. 
	\begin{itemize}
		\item In \S\ref{subsec:lagr}, we described $\mathcal{C}^1$-small deformations $L'$ of $L$ up to Hamiltonian isotopy. We denote the resulting moduli space by $\mathcal{M}_{L}$. It is parametrized by
		\begin{equation}\label{eq:Id1}
			\mathcal{M}_{L}\rightarrow H^1(L):[L']\mapsto[\alpha'].
		\end{equation}
		\item In \S\ref{subsec:main}, we described $(\mathcal{C}^{0}\times\mathcal{C}^{1})$-small deformations $(\omega',L')$ of the pair $(\omega,L)$ up to isotopies. We denote the resulting moduli space by $\mathcal{M}_{(\omega,L)}$. It is parametrized by
		\begin{equation}\label{eq:Id2}
			\mathcal{M}_{(\omega,L)}\rightarrow H^{2}(M,L):[(\omega',L')]\mapsto[\omega-\phi_{L'}^{*}\omega'].
		\end{equation}
		\item In \S\ref{sub:sympl}, we described $\mathcal{C}^{0}$-small deformations $\omega'$ of $\omega$ up to isotopies. We denote the resulting moduli space by $\mathcal{M}_{\omega}$. It is parametrized by
		\begin{equation}\label{eq:Id3}
			\mathcal{M}_{\omega}\rightarrow H^{2}(M):[\omega']\mapsto[\omega-\omega'].
		\end{equation}
	\end{itemize}
	These moduli spaces fit in a natural sequence
	\begin{equation}\label{eq:geom}
		\begin{tikzcd}[column sep=60pt]
			\mathcal{M}_{L}\arrow{r}{[L']\mapsto[(\omega,L')]}&\mathcal{M}_{(\omega,L)}\arrow{r}{[(\omega',L')]\mapsto [\omega']}&\mathcal{M}_{\omega}.
		\end{tikzcd}
	\end{equation}
	On the other hand, the cohomology groups $H^1(L),H^{2}(M,L)$ and $H^{2}(M)$ that model these moduli spaces fit in an exact sequence
	\begin{equation}\label{eq:alg}
		\begin{tikzcd}[column sep=60pt]
			H^{1}(L)\arrow{r}{[\alpha]\mapsto[d\widetilde{\alpha}]}&H^{2}(M,L)\arrow{r}{[\beta]\mapsto [\beta]}&H^{2}(M).
		\end{tikzcd}
	\end{equation}
	Here $\widetilde{\alpha}\in\Omega^{1}(M)$ is any one-form such that $\iota_{L}^{*}\widetilde{\alpha}=\alpha$. This sequence is part of the long exact sequence already mentioned in \eqref{eq:long-ex}. We claim that the ``geometric'' sequence \eqref{eq:geom} and the ``algebraic'' sequence \eqref{eq:alg} agree if we adopt the parametrizations in \eqref{eq:Id1}, \eqref{eq:Id2} and \eqref{eq:Id3}.
	
	\begin{prop}\label{prop:diag}
		The following diagram commutes
		\[
		\begin{tikzcd}[column sep=60pt, row sep=large]
			&\mathcal{M}_{L}\arrow{r}{[L']\mapsto[(\omega,L')]}\arrow{d}{[L']\mapsto[\alpha']}&\mathcal{M}_{(\omega,L)}\arrow{r}{[(\omega',L')]\mapsto[\omega']}\arrow{d}{[(\omega',L')]\mapsto\left[\omega-\phi_{L'}^{*}\omega'\right]}&\mathcal{M}_{\omega}\arrow{d}{[\omega']\mapsto[\omega-\omega']}.\\
			&H^{1}(L)\arrow{r}{[\alpha]\mapsto[d\widetilde{\alpha}]} &H^{2}(M,L)\arrow{r}{[\beta]\mapsto [\beta]} &H^{2}(M)
		\end{tikzcd}.
		\]
	\end{prop}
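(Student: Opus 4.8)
The plan is to verify the two squares of the diagram separately, since they rest on entirely different mechanisms. The right-hand square will be a formal consequence of homotopy invariance of de Rham cohomology, whereas the left-hand square requires an explicit identification of a primitive produced by the flow $\phi_{L'}$ with an extension of the closed one-form $\alpha'$. For the right-hand square, I would fix $[(\omega',L')]\in\mathcal{M}_{(\omega,L)}$. Chasing the diagram rightward then downward yields $[\omega-\omega']\in H^{2}(M)$, while chasing downward then rightward yields the image of $[\omega-\phi_{L'}^{*}\omega']\in H^{2}(M,L)$ under the natural map $H^{2}(M,L)\to H^{2}(M)$, which is simply $[\omega-\phi_{L'}^{*}\omega']$ regarded in $H^{2}(M)$. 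Thus commutativity here is equivalent to $[\omega']=[\phi_{L'}^{*}\omega']$ in $H^{2}(M)$. Since $\phi_{L'}\in\mathrm{Diff}_{0}(M)$ is isotopic to the identity by Definition~\ref{def:phi}, its pullback acts trivially on $H^{\bullet}(M)$, and the claim is immediate.

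For the left-hand square I would fix $L'\in\mathcal{D}_{\mathcal{U}'}(L)$ with corresponding closed one-form $\alpha'\in\Gamma_{V'}(T^{*}L)$. Going downward then rightward produces $[d\widetilde{\alpha'}]\in H^{2}(M,L)$ for any extension $\widetilde{\alpha'}\in\Omega^{1}(M)$ of $\alpha'$, while going rightward then downward produces $[\omega-\phi_{L'}^{*}\omega]$ (the deformed form being $\omega$ itself). Writing $\phi_{L'}=\phi^{1}_{X_{L'}}$ and applying Cartan's formula along the flow gives the global primitive
\[
\omega-\phi_{L'}^{*}\omega=d\gamma,\qquad \gamma:=-\int_{0}^{1}(\phi^{t}_{X_{L'}})^{*}(\iota_{X_{L'}}\omega)\,dt\in\Omega^{1}(M).
\]
Since both $\omega$ and $\phi_{L'}^{*}\omega$ restrict to zero on $L$ (the latter because $\phi_{L'}(L)=L'$ is Lagrangian for $\omega$, so $L$ is Lagrangian for $\phi_{L'}^{*}\omega$), the form $d\gamma$ lies in $\Omega^{2}(M,L)$. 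The entire square then reduces to the single identity $\iota_{L}^{*}\gamma=\alpha'$: indeed, this forces $\gamma-\widetilde{\alpha'}\in\Omega^{1}(M,L)$, whence $[d\gamma]=[d\widetilde{\alpha'}]$ in $H^{2}(M,L)$.

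To establish $\iota_{L}^{*}\gamma=\alpha'$, I would transport the computation to the cotangent model via $\psi$, which restricts to the identity on $L$. The decisive geometric input is that the trajectories of $X_{L'}=f\psi_{*}(X_{\alpha'})$ issuing from $L$ remain inside $V'$, where $f\equiv 1$ by the choice of cutoff and where $V'$ is fiberwise convex. There the field agrees with the honest fiberwise-constant, vertical field $X_{\alpha'}$, whose flow is fiberwise translation; hence $\phi^{t}_{X_{L'}}\circ\iota_{L}$ equals $\psi$ precomposed with the section $t\alpha'$ for every $t\in[0,1]$. Using $\iota_{X_{\alpha'}}\omega_{can}=-p^{*}\alpha'$ from Remark~\ref{rem:corr} together with $p\circ(t\alpha')=\mathrm{id}_{L}$, each integrand pulls back to $-\alpha'$, so that
\[
\iota_{L}^{*}\gamma=-\int_{0}^{1}(\phi^{t}_{X_{L'}}\circ\iota_{L})^{*}(\iota_{X_{L'}}\omega)\,dt=-\int_{0}^{1}(-\alpha')\,dt=\alpha'.
\]

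The main obstacle is precisely this last step. Unlike the infinitesimal argument used earlier to show that $d\overline{q}$ realizes $J$, here one genuinely needs the finite-time flow of $X_{L'}$, so the reasoning depends essentially on controlling the cutoff $f$ and on the fiberwise convexity of $V'$ to guarantee that the relevant trajectories never leave the region where $X_{L'}$ coincides with $X_{\alpha'}$. Once this geometric input is secured, both squares follow from the bookkeeping above.
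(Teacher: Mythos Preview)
Your proposal is correct and follows essentially the same approach as the paper: both handle the right square via homotopy invariance of de Rham cohomology (since $\phi_{L'}\in\mathrm{Diff}_0(M)$), and both reduce the left square to showing that the flow primitive $\int_0^1(\phi^t_{X_{L'}})^*(\iota_{X_{L'}}\omega)\,dt$ restricts to $-\alpha'$ on $L$, using $\iota_{X_{\alpha'}}\omega_{can}=-p^*\alpha'$ and verticality of the flow. Your write-up is actually a bit more explicit about why $f\equiv 1$ along the relevant trajectories (via the fiberwise convexity of $V'$), a point the paper uses but does not spell out.
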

	\begin{proof}
		To see that the second square commutes, we just have to remark that $[\phi_{L'}^{*}\omega']=[\omega']$ in $H^{2}(M)$ because $\phi_{L'}\in\text{Diff}_{0}(M)$ induces the identity in cohomology. To show that the first square commutes, let us denote by $(\phi_t)_{t\in[0,1]}$ the flow of the vector field
		\[
		X_{L'}=f\psi_{*}(X_{\alpha'}),
		\]
		so that $\phi_{L'}=\phi_1$. We then have
		\begin{align*}
			\omega-\phi_{L'}^{*}\omega&=-\int_{0}^{1}\left(\frac{d}{dt}\phi_{t}^{*}\omega\right)dt=-d\left(\int_{0}^{1}\phi_t^{*}\big(\iota_{X_{L'}}\omega\big)dt\right),
		\end{align*}
		so the first square commutes as soon as we show that
		\begin{equation}\label{eq:to-show}
			\iota_{L}^{*}\left(\int_{0}^{1}\phi_t^{*}\big(\iota_{X_{L'}}\omega\big)dt\right)=-\alpha'.
		\end{equation}
		To prove this equality, take $x\in L$ and $v\in T_{x}L$ and compute
		\begin{align}\label{eq:simplify}
			&\int_{0}^{1}\omega_{\phi_t(x)}\big(X_{L'}(\phi_t(x)),(d_{x}\phi_t)(v)\big)dt\nonumber\\
			&\hspace{1.5cm}=\int_{0}^{1}f(\phi_t(x))\omega_{\phi_t(x)}\big((\psi_{*}X_{\alpha'})(\phi_t(x)),(d_{x}\phi_t)(v)\big)dt\nonumber\\
			&\hspace{1.5cm}=\int_{0}^{1}\omega_{\phi_t(x)}\big((\psi_{*}X_{\alpha'})(\phi_t(x)),(d_{x}\phi_t)(v)\big)dt\nonumber\\
			&\hspace{1.5cm}=\int_{0}^{1}\big(\omega_{can}\big)_{\psi^{-1}(\phi_t(x))}\big(X_{\alpha'}(\psi^{-1}(\phi_t(x))),(d_{\phi_t(x)}\psi^{-1})(d_{x}\phi_t(v))\big)dt,
		\end{align}
		where we used that $f\equiv 1$ along the flow line $(\phi_t(x))_{t\in[0,1]}$ and that $\psi^{*}\omega=\omega_{can}$. We now simplify the formula \eqref{eq:simplify}. Recall from Rem.\,\ref{rem:corr} that, if $p:T^{*}L\rightarrow L$ is the projection, then
		\begin{equation*}
			\iota_{X_{\alpha'}}\omega_{can}=-p^{*}\alpha'.
		\end{equation*}
		Inserting this equality into \eqref{eq:simplify}, we obtain
		\begin{equation}\label{eq:int}
			-\int_{0}^{1}(p^{*}\alpha')_{\psi^{-1}(\phi_t(x))}\left(d_{x}(\psi^{-1}\circ\phi_t)(v)\right)dt.
		\end{equation}
		By functoriality, $(\psi^{-1}\circ\phi_t\circ\psi)_{t\in[0,1]}$ is the flow of the vector field $\psi^{*}(f)X_{\alpha'}$ which is vertical. So we have that $p\circ \psi^{-1}\circ\phi_t=p\circ\psi^{-1}$, hence the expression \eqref{eq:int} reduces to 
		\[
		-\int_{0}^{1}\alpha'_{p(\psi^{-1}(x))}\left(d_{x}(p\circ\psi^{-1})(v)\right)dt=-\int_{0}^{1}\alpha'_{x}(v)dt=-\alpha'_{x}(v).
		\]
		Here we used that $\psi|_{L}=\text{Id}$. So the equality \eqref{eq:to-show} holds, hence the proof is finished.
	\end{proof}

	\section{Relation with formality of Koszul brackets}
	In this section, we remark that Thm.\,\ref{thm:main} is consistent with a formality result for Koszul brackets due to Fiorenza-Manetti \cite{Fiorenza}, in the following sense. Recall from Prop.\,\ref{prop:bijection} that we have a bijection of moduli spaces
	\[
	\overline{q}:\frac{\mathcal{D}_{\mathcal{U}'}(\omega,L)}{\sim}\rightarrow\frac{\text{Def}_{L}(\omega)}{\sim}:[(\omega',L')]\mapsto\big[\phi_{L'}^{*}\omega'\big].
	\]
	We will point out that the moduli space $\text{Def}_{L}(\omega)/\sim$ is in bijection with the moduli space of Maurer-Cartan elements of a dgL[1]a-structure on $\Omega^{\bullet}(M,L)[2]$. This dgL[1]a was shown to be homotopy abelian in \cite{Fiorenza}, indicating once more that the moduli space of deformations of the pair $(\omega,L)$ should be given by $H^{2}(M,L)$. We now spell this out in more detail.

\vspace{0.2cm}

	The most straightforward way of parametrizing small deformations of $\omega$ in $\text{Def}_{L}(\omega)$ would be through small closed elements of $\Omega^{2}(M,L)$. This leads to the local description of the moduli space $\text{Def}_{L}(\omega)/\sim$ established in Prop.\,\ref{prop:H2}.  In this section, we will look at a different approach. It turns out that the space $\text{Def}_{L}(\omega)$ can be parametrized alternatively via Maurer-Cartan elements of a dgL[1]a-structure on $\Omega^{\bullet}(M,L)[2]$ inherited from the Koszul dgL[1]a.

	\begin{defi}
		Let $(M,\omega)$ be a symplectic manifold with Poisson structure $\pi:=-\omega^{-1}$. We get an $L_{\infty}[1]$-algebra $\big(\Omega^{\bullet}(M)[2],\lambda_1,\lambda_2\big)$ whose multibrackets are defined as follows:
		\[
		\begin{cases}
			\lambda_1(\alpha)=d\alpha\\
			\lambda_2(\alpha,\beta)=(-1)^{|\alpha|}[\alpha,\beta]_{\pi}
		\end{cases}.
		\]
		Here $[-,-]_{\pi}$ is the Koszul bracket associated with $\pi$, see for instance \cite{Fiorenza},\cite{Koszul}. 
	\end{defi}

	We refer to $\big(\Omega^{\bullet}(M)[2],\lambda_1,\lambda_2\big)$ as the Koszul dgL[1]a associated with $\pi$. It was shown in \cite[Lemma\,2.12]{Koszul} that deformations of the symplectic form $\omega$ can be parametrized by small Maurer-Cartan elements of $\big(\Omega^{\bullet}(M)[2],\lambda_1,\lambda_2\big)$. In more detail, let $I_{\pi}$ be the neighborhood of $M\subset\wedge^{2}T^{*}M$ consisting of those bilinear forms $\beta$ such that 
	\[
	\text{Id}+\pi^{\sharp}\circ\beta^{\flat}:TM\rightarrow TM
	\]
	is invertible. For $\beta\in I_{\pi}$, we can define a two-form $F(\beta)$ determined by
	\begin{equation}\label{eq:F}
		F(\beta)^{\flat}=\beta^{\flat}\circ(\text{Id}+\pi^{\sharp}\circ\beta^{\flat})^{-1}.
	\end{equation}

	\begin{lemma}\label{lem:parametrization}
		There is a bijection between:
		\begin{enumerate}[i)]
			\item Small Maurer-Cartan elements $\beta$ of the Koszul dgL[1]a $\big(\Omega^{\bullet}(M)[2],\lambda_1,\lambda_2\big)$, i.e. 
			\[
			\beta\in I_{\pi}\ \ \text{such that}\ \ d\beta+\frac{1}{2}[\beta,\beta]_{\pi}=0,
			\]
			\item Deformations of the symplectic form $\omega$.
		\end{enumerate}
		The bijection assigns to $\beta\in I_{\pi}\cap MC\big(\Omega^{\bullet}(M)[2],\lambda_1,\lambda_2\big)$ the symplectic form $\omega+F(\beta)$.
	\end{lemma}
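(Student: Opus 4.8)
The plan is to pass to Poisson-geometric language, using that $\pi=-\omega^{-1}$ is non-degenerate so that $\pi^{\sharp}\colon T^{*}M\to TM$ is an isomorphism with $\pi^{\sharp}\circ\omega^{\flat}=-\mathrm{Id}=\omega^{\flat}\circ\pi^{\sharp}$. I will realize the assignment $\beta\mapsto\omega+F(\beta)$ as a composite of three bijections: the linear isomorphism $\beta\mapsto\mu:=-(\wedge^{2}\pi^{\sharp})\beta$ from two-forms to bivectors; the affine bijection $\mu\mapsto\pi+\mu$ onto bivectors near $\pi$; and the bijection $\tilde{\pi}\mapsto-\tilde{\pi}^{-1}$ from non-degenerate bivectors near $\pi$ to non-degenerate two-forms near $\omega$. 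Two things then need checking: that this composite really is $\beta\mapsto\omega+F(\beta)$, and that the integrability conditions match up at the two ends, namely closedness of $\omega+F(\beta)$ versus the Maurer--Cartan equation for $\beta$.

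For the first point I would argue with the bundle maps alone. Setting $A:=\pi^{\sharp}\circ\beta^{\flat}\colon TM\to TM$, the defining condition $\beta\in I_{\pi}$ is exactly invertibility of $\mathrm{Id}+A$, and a short computation from $F(\beta)^{\flat}=\beta^{\flat}\circ(\mathrm{Id}+A)^{-1}$ together with $\pi^{\sharp}\circ\omega^{\flat}=-\mathrm{Id}$ yields
\[
\big(\omega+F(\beta)\big)^{\flat}=\omega^{\flat}\circ(\mathrm{Id}+A)^{-1}.
\]
Hence $\omega+F(\beta)$ is automatically non-degenerate for every $\beta\in I_{\pi}$, and inverting this identity (using $(\omega^{\flat})^{-1}=-\pi^{\sharp}$) gives $-\big(\omega+F(\beta)\big)^{-1}=\pi-(\wedge^{2}\pi^{\sharp})\beta=\pi+\mu$. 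This shows that the composite above agrees with $\beta\mapsto\omega+F(\beta)$, and it simultaneously establishes bijectivity: each of the three maps is invertible near the relevant basepoint, and reversing them provides an explicit inverse $\omega'\mapsto\beta$.

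It remains to match integrability. Being non-degenerate, $\omega+F(\beta)$ is symplectic (i.e.\ closed) if and only if its dual bivector $\pi+\mu$ is Poisson; this is the classical equivalence between non-degenerate closed two-forms and non-degenerate Poisson bivectors. Writing out $[\pi+\mu,\pi+\mu]_{\mathrm{SN}}=0$ and using $[\pi,\pi]_{\mathrm{SN}}=0$, the condition becomes the Maurer--Cartan equation $[\pi,\mu]_{\mathrm{SN}}+\tfrac12[\mu,\mu]_{\mathrm{SN}}=0$ in the Schouten dgLa. The decisive input is Koszul's theorem: since $\pi$ is non-degenerate, the graded-algebra isomorphism $\wedge^{\bullet}\pi^{\sharp}$ intertwines $d$ with $[\pi,-]_{\mathrm{SN}}$ and the Koszul bracket $[-,-]_{\pi}$ with $[-,-]_{\mathrm{SN}}$, hence is an isomorphism of dgLas between the Koszul dgLa and $\big(\mathfrak{X}^{\bullet}(M)[1],[\pi,-]_{\mathrm{SN}},[-,-]_{\mathrm{SN}}\big)$. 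Transporting the Maurer--Cartan equation for $\mu$ back along this isomorphism turns it into $d\beta+\tfrac12[\beta,\beta]_{\pi}=0$, which is exactly the Maurer--Cartan condition of the Koszul dgL[1]a.

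I expect the real work to be bookkeeping rather than ideas. The conceptual steps, the bundle-map computation and the closed/Poisson dictionary, are short; the delicate part is keeping signs and degrees consistent throughout, namely the shift $[2]$, the factor $(-1)^{|\alpha|}$ built into $\lambda_{2}$, the sign in $\pi^{\sharp}\circ\omega^{\flat}=-\mathrm{Id}$, and the precise sign with which $\wedge^{\bullet}\pi^{\sharp}$ intertwines $d$ and $[\pi,-]_{\mathrm{SN}}$. Fixing these conventions at the outset is what makes the Koszul Maurer--Cartan equation emerge in the stated form $d\beta+\tfrac12[\beta,\beta]_{\pi}=0$; alternatively, one may simply invoke \cite[Lemma\,2.12]{Koszul}, of which this is a restatement.
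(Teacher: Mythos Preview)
The paper does not give a proof of this lemma; it is stated as a direct citation of \cite[Lemma\,2.12]{Koszul}, and indeed your final sentence already notes that the statement is a restatement of that result. Your proposal supplies the argument behind the citation, and it is correct: the paper itself confirms the key identity $-(\omega+F(\beta))^{-1}=\pi-\wedge^{2}\pi^{\sharp}(\beta)$ a few lines later (again citing \cite[\S\,2]{Koszul}), and the remaining steps---the closed/Poisson dictionary for non-degenerate two-forms and Koszul's dgLa isomorphism $\wedge^{\bullet}\pi^{\sharp}$ transporting $(d,[-,-]_{\pi})$ to $([\pi,-]_{\mathrm{SN}},[-,-]_{\mathrm{SN}})$---are standard. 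So there is nothing to compare: you have simply unpacked what the paper leaves to the reference.
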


	The subspace $\Omega^{\bullet}(M,L)[2]\subset\Omega^{\bullet}(M)[2]$ inherits a dgL[1]a-structure $\big(\Omega^{\bullet}(M,L)[2],\lambda_1,\lambda_2\big)$, as shown in \cite[\S\,5]{Fiorenza}. This dgL[1]a yields an alternative parametrization for $\text{Def}_{L}(\omega)$, see \cite{ours}.

	\begin{lemma}\label{lem:mc}
		We get a bijection
		\[
		\left\{\beta\in I_{\pi}\cap\Omega^{2}(M,L)\ \ \text{such that}\ \ d\beta+\frac{1}{2}[\beta,\beta]_{\pi}=0\right\}\rightarrow\text{Def}_{L}(\omega):\beta\mapsto\omega+F(\beta).
		\]
	\end{lemma}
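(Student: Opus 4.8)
The plan is to derive the statement from the unrestricted parametrization of Lemma~\ref{lem:parametrization} by isolating the single pointwise condition that cuts out $\text{Def}_{L}(\omega)$ inside the space of all deformations of $\omega$. Indeed, Lemma~\ref{lem:parametrization} already gives a bijection $\beta\mapsto\omega+F(\beta)$ between Maurer--Cartan elements $\beta\in I_{\pi}$ and deformations of $\omega$, and the map in the present lemma is merely its restriction to those $\beta$ additionally satisfying $\iota_{L}^{*}\beta=0$. Thus injectivity is inherited for free, and the whole lemma reduces to the fiberwise equivalence
\[
\iota_{L}^{*}\beta=0\quad\Longleftrightarrow\quad L\ \text{is Lagrangian with respect to}\ \omega+F(\beta),
\]
which I claim holds for every $\beta\in I_{\pi}$, with no use of the Maurer--Cartan equation (it is purely pointwise linear algebra). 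Granting this equivalence, the forward direction gives well-definedness, since the image then lands in $\text{Def}_{L}(\omega)$, symplecticity of $\omega+F(\beta)$ being guaranteed by Lemma~\ref{lem:parametrization}; and the backward direction gives surjectivity, since any $\omega'\in\text{Def}_{L}(\omega)$ equals $\omega+F(\beta)$ for a unique Maurer--Cartan $\beta\in I_{\pi}$, which the equivalence forces to lie in $\Omega^{2}(M,L)$.

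\textbf{Key identity.} The crux is a clean reformulation of $F$. Writing $A:=\text{Id}+\pi^{\sharp}\circ\beta^{\flat}$, invertible precisely because $\beta\in I_{\pi}$, and $\omega':=\omega+F(\beta)$, I would first establish
\[
(\omega')^{\flat}=\omega^{\flat}\circ A^{-1}.
\]
This follows by computing $(\omega')^{\flat}\circ A=\omega^{\flat}\circ A+\beta^{\flat}$ and using $\omega^{\flat}\circ\pi^{\sharp}=-\text{Id}$ (a consequence of $\pi=-\omega^{-1}$) to get $\omega^{\flat}\circ A=\omega^{\flat}-\beta^{\flat}$, whence $(\omega')^{\flat}\circ A=\omega^{\flat}$. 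This trades the opaque definition $F(\beta)^{\flat}=\beta^{\flat}\circ A^{-1}$ for a transparent statement about the musical isomorphism of the deformed form.

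\textbf{Fiberwise linear algebra.} With the identity at hand, fix $x\in L$ and set $\Lambda:=T_{x}L$. Since $\Lambda$ is Lagrangian for $\omega$, we have $\omega^{\flat}(\Lambda)=\text{Ann}(\Lambda)$, and therefore $\pi^{\sharp}(\text{Ann}(\Lambda))=(\pi^{\sharp}\circ\omega^{\flat})(\Lambda)=\Lambda$. Now $L$ is Lagrangian for $\omega'$ iff $(\omega')^{\flat}(\Lambda)\subseteq\text{Ann}(\Lambda)$; inserting $(\omega')^{\flat}=\omega^{\flat}\circ A^{-1}$ and using that $\omega^{\flat}$ is an isomorphism carrying $\Lambda$ onto $\text{Ann}(\Lambda)$, this is equivalent to $A^{-1}(\Lambda)\subseteq\Lambda$, hence, by invertibility of $A$ and a dimension count, to $A(\Lambda)=\Lambda$, i.e.\ $(\pi^{\sharp}\circ\beta^{\flat})(\Lambda)\subseteq\Lambda$. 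Applying $\pi^{\sharp}(\text{Ann}(\Lambda))=\Lambda$ once more, the last condition holds iff $\beta^{\flat}(\Lambda)\subseteq\text{Ann}(\Lambda)$, which is exactly $\iota_{L}^{*}\beta=0$ at $x$. Ranging over all $x\in L$ yields the fiberwise equivalence and completes the argument.

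\textbf{Main obstacle.} I expect the only genuine content to be the identity $(\omega')^{\flat}=\omega^{\flat}\circ A^{-1}$; once it is in place, the equivalence is a transparent chain of subspace inclusions resting on the single geometric input $\pi^{\sharp}(\text{Ann}(\Lambda))=\Lambda$. The one piece of bookkeeping to watch is surjectivity: one must check that an arbitrary $\omega'\in\text{Def}_{L}(\omega)$ is actually reached by the parametrization, which holds because $A=((\omega')^{\flat})^{-1}\circ\omega^{\flat}$ is fiberwise invertible, so the associated $\beta$ automatically lies in $I_{\pi}$.
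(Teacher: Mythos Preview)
The paper does not actually prove this lemma: it is stated with a reference to the companion paper \cite{ours} (in preparation), so there is no in-paper argument to compare against. Your proof is correct and self-contained. The reduction to Lemma~\ref{lem:parametrization} is exactly right, and the key identity $(\omega')^{\flat}=\omega^{\flat}\circ A^{-1}$ together with the pointwise equivalence $\iota_{L}^{*}\beta=0\Leftrightarrow A(T_xL)=T_xL\Leftrightarrow T_xL$ is $\omega'$-Lagrangian is clean and complete; well-definedness, injectivity, and surjectivity follow as you describe. One small remark: in the surjectivity step you sketch why $\beta\in I_{\pi}$, but strictly speaking Lemma~\ref{lem:parametrization} already hands you the unique $\beta\in I_{\pi}\cap MC$ with $\omega'=\omega+F(\beta)$, so that check is redundant.
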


	The dgL[1]a $\big(\Omega^{\bullet}(M,L)[2],\lambda_1,\lambda_2\big)$ carries an algebraic notion of equivalence defined on its Maurer-Cartan set. Two Maurer-Cartan elements $\beta_0,\beta_1$ of $\big(\Omega^{\bullet}(M,L)[2],\lambda_1,\lambda_2\big)$ are called gauge equivalent if there is a one-parameter family of Maurer-Cartan elements $(\beta_t)_{t\in[0,1]}$ interpolating between $\beta_0$ and $\beta_1$, as well as a one-parameter family $(\alpha_t)_{t\in[0,1]}$ in $\Omega^{1}(M,L)$, such that 
	\[
	\frac{\partial}{\partial t}\beta_t=d\alpha_t-[\alpha_t,\beta_t]_{\pi}.
	\]
	Under the parametrization of $\text{Def}_{L}(\omega)$ by Maurer-Cartan elements of $\big(\Omega^{\bullet}(M,L)[2],\lambda_1,\lambda_2\big)$, the equivalence of elements in $\text{Def}_{L}(\omega)$ by isotopies fixing $L$ essentially agrees with the gauge equivalence of Maurer-Cartan elements. We again refer to \cite{ours} for a proof.

	\begin{prop}\label{prop:gauge}
		Let $\beta_0$ and $\beta_1$ be Maurer-Cartan elements of $\big(\Omega^{\bullet}(M,L)[2],\lambda_1,\lambda_2\big)$ lying in $I_{\pi}$, corresponding with $\omega+F(\beta_0)$ and $\omega+F(\beta_1)$ in $\text{Def}_{L}(\omega)$. The following are equivalent:
		\begin{enumerate}
			\item The Maurer-Cartan elements $\beta_0$ and $\beta_1$ are gauge equivalent through a family of Maurer-Cartan elements $(\beta_t)_{t\in[0,1]}$ lying in $I_{\pi}$.
			\item There is an isotopy $(\rho_t)_{t\in[0,1]}$ such that $\rho_{1}^{*}(\omega+F(\beta_1))=\omega+F(\beta_0)$ and $\rho_t(L)=L$ for all $t\in[0,1]$.
		\end{enumerate}
	\end{prop}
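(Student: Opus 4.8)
The plan is to reduce both implications to a single Moser-type argument, after transporting the gauge equation into a statement about symplectic forms through the nonlinear parametrization $\beta\mapsto\omega+F(\beta)$ of Lemma~\ref{lem:mc}. The bridge, which I would prove first and which is the technical heart of the proposition, is the following. Let $(\beta_t)_{t\in[0,1]}$ be a smooth path in $I_\pi\cap\Omega^{2}(M,L)$ of Maurer--Cartan elements, put $\omega_t:=\omega+F(\beta_t)$, and let $(\alpha_t)_{t\in[0,1]}$ be a smooth path in $\Omega^{1}(M,L)$. Let $Y_t\in\mathfrak{X}(M)$ be determined by $\iota_{Y_t}\omega=-\alpha_t$, equivalently $Y_t=\pi^{\sharp}\alpha_t$. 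Then
\[
\frac{\partial}{\partial t}\beta_t=d\alpha_t-[\alpha_t,\beta_t]_{\pi}
\qquad\Longleftrightarrow\qquad
\frac{\partial}{\partial t}\omega_t=-d\,\iota_{Y_t}\omega_t .
\]

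To prove this equivalence I would apply the graded isomorphism $\wedge^{\bullet}\pi^{\sharp}$ from $\big(\Omega^{\bullet}(M)[2],\lambda_1,\lambda_2\big)$ to the Poisson deformation complex $\big(\mathfrak{X}^{\bullet}(M)[2],[\pi,\cdot],[\cdot,\cdot]_{SN}\big)$ of multivector fields, which is a strict morphism of dgL[1]a's by the very definition of the Koszul bracket (see \cite{Koszul},\cite{Fiorenza}). Under this isomorphism the defining relation \eqref{eq:F} of $F$ becomes $\pi_t:=-\omega_t^{-1}=\pi-(\wedge^{2}\pi^{\sharp})\beta_t$, the parameter $\alpha_t$ is sent to the vector field $Y_t=\pi^{\sharp}\alpha_t$, and the left-hand gauge equation is converted into the Poisson gauge equation $\tfrac{\partial}{\partial t}\pi_t=-\pounds_{Y_t}\pi_t$. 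Differentiating $\pi_t=-\omega_t^{-1}$ then turns this into the Moser equation $\tfrac{\partial}{\partial t}\omega_t=-\pounds_{Y_t}\omega_t=-d\,\iota_{Y_t}\omega_t$, using that each $\omega_t$ is closed. I expect the sign bookkeeping in the morphism $\wedge^{\bullet}\pi^{\sharp}$ and in the derivative of $\beta\mapsto-\omega_\beta^{-1}$ to be the only delicate point; the underlying identities are exactly those of \cite{Koszul} and \cite{ours}.

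Granting this, the implication $(1)\Rightarrow(2)$ is short. Given the gauge family $(\beta_t)$ and parameters $(\alpha_t)$, I form $Y_t=\pi^{\sharp}\alpha_t$ and let $(\rho_t)$ be its flow. The equivalence above gives $\tfrac{d}{dt}\rho_t^{*}\omega_t=\rho_t^{*}\big(\pounds_{Y_t}\omega_t+\dot{\omega}_t\big)=0$, whence $\rho_1^{*}(\omega+F(\beta_1))=\omega+F(\beta_0)$. Moreover $\alpha_t\in\Omega^{1}(M,L)$ forces $Y_t$ to be tangent to $L$: for $x\in L$ and $v\in T_xL$ we have $\omega_x(Y_t(x),v)=-\alpha_t(v)=0$, so $Y_t(x)\in T_xL^{\perp_{\omega}}=T_xL$ because $L$ is Lagrangian in the fixed $(M,\omega)$. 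Hence $\rho_t(L)=L$ for all $t$, exactly as in the injectivity argument of Prop.~\ref{prop:H2}.

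For $(2)\Rightarrow(1)$, given an isotopy $(\rho_t)$ with $\rho_1^{*}(\omega+F(\beta_1))=\omega+F(\beta_0)$ and $\rho_t(L)=L$, I set $\omega_t:=(\rho_t^{-1})^{*}(\omega+F(\beta_0))$. This is a path of symplectic forms from $\omega+F(\beta_0)$ to $\omega+F(\beta_1)$ for which $L$ stays Lagrangian (since $\rho_t^{-1}(L)=L$), so by Lemma~\ref{lem:mc} it has the form $\omega_t=\omega+F(\beta_t)$ for a unique path $(\beta_t)$ in $I_\pi\cap\Omega^{2}(M,L)$ of Maurer--Cartan elements interpolating $\beta_0$ and $\beta_1$. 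Writing $Y_t$ for the generating vector field of $\rho_t$ and putting $\alpha_t:=-\iota_{Y_t}\omega$, one has $\pi^{\sharp}\alpha_t=Y_t$ and, as above, $\alpha_t\in\Omega^{1}(M,L)$ because $Y_t$ is tangent to $L$ and $L$ is Lagrangian in $(M,\omega)$; differentiating $\rho_t^{*}\omega_t\equiv\omega+F(\beta_0)$ gives $\dot{\omega}_t=-d\,\iota_{Y_t}\omega_t$, and the Key equivalence then delivers the gauge equation $\dot{\beta}_t=d\alpha_t-[\alpha_t,\beta_t]_{\pi}$. The main obstacle is thus concentrated entirely in the displayed equivalence, i.e. in matching the algebraic Koszul gauge equation with the geometric Moser equation across the nonlinear map $F$; a secondary point is to ensure the path $\omega_t$ in $(2)\Rightarrow(1)$ remains in the image of $I_\pi$ under $\beta\mapsto\omega+F(\beta)$ so that $(\beta_t)$ is well defined, which is guaranteed by the hypothesis that the families in $(1)$ and $(2)$ stay in $I_\pi$.
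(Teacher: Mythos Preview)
The paper does not actually prove this proposition: immediately before the statement it writes ``We again refer to \cite{ours} for a proof,'' deferring the argument to a companion paper in preparation. So there is no proof in the present text to compare your proposal against.

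That said, your approach is the natural one and appears correct. The displayed equivalence you isolate --- that the Koszul gauge equation $\dot\beta_t=d\alpha_t-[\alpha_t,\beta_t]_\pi$ transports under $\beta\mapsto\omega+F(\beta)$ to the Moser equation $\dot\omega_t=-d\,\iota_{Y_t}\omega_t$ with $Y_t=\pi^{\sharp}\alpha_t$ --- is precisely the content one expects \cite{ours} to supply, and your route (push to the Poisson side via the strict dgL[1]a-isomorphism $-\wedge^{\bullet}\pi^{\sharp}$, where the gauge equation becomes $\dot\pi_t=-\pounds_{Y_t}\pi_t$, then invert using $\pi_t=-\omega_t^{-1}$) is the standard argument in the spirit of \cite{Koszul}. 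The tangency checks in both directions are correct and mirror the Moser step in Prop.~\ref{prop:H2}.

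One small correction to your closing paragraph: in $(2)\Rightarrow(1)$ there is no hypothesis in item~(2) that any family stays in $I_\pi$, so you cannot appeal to that. But the worry is unfounded anyway. Lemma~\ref{lem:parametrization} asserts that $\beta\mapsto\omega+F(\beta)$ is a bijection from $I_\pi\cap MC$ onto \emph{all} symplectic forms on $M$; hence each symplectic $\omega_t:=(\rho_t^{-1})^{*}(\omega+F(\beta_0))$ automatically has a unique preimage $\beta_t\in I_\pi$, and Lemma~\ref{lem:mc} then places $\beta_t$ in $\Omega^{2}(M,L)$ because $L$ is Lagrangian for $\omega_t$. No extra assumption is required.
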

		
	Combining Prop.\,\ref{prop:bijection} with Lemma\,\ref{lem:mc} and Prop.\,\ref{prop:gauge}, we get a correspondence
	\begin{equation}\label{eq:correspondence}
		\frac{\mathcal{D}_{\mathcal{U}'}(\omega,L)}{\sim}\longrightarrow\frac{I_{\pi}\cap MC\big(\Omega^{\bullet}(M,L)[2],\lambda_1,\lambda_2\big)}{\sim_{gauge}}:[(\omega',L')]\mapsto\big[F^{-1}\big(\phi_{L'}^{*}\omega'-\omega\big)\big].
	\end{equation}

	\begin{remark}
		The correspondence  \eqref{eq:correspondence} is natural if one takes the point of view of Voronov's derived bracket construction \cite{Voronov}, which we now turn to describe. Assume that we are given a V-data, that is a quadruple $(\mathcal{L},\mathfrak{a},P,\Delta)$ where:
		\begin{itemize}
			\item $(\mathcal{L},[-,-])$ is a graded Lie algebra,
			\item $\mathfrak{a}$ is an abelian Lie subalgebra,
			\item $P:\mathcal{L}\rightarrow\mathfrak{a}$ is a linear projection whose kernel is a Lie subalgebra of $\mathcal{L}$,
			\item $\Delta\in\ker(P)$ is an element of degree $1$ such that $[\Delta,\Delta]=0$.
		\end{itemize}
		The derived bracket construction produces out of a V-data an $L_{\infty}[1]$-algebra structure on $\mathcal{L}[1]\oplus  \mathfrak{a}$ whose multibrackets are defined as follows. Below, we denote $D:=[\Delta,\bullet]:\mathcal{L}\rightarrow \mathcal{L}$.
		The differential is given by
		\[
		d(x[1],a)=\big(-D(x)[1],P(x+D(a))\big),
		\]
		the binary bracket satisfies
		\[
		\{x[1],y[1]\}=(-1)^{|x|}[x,y][1],
		\]
		and for $n\geq 1$ we have
		\begin{align*}
			\{x[1],a_1,\ldots,a_n\}&=P[\ldots[x,a_1],\ldots,a_n],\\
			\{a_1,\ldots,a_n\}&=P[\ldots[D(a_1),a_2],\ldots,a_n].
		\end{align*}
		Here $x,y\in \mathcal{L}$ and $a_1,\ldots,a_n\in\mathfrak{a}$. Up to permutation of the entries, all remaining multibrackets vanish.
		Restricting the multibrackets of $\mathcal{L}[1]\oplus  \mathfrak{a}$ to $\ker(P)[1]$ one obtains a dgL[1]a, and the inclusion 	$\ker(P)[1]\hookrightarrow \mathcal{L}[1]\oplus  \mathfrak{a}$ is a strict quasi-isomorphism of  $L_{\infty}[1]$-algebras. See the appendix of \cite{ours} for a proof of this fact, which is also hinted at in \cite[Section 4]{Voronov}.

		Now let $(M,\omega)$ be a symplectic manifold and $L\subset(M,\omega)$ a Lagrangian submanifold. Via the Lagrangian neighborhood theorem, we identify a neighborhood of $L$ in $M$ with a vector bundle $E:=T^{*}L$, where the latter is endowed with its canonical Poisson structure $\pi_{can}:=-\omega_{can}^{-1}$.
		We then obtain a V-data $(\mathcal{L},\mathfrak{a},P,\Delta)$ given by the following \cite[Lemma\,2.2]{Fregier}:  
		\begin{itemize}
			\item the graded Lie algebra $\mathcal{L}$ is the space of multivector fields $\mathfrak{X}^{\bullet}(E)[1]$ endowed with the Schouten-Nijenhuis bracket,
			\item the abelian Lie subalgebra $\mathfrak{a}$ is $\Gamma(\wedge^{\bullet}E)[1]$, which can be viewed as the space of vertical fiberwise constant multivector fields.
			\item the projection $P:\mathcal{L}\rightarrow\mathfrak{a}$ is obtained by restricting multivector fields to $L$ and then applying the projection $\wedge^{\bullet} TE|_{L}\rightarrow \wedge^{\bullet}E$ coming from the splitting $TE|_L=TL\oplus E$.
			\item the Maurer-Cartan element $\Delta\in\ker(P)$ is the Poisson structure $\pi_{can}$.
		\end{itemize}
		The derived bracket construction yields an $L_{\infty}[1]$-algebra structure on $\mathfrak{X}^{\bullet}(E)[2]\oplus\Gamma(\wedge^{\bullet}E)[1]$ which by \cite[Cor.\,2.5]{Fregier} governs the deformation problem of $(\omega_{can},L)$, at least heuristically.\footnote{The precise statement is the following. Rather than allowing all multivector fields on $E$, one should restrict to the ones that are fiberwise entire \cite{entire}. The $L_{\infty}[1]$-algebra structure on $\mathfrak{X}_{fe}^{\bullet}(E)[2]\oplus\Gamma(\wedge^{\bullet}E)[1]$ has the property that its Maurer-Cartan elements are pairs $(\pi,\phi)\in\mathfrak{X}_{fe}^{2}(E)\oplus\Gamma(E)$ such that $\pi_{can}+\pi$ is Poisson and $\text{Graph}(-\phi)$ is coisotropic with respect to $\pi_{can}+\pi$. Note that $\pi_{can}+\pi$ is the inverse of a symplectic form whenever $\pi$ is sufficiently small. Hence, small Maurer-Cartan elements of $\mathfrak{X}_{fe}^{\bullet}(E)[2]\oplus\Gamma(\wedge^{\bullet}E)[1]$ parametrize deformations of $(\omega_{can},L)$ for which the associated Poisson structure remains fiberwise entire.} 
		Restricting the multibrackets to $\ker(P)[1]$, we obtain a dgL[1]a with multibrackets
		\[
		\begin{cases}
			\mu_1(Q)=-[\pi_{can},Q]\\
			\mu_2(Q_1,Q_2)=-(-1)^{|Q_1|}[Q_1,Q_2]
		\end{cases},
		\]
		where degrees are taken in $\mathfrak{X}^{\bullet}(E)$. The dgL[1]a $\big(\ker(P)[1],\mu_1,\mu_2\big)$ is strictly isomorphic to the dgL[1]a $\big(\Omega^{\bullet}(E,L)[2],\lambda_1,\lambda_2\big)$ introduced above, via the map
		\[
		-\wedge^{\bullet}\pi_{can}^{\sharp}:\big(\Omega^{\bullet}(E,L)[2],\lambda_1,\lambda_2\big)\overset{\sim}{\longrightarrow}\big(\ker(P)[1],\mu_1,\mu_2\big).
		\]
		Altogether, we get a strict $L_{\infty}[1]$-quasi-isomorphism
		\[
		\big(\Omega^{\bullet}(E,L)[2],\lambda_1,\lambda_2\big)\longrightarrow \mathfrak{X}^{\bullet}(E)[2]\oplus\Gamma(\wedge^{\bullet}E)[1]:\beta\mapsto \big(-\wedge^{\bullet}\pi_{can}^{\sharp}(\beta),0\big).
		\]
		Heuristically, one expects this map to induce a bijection of equivalence classes of Maurer-Cartan elements. This is realized rigorously by the inverse of the correspondence in \eqref{eq:correspondence}. Indeed, the latter is given by
		\[
		\frac{I_{\pi}\cap MC\big(\Omega^{\bullet}(M,L)[2],\lambda_1,\lambda_2\big)}{\sim_{gauge}}\longrightarrow\frac{\mathcal{D}_{\mathcal{U}'}(\omega,L)}{\sim}:[\beta]\mapsto[(\omega+F(\beta),L)],
		\]
		and the Poisson structure corresponding with $\omega+F(\beta)$ is indeed $\pi-\wedge^{2}\pi^{\sharp}(\beta)$, see \cite[\S\,2]{Koszul}.
	\end{remark}
	
	%\vspace{2cm}
	%\color{blue}	 
	%The Maurer-Cartan elements of the associated 	$L_{\infty}[1]$-algebra   $\mathcal{L}[1]\oplus \mathfrak{a}$ are pairs consisting of $\tilde{\Pi}\in \mathcal{X}^{2}_{fe}(E)$ and
	%$\tilde{\phi}\in \Gamma(E)$ such that $\Pi+ \tilde{\Pi}$ is a Poisson structure for which $graph(-\tilde{\phi})$ is a Lagrangian submanifold \cite[Cor. 2.5, Thm. 3]{FZgeo}. Notice that $\Pi+ \tilde{\Pi}$ is the inverse of a symplectic form, whenever $\tilde{\Pi}$ is sufficiently small.
	%Recall that the small Maurer-Cartan elements of the latter correspond to pairs $( \omega', L')$ where  $\omega'$  is symplectic structure near $\omega$ and $L'$ a Lagrangian submanifold (w.r.t. $\omega'$) near $L$.
	%The  dgL[1]a $\ker(P)[1]$ is isomorphic to the
	%dgL[1]a $\big(\Omega^{\bullet}(M,L)[2],\lambda_1,\lambda_2\big)$
	%introduced above. Hence we obtain a  strict quasi-isomorphism of  $L_{\infty}[1]$-algebras from $\big(\Omega^{\bullet}(M,L)[2],\lambda_1,\lambda_2\big)$ to $\mathcal{L}[1]\oplus  \mathfrak{a}$. 
	%One this obtains a map of Maurer-Cartan elements [I have not checked if there is a sign missing here] $B\mapsto (\wedge^2 \Pi^{\sharp}B,0)$. Modulo convergence issues, one expects this to induce a bijection of equivalence classes of Maurer-Cartan elements.
	%This bijection is realized in a rigorous way by (the inverse of) the correspondence of equation \eqref{eq:correspondence}.
	%\end{remark}

	We can relate the moduli space of Maurer-Cartan elements appearing in \eqref{eq:correspondence} with the cohomology group $H^{2}(M,L)$ thanks to a result by Fiorenza-Manetti \cite[\S\,5]{Fiorenza} stating that the dgL[1]a $\big(\Omega^{\bullet}(M,L)[2],\lambda_1,\lambda_2\big)$ is homotopy abelian. In more detail, let $R_{\pi}$ be the coderivation of $S\big(\Omega^{\bullet}(M)[2]\big)$ extending the second Koszul bracket
	\[
	\mathcal{K}(\iota_{\pi})_{2}:S^{2}\big(\Omega^{\bullet}(M)[2]\big)\rightarrow\Omega^{\bullet}(M)[2]:\alpha\odot\beta\mapsto\iota_{\pi}(\alpha\wedge\beta)-\iota_{\pi}\alpha\wedge\beta-\alpha\wedge\iota_{\pi}\beta.
	\]
	Then we have an isomorphism of $L_{\infty}[1]$-algebras
	\[
	e^{R_{\pi}}:\big(\Omega^{\bullet}(M)[2],\lambda_1,\lambda_2\big)\rightarrow\big(\Omega^{\bullet}(M)[2],d\big)
	\]
	which induces an isomorphism between $L_{\infty}[1]$-subalgebras
	\[
	e^{R_{\pi}}:\big(\Omega^{\bullet}(M,L)[2],\lambda_1,\lambda_2\big)\rightarrow\big(\Omega^{\bullet}(M,L)[2],d\big).
	\]
	The induced map between the Maurer-Cartan sets is well-defined for small Maurer-Cartan elements and coincides with the map $F$ defined in \eqref{eq:F}, see \cite[Prop.\,4.10]{Koszul}. Heuristically, this map should give a bijection between moduli spaces of Maurer-Cartan elements. Hence, combined with the correspondence \eqref{eq:correspondence}, one should get a parametrization
	\[
	\frac{\mathcal{D}_{\mathcal{U}'}(\omega,L)}{\sim}\longrightarrow H^{2}(M,L):[(\omega',L')]\mapsto\big[\phi_{L'}^{*}\omega'-\omega\big].
	\]
	Thm.\,\ref{thm:main} makes sense of this in a more precise way -- upon restricting to small deformations $\mathcal{D}_{\mathcal{U}'}(\omega,L)\cap\mathcal{O}\subset\mathcal{D}_{\mathcal{U}'}(\omega,L)$, we get a bijection onto an open around the origin in $H^{2}(M,L)$.

	\begin{remark}
		There is of course no need for Koszul brackets if one wants to describe small deformations of $\omega$ in $\text{Def}_{L}(\omega)$, as those are simply parametrized by small closed elements of $\Omega^{2}(M,L)$. Koszul brackets are useful however in more complicated deformation problems, for instance to describe deformations of a presymplectic form \cite{Koszul} or deformations of a symplectic form for which a given coisotropic submanifold remains coisotropic \cite{ours}. 
	\end{remark}


\begin{thebibliography}{[10]}
		\bibitem{groupoids} C. C. Cardenas, J. N. Mestre and I. Struchiner, \textit{Deformations of symplectic groupoids}, Trans. Amer. Math. Soc., Published electronically, 2025. 
		\bibitem{Fiorenza} D. Fiorenza and M. Manetti, \textit{Formality of Koszul brackets and deformations of holomorphic Poisson manifolds}, Homol. Homotopy Appl. \textbf{14}(2), p. 63-75, 2012.
		\bibitem{Fregier} Y. Fr\'{e}gier and M. Zambon, \textit{Simultaneous deformations and Poisson geometry}, Compos. Math. \textbf{151}(9), p. 1763-1790, 2015.
		\bibitem{Grassmannian} F. Gay-Balmaz and C. Vizman, \textit{Principal bundles of embeddings and nonlinear Grassmannians}, Ann. Glob. Anal. Geom. \textbf{46}, p. 293–312, 2014.
		\bibitem{ours} S. Geudens, F. Sch\"{a}tz, A.G. Tortorella and M. Zambon, \textit{From coisotropic submanifolds to presymplectic structures: relating the deformation theories}, in preparation.
		\bibitem{Manetti} M. Manetti, \textit{Lie Methods in Deformation Theory}, Springer Monographs in Mathematics, Springer Nature
		Singapore Pte Ltd., 2022.
		\bibitem{Marcut} I. M\u{a}rcu\c{t}, \textit{Rigidity around Poisson submanifolds}, Acta Math. \textbf{213}, p. 137–198, 2014.
		\bibitem{log-symplectic} I. M\u{a}rcu\c{t} and B. Osorno Torres, \textit{Deformations of log-symplectic structures}, J. London Math. Soc. \textbf{90}(1), p. 197-212, 2014.
		\bibitem{McDuff} D. McDuff and D. Salamon, \textit{Introduction to Symplectic Topology}, Third Edition, Oxford Graduate Texts in Mathematics \textbf{27}, Oxford University Press, 2017.
		\bibitem{Miranda} E. Miranda, P. Monnier and N. T. Zung, \textit{Rigidity of Hamiltonian actions on Poisson manifolds}, Adv. Math. \textbf{229}, p.  1136–1179, 2012.
		\bibitem{Bellman} B.G. Pachpatte, \textit{Inequalities for Differential and Integral Equations}, Mathematics in Science and Engineering \textbf{197}, Academic Press, 1998.
		\bibitem{Schaetz} F. Sch\"{a}tz, \textit{Coisotropic submanifolds and the BFV-complex}, PhD Thesis, Universität Zürich, 2009. Available at \url{https://user.math.uzh.ch/cattaneo/theses.html}. 
		\bibitem{entire} F. Sch\"{a}tz and M. Zambon, \textit{Deformations of coisotropic submanifolds for fibrewise entire Poisson structures}, Lett. Math. Phys. \textbf{103}, p. 777–791, 2013.
		\bibitem{equivalences} F. Sch\"{a}tz and M. Zambon, \textit{Equivalences of coisotropic submanifolds}, J. Symplectic Geom. \textbf{15}(1), p. 107–149, 2017.
		\bibitem{Koszul} F. Sch\"{a}tz and M. Zambon, \textit{Deformations of pre-symplectic structures and the Koszul $L_{\infty}$-algebra}, Int. Math. Res. Not. \textbf{2020}(14), p. 4191–4237, 2020.
		\bibitem{Voronov} T. Voronov, \textit{Higher derived brackets for arbitrary derivations}, Trav. Math. \textbf{16}, University of Luxembourg, p. 163-186, 2005.
		\bibitem{Weinstein} A. Weinstein, \textit{Symplectic manifolds and their Lagrangian submanifolds}, Adv. Math. \textbf{6}, p. 329-346, 1971.
	\end{thebibliography}
\end{document}